\numberwithin{equation}{section}
\theoremstyle{plain}
\newtheorem{thm}{Theorem}[section]
\newtheorem{prop}[thm]{Proposition}
\theoremstyle{definition}
\theoremstyle{remark}
\newtheorem{rem}{Remark}[section]
   \newcommand{\R}{\mathbb{R}}
\newcommand{\N}{\mathbb{N}}
\newcommand{\F}{\mathfrak{F}}
\newcommand{\1}{\mathbf{1}}
\newcommand{\Prob}{\mathbb{P}}
\newcommand{\ra}{\rightarrow}
\newcommand{\ar}{\rightarrow \infty}
\newcommand{\dx}{\mathrm{d}}
\bmdefine\mub{\mu}
\bmdefine\etab{\eta}
\bmdefine\varthetab{\vartheta}
\bmdefine\varphib{\varphi}
\bmdefine\betab{\beta}
\bmdefine\lab{\lambda}
\bmdefine\sigmab{\sigma}
\bmdefine\varsigmab{\varsigma}
\bmdefine\taub{\tau}
\bmdefine\rhob{\rho}
\bmdefine\pib{\pi}
\bmdefine\varrhob{\varrho}
\bmdefine\gammab{\gamma}
\bmdefine\Gammab{\Gamma}
\begin{document}

\begin{frontmatter}
\title{Degenerate Competing Three-Particle Systems}
\runtitle{Degenerate Competing Three-Particle Systems}

\begin{aug}
\author[A]{\fnms{Tomoyuki} \snm{Ichiba}\ead[label=e1, mark]{ichiba@pstat.ucsb.edu}} 
\and
\author[B]{\fnms{Ioannis} \snm{Karatzas}\ead[label=e2]{ik1@columbia.edu}}
\address[A]{Department of Statistics and Applied Probability, South Hall, University of California, Santa Barbara, CA 93106, USA, \printead{e1}}

\address[B]{Department of Mathematics,  Columbia University, New York, NY 10027, USA, \printead{e2}}
\end{aug}

\begin{abstract}
We study systems of three interacting particles, in which drifts and variances are assigned by rank. These systems are degenerate: the variances corresponding to one   or two ranks can vanish, so the corresponding ranked motions become ballistic rather than diffusive. Depending on which ranks are allowed to ``go ballistic"   the  systems exhibit markedly different behavior,  which we study here  in some detail. Also studied are   stability properties  for the resulting planar process of gaps between successive ranks.
\end{abstract}

\begin{keyword}
\kwd{Competing particle systems}
\kwd{local times}
\kwd{reflected  planar Brownian motion}
\kwd{triple collisions}
\kwd{structure of filtrations}
\end{keyword}

\end{frontmatter}


\section{Introduction}
 \label{sec0}

Systems of three or more interacting particles  that assign local characteristics to individual motions by rank, rather than by index (``name"), have received considerable attention in recent years  under the rubric of ``competing particle systems"; see for instance \citep{MR2473654}, \citep{MR2807968}, \citep{MR3055258}, 
\citep{MR3327514}, 
\citep{MR3325099}, \citep{MR3687255}, 
\citep{MR3503022}, 
\citep{MR3449305}, 
\citep{MR3706753}, 
\citep{MR3773380}, 
\citep{MR3957396}, 
and the references cited there. A crucial common feature of    these  studies  is that   particles of all ranks are assigned some strictly positive  local variance. This nondegeneracy smooths out the transition probabilities of   particles. 

We study here, and to the best of our knowledge for the first time, systems of   such competing particles   which are allowed to degenerate, meaning that the variances assigned to  one or two ranks can vanish. This kind of degeneracy calls for an entirely new theory; we initiate such a theory    in the context of systems consisting of three particles. Even with this simplification,   the range of behavior these systems can exhibit is quite  rich. We illustrate just how rich, by studying in detail the construction and properties of three such systems --- respectively, in Sections \ref{sec1}, \ref{sec5},  and in an Appendix (Section \ref{skew-elastic}). 




The systems of Sections \ref{sec1} and \ref{skew-elastic}  assign  ballistic motions to the leader and laggard particles, and   diffusive motion to the    middle particle. The quadratic variations of both   leader and laggard   are zero, as are the cross-variations between any two   particles; the resulting diffusion matrix is thus of rank one. The intensity  of collisions between the middle particle, and the leader or the laggard, is measured by the growth of the local time for the respective gap in ranks. Skew-elastic colliding behavior  between the middle and   laggard particles is described by  collision local times in Section \ref{skew-elastic}. 

By contrast, the system of Section \ref{sec5}  assigns independent  diffusive motions to the leader and laggard particles,   and ballistic ranked motion to the middle particle.  The quadratic variation of the middle particle is zero, and so are the cross-variations between any two   particles; thus, the diffusion matrix  is now of rank two. These different kinds of behavior  are summarized in Table \ref{Table1}.

A salient feature   emerging from the analysis,  is that  two purely ballistic ranked motions can never  ``pinch"  a Brownian motion running between, and reflected off  from,  them  (Proposition \ref{notriple}); whereas two Brownian ranked motions {\it can} pinch a purely ballistic one running in their midst and reflected off from them, resulting in a massive    collision that involves all three particles (subsection \ref{sec: 6.2}). Using simple  excursion-theoretic ideas, we show in Theorem \ref{basic_result} how such a system can extricate itself from   such a  triple collision; but also that the solution to the  stochastic equations that describe its motion, which is demonstrably strong up until the first time a triple collision occurs, {\it ceases to be strong after that time}.  This last question on the structure of filtrations had been open for several years. We also show  that,  even when triple collisions do occur in the  systems studied here, they are ``soft": the associated  triple collision local time is identically equal to zero.

The analysis of these three-particle systems is   connected to that of planar, semimartingale reflecting Brownian motions (SRBMs) and their local times on the faces of the nonnegative orthant. The survey paper \citep{MR1381009} 
and the monograph \citep{MR3157450} 
are  excellent entry points  to this   subject and its applications, alongside the foundational papers 
\citep{MR606992}, 
\citep{MR792398}, 
\citep{MR912049}, \citep{MR877593}, 
\citep{MR921820}. 
In Sections \ref{sec1} and \ref{skew-elastic}  the planar process of gaps is a degenerate SRBM driven by a one-dimensional Brownian motion, while it is a non-degenerate SRBM driven by a two-dimensional Brownian motion    in Section \ref{sec5}. The  directions of reflection are the same in Sections \ref{sec1} and \ref{sec5}, but different in Section \ref{skew-elastic}  because of the skew-elastic collisions between the middle and   laggard particles. 

Under appropriate conditions on the SRBM, the   planar process of gaps between ranked particles has an invariant distribution. We exhibit this joint distribution explicitly in one instance (subsection \ref{sec44}) under the so-called ``skew-symmetry" condition, and offer a conjecture for it in another (Remark \ref{conj}). We show  that the former  is the product  (\ref{InvMeas}) of its exponential marginals, while the latter is determined by the distribution of the  sum of its marginals as in (\ref{sym1}) and is {\it not} of   product  form  (Remark \ref{rem: no product form}).  

The  three-particle systems studied in this paper reveal some of the rich probabilistic structures that  degenerate, three-dimensional continuous \textsc{Markov} processes can exhibit.  It will be very interesting to extend the analysis of the present paper to $n-$particle systems with $n \ge 4$. We expect certain of the    features studied here to hold also in higher dimensions, but leave  such extensions to future work.

\begin{table}
 \begin{center} 
\begin{tabular}{cccccl} \hline 
Section & Leader & Middle & Laggard & Double Collisions & Triple Collisions \\ \hline 
\ref{sec1} & Ballistic & Diffusive & Ballistic & Elastic & No triple collisions  \\
 \ref{sec5} & Diffusive & Ballistic & Diffusive & Elastic & Triple collisions \\
 \ref{skew-elastic} & Ballistic & Diffusive & Ballistic & Skewed &   (open question)  
  \\ \hline 
\end{tabular}
\caption{Different behavior  for   leader, middle and laggard particles in each of three particle systems.} \label{Table1}
\end{center} 
\end{table}

\section{
 Diffusion in the Middle, with Ballistic Hedges
}
 \label{sec1}

Given real numbers $\, \delta_1, \,\delta_2,\,\delta_3\,$  and $ \, x_1 > x_2 > x_3\,$, we   start by constructing   a   probability space $  (\Omega, \F, \Prob) $ endowed with a right-continuous filtration $\mathbb{F}= \big\{ \F (t) \big\}_{0 \le t < \infty} ,$ to which are adapted     three independent  Brownian motions $\, B_1 (\cdot), \,B_2 (\cdot), \,B_3 (\cdot)   ,$ and   three continuous  processes $\, X_1 (\cdot), \,X_2 (\cdot), \,X_3 (\cdot)  \, $   that   satisfy 
\begin{equation}
\label{1}
X_i (\cdot) \,=\, x_i + \sum_{k=1}^3\, \delta_k \int_0^{\, \cdot} \1_{ \{ X_i (t) = R^X_{k} (t)\} } \, \dx t +  \int_0^{\, \cdot} \1_{ \{ X_i (t) = R^X_{2} (t)\} } \, \dx B_i (t)\,,\quad i=1, 2, 3\,, 
\end{equation}
\begin{equation}
\label{2}
\int_0^{\, \infty} \1_{ \{ R^X_{k} (t) = R^X_{\ell} (t)\} } \, \dx  t \,=\, 0\,, ~~~~~\forall ~~ k < \ell, 
\end{equation}
\begin{equation}
\label{3}
\left\{ t \in (0, \infty)\,:\, R^X_1 (t) = R^X_3 (t)   \right\} \,=\, \emptyset 
\end{equation}
with probability one. Here  we denote  the descending order statistics  by 
\begin{equation}
\label{ranks}
\max_{j=1,2,3} X_j  (t) =: R^X_{1} (t) \ge   R^X_{2} (t) \ge   R^X_{3} (t) :=  \min_{j=1,2,3} X_j (t)\,, \qquad t \in [0, \infty),  
\end{equation}
and   adopt  the convention of resolving ties always in favor of the lowest index $  i\,$; for instance, we set 
$$
R^X_{1} (t) = X_1 (t)\,, ~~ R^X_{2} (t) = X_3 (t)\,, ~~ R^X_{3} (t) = X_2 (t)\qquad \hbox{on}~~~\big\{ X_1 (t) = X_3 (t) > X_2 (t) \big\}\,.
$$

The dynamics of (\ref{1}) mandate ballistic motions for the leader and   laggard particles with drifts $\, \delta_1\,$ and $  \delta_3\,$, respectively,  which act here as ``outer hedges";  and   diffusive (Brownian) motion   with   drift $\, \delta_2\,,$ for the   particle in the middle. The condition (\ref{2}) posits that    collisions of particles are {\it non-sticky,} in   that the  set of all collision times has zero \textsc{Lebesgue} measure; while   (\ref{3}) proscribes triple collisions. 

 As a  canonical example, it is useful to keep in mind  the symmetric configuration 
\begin{equation}
\label{4}
 \, \delta_3 = - \delta_1 = \gamma >0 = \delta_2\,, 
\end{equation} 
for which the system of equations (\ref{1}) takes the appealing, symmetric form
\begin{equation}
\label{1a}
X_i (\cdot) = x_i + \gamma    \int_0^{\, \cdot} \Big( \1_{ \{ X_i (t) = R^X_{3} (t)\} } - \1_{ \{ X_i (t) = R^X_{1} (t)\} } \Big)  \dx t  + \int_0^{\, \cdot} \1_{ \{ X_i (t) = R^X_{2} (t)\} } \, \dx B_i (t)\,.
\end{equation}
This system was introduced and studied for the first time in the technical report \citep{FernholzER11}. 
Figure \ref{Fig: 1}, which    illustrates its path behavior, is taken from that report. %

A very  salient feature of the dynamics in (\ref{1}) is that its dispersion structure is both degenerate and discontinuous. It should come then as no surprise, that the analysis of the system (\ref{1})--(\ref{3}) might be not   entirely trivial; in particular, it is not covered by the results in either \citep{MR532498} 
or \citep{MR917679}.  
The question then, is whether a process $X(\cdot)$ with 
  (\ref{1}) and 
  (\ref{2}), (\ref{3}) exists; and if so, whether its distribution  --- and, a bit more ambitiously, its sample path structure ---  is determined uniquely.

\subsection{Analysis}
 \label{sec2}

Suppose that   a solution to the system of equation (\ref{1}) subject to the conditions of (\ref{2}), (\ref{3}),      has been constructed. Its descending  order-statistics  
are given then as 
\begin{equation}
\label{RX1}
R^X_{1} (t)\,=\, x_1 + \delta_1\, t + {1 \over \,2\,} \, \Lambda^{(1,2)}(t)\,, \qquad R^X_{3} (t)\,=\, x_3 + \delta_3\, t  -   {1 \over \,2\,} \, \Lambda^{(2,3)}(t)
\end{equation}
\begin{equation}
\label{RX2}
R^X_{2} (t)\,=\, x_2 + \delta_2\, t + W(t) - {1 \over \,2\,} \, \Lambda^{(1,2)}(t) + {1 \over \,2\,} \, \Lambda^{(2,3)}(t)
\end{equation} 
for $\, 0 \le t < \infty\,$, on the strength of the results in \citep{MR2428716}. 
Here, the scalar process 
\begin{equation}
\label{W2}
W (\cdot):=\sum_{i=1}^3 \int_0^{\, \cdot} \1_{ \{ X_i (t) = R^X_{2} (t)\} } \, \dx B_i (t)=\sum_{i=1}^3 \left( X_i (\cdot) - x_i - \sum_{k=1}^3\, \delta_k \int_0^{\, \cdot} \1_{ \{ X_i (t) = R^X_{k} (t)\} }   \dx t \right)
\end{equation}
is standard Brownian motion by the \textsc{P. L\'evy} theorem; and we denote  the local time accumulated  at the origin by the continuous, nonnegative semimartingale $\, R^X_{k} (\cdot)- R^X_{\ell} (\cdot)\,$ over the time interval $\,[0,t] $, by 
\begin{equation}
\label{Lambda}
\Lambda^{(k,\ell)}(t) \, \equiv \, L^{R^X_{k} - R^X_{\ell}  } (t)\,, \qquad k < \ell.
\end{equation}
  Throughout this paper we use the convention 
  \newpage
\begin{equation}
\label{LT}
 L^\Xi (\cdot) \equiv L^\Xi (\cdot\, ; 0) :=  \lim_{\varepsilon \downarrow 0} \frac{1}{2 \varepsilon} \int_0^\cdot \1_{ \{ \Xi (t) < \varepsilon \} }  \dx \langle M \rangle (t) =    \int_0^{\, \cdot}  \1_{ \{ \Xi (t) = 0\} }   \dx \Xi (t)  =   \int_0^{\, \cdot}  \1_{ \{ \Xi (t) = 0\} }   \dx C (t) 
\end{equation}
for the ``right" local time at the origin of a continuous, nonnegative semimartingale of the form $\, \Xi (\cdot) = \Xi (0) + M(\cdot) + C (\cdot)\,$, with $\, M(\cdot)\,$ a continuous local martingale and $\, C(\cdot)\,$ a  process of finite first variation on compact intervals. The local time process $\,  L^\Xi (\cdot) \,$ is   continuous, adapted and nondecreasing,   flat off the zero-set $\, \{ t \ge 0 : \Xi (t) =0\}\,$. 

We denote now the sizes of the gaps between the leader and the middle particle, and between the middle particle and the laggard,  by 
\begin{equation}
\label{GH}
G (\cdot)\,:=\, R^X_{1} (\cdot)- R^X_{2} (\cdot)\,, \qquad H (\cdot)\,:=\, R^X_{2} (\cdot)- R^X_{3} (\cdot),
\end{equation}
respectively,   and obtain from (\ref{RX1})--(\ref{Lambda})  the semimartingale representations
\begin{equation}
\label{G}
G (t) \,=\, x_1 - x_2 - \big(\delta_2 - \delta_1 \big) \,t  - W(t) - {1 \over \,2\,} \, L^{H}(t) +   L^{G}(t)\,, \qquad 0 \le t < \infty
\end{equation}
\begin{equation}
\label{H}
H (t) \,=\, x_2 - x_3 - \big(\delta_3 - \delta_2 \big)\, t  + W(t) - {1 \over \,2\,} \, L^{G}(t) +   L^{H}(t)\,, \qquad 0 \le t < \infty 
\end{equation}
where we recall the identifications $\, L^G (\cdot) \equiv \Lambda^{(1,2)}(\cdot) \,,$   $\, L^H (\cdot) \equiv \Lambda^{(2,3)}(\cdot)\, $ from (\ref{GH}), (\ref{Lambda}). We  introduce also the continuous semimartingales
$$
U(t)  = x_1 - x_2 - \big(\delta_2 - \delta_1 \big) \,t  - W(t) - {1 \over \,2\,} \, L^{H}(t)\,, \quad V(t)  = x_2 - x_3 - \big(\delta_3 - \delta_2 \big)\, t  + W(t) - {1 \over \,2\,} \, L^{G}(t)\,,
$$
and  observe
\begin{equation}
\label{SkorU}
G (\cdot) \,=\, U (\cdot) + L^G (\cdot) \, \ge \, 0\,, \qquad \int_0^\infty \1_{ \{ G(t)>0\} } \, \dx L^G (t) \,=\,0
\end{equation}
\begin{equation}
\label{SkorV}
H (\cdot) \,=\, V (\cdot)  + L^H (\cdot) \, \ge \, 0\,, \qquad \int_0^\infty \1_{ \{ H(t)>0\} } \, \dx L^H (t) \,=\,0\,.
\end{equation}

In other words, the ``gaps" $\, G(\cdot)\,$, $\, H(\cdot)\,$ are the \textsc{Skorokhod} reflections of the semimartingales $\, U(\cdot) $ and  $  V(\cdot)$, respectively. The theory of the \textsc{Skorokhod} reflection problem (e.g., Lemma 3.6.14  in \citep{MR1121940}) 
provides now the relationships
\begin{equation}
\label{5}
L^G (t) \,=\, \max_{0 \le s \le t} \big( - U (s) \big)^+\,=\, \max_{0 \le s \le t} \Big(-(x_1 - x_2) + \big(\delta_2 - \delta_1 \big) \,s  + W(s) + {1 \over \,2\,} \, L^{H}(s) \Big)^+ 
\end{equation}
\begin{equation}
\label{6}
L^H (t) \,=\, \max_{0 \le s \le t} \big( - V (s) \big)^+\,=\, \max_{0 \le s \le t} \Big(-(x_2 - x_3) + \big(\delta_3 - \delta_2 \big) \,s  - W(s) + {1 \over \,2\,} \, L^{G}(s) \Big)^+ 
\end{equation}
between the two local time processes $\, L^G (\cdot) \equiv \Lambda^{(1,2)}(\cdot) \,$ and $\, L^H (\cdot) \equiv \Lambda^{(2,3)}(\cdot)\,,$ once the scalar  Brownian motion $W(\cdot)$ has been specified.

\noindent
$\bullet~$ 
Finally, we note that   the equations of (\ref{G})-(\ref{H}) can be cast in the form
\begin{equation} 
\label{eq: mfrakQ}
\begin{pmatrix}
      G(t)   \\
      H(t)  
\end{pmatrix}
\,=:\, \mathfrak{G} (t) \,=\, \mathfrak{g} + \mathfrak{Z} (t) + \mathcal{R}\, \mathfrak{L} (t)\,, \qquad 0 \le t < \infty\,,
\end{equation}
of \citep{MR606992}, 
where 
\newpage
$$
\mathcal{R}\, =\, \mathcal{I} - \mathcal{Q}\,, \qquad \mathcal{Q}\, := \begin{pmatrix}
      0 &     1/2 \\
        1/2 &   0
\end{pmatrix}\,, \qquad \mathfrak{g} = \mathfrak{G}(0)\,, \qquad \mathfrak{L} (t) \,= \begin{pmatrix}
      L^G(t)    \\
      L^H (t)  
\end{pmatrix},
$$
\begin{equation} 
\label{Z}
 \mathfrak{Z} (t) \, = \begin{pmatrix}
       ( \delta_1 - \delta_2) t - W(t)      \\
       ( \delta_2 - \delta_3) t + W(t)  
\end{pmatrix}\,, \qquad 0 \le t < \infty\,.
\end{equation}
One reflects  off the faces of the nonnegative quadrant, in other words,  the degenerate, two-dimensional Brownian motion $\, \mathfrak{Z} (\cdot) \,$ with drift vector and covariance matrix given respectively by 
\begin{equation} 
\label{mA}
 {\bm m}  \,= \,\big(  \delta_{1} - \delta_{2} \,,  \,\delta_{2} - \delta_{3}\big)^{\prime}\, ,~~ ~~~~\mathcal{C}\, := \begin{pmatrix}
      1 &     -1 \\
        -1 &   1
\end{pmatrix}.
\end{equation}
 The directions of reflection are   the row vectors of the {\it reflection matrix} $\,\mathcal{R}\,$, and  
the matrix $\,  \mathcal{Q}=  \mathcal{I} - \mathcal{R}\,$ has spectral radius strictly less than $1$,   as postulated by \citep{MR606992}. 
The process $\,\mathfrak Z(\cdot)\,$ is allowed in \citep{MR606992} to be degenerate, i.e., its covariation  matrix   can be only nonnegative-definite.

\subsection{Synthesis} 
\label{sec3}

We trace now the steps of subsection \ref{sec2} in reverse: start with  given real numbers $\, \delta_1, \,\delta_2,\,\delta_3\,$, and $ \, x_1 > x_2 > x_3\,$, and construct   a filtered probability space $\, (\Omega, \F, \Prob),$ $\mathbb{F}= \big\{ \F (t) \big\}_{0 \le t < \infty}\,$ rich enough to support  a scalar, standard Brownian motion $\, W(\cdot) $.  In fact, we select the filtration $\,\mathbb{F} \,$ to be $\,\mathbb{F}^W= \big\{ \F^W (t) \big\}_{0 \le t < \infty},$ the smallest right-continuous filtration to which 
$W(\cdot)$ is adapted.

Informed by the analysis of the previous section we  consider, by analogy with (\ref{5})-(\ref{6}),  the two-dimensional \textsc{Skorokhod} reflection system 
 \begin{equation}
\label{7}
A (t) \,=\,   \max_{0 \le s \le t} \Big(-(x_1 - x_2) + \big(\delta_2 - \delta_1 \big) \,s  + W(s) + {1 \over \,2\,} \, \Gamma (s) \Big)^+\,,\qquad 0 \le t <\infty
\end{equation}
 \begin{equation}
\label{8}
\Gamma (t) \,=\,  \max_{0 \le s \le t} \Big(-(x_2 - x_3) + \big(\delta_3 - \delta_2 \big) \,s  - W(s) + {1 \over \,2\,} \,A(s) \Big)^+\,,\qquad 0 \le t <\infty
\end{equation}
 for two continuous, nondecreasing and $\,\mathbb{F}^W-$adapted processes $\, A(\cdot)\,$ and $\, \Gamma (\cdot)\,$ with $\, A(0) = \Gamma (0) =0\,$.  This system of equations  is of the type  studied in \citep{MR606992}. 
 From Theorem 1 of that paper, we know that it  
 possesses a unique, $\,\mathbb{F}^W-$adapted solution.

Once the solution $\, \big( A(\cdot), \Gamma (\cdot) \big)\,$ to this system   has been constructed, we define the processes 
\begin{equation}
\label{UV}
U(t) := x_1 - x_2 - \big(\delta_2 - \delta_1 \big) \,t  - W(t) - {1 \over \,2\,} \, \Gamma(t)\,, \quad V(t) := x_2 - x_3 - \big(\delta_3 - \delta_2 \big)\, t  + W(t) - {1 \over \,2\,} \, A(t) ~~
\end{equation}
 and then ``fold" them to obtain their \textsc{Skorokhod} reflections; that is, the    continuous semimartingales  
 \begin{equation}
\label{SkorU1}
G ( t) \,:=\, U ( t) +\max_{0 \le s \le t} \big( - U (s) \big)^+ \, = \, x_1 - x_2 - \big(\delta_2 - \delta_1 \big) \,t  - W(t) - {1 \over \,2\,} \, \Gamma(t) + A(t) \, \ge \, 0
\end{equation}
\begin{equation}
\label{SkorV1}
H ( t) \,:=\,   V ( t) +\max_{0 \le s \le t} \big( - V (s) \big)^+ \, = \, x_2 - x_3 - \big(\delta_3 - \delta_2 \big) \,t  + W(t) - {1 \over \,2\,} \, A(t) + \Gamma(t)\, \ge \, 0
\end{equation}
for $\, t \in [0, \infty)\,$, in accordance with (\ref{SkorU})--(\ref{6}). From the theory of the \textsc{Skorokhod} reflection problem once again,  we deduce   the  a.e. properties
\newpage
\begin{equation}
\label{10}
\int_0^\infty \1_{ \{ G(t)>0\} } \, \dx A(t) \,=\,0\,, \qquad \int_0^\infty \1_{ \{ H(t)>0\} } \, \dx \Gamma (t) \,=\,0\,;
\end{equation}
and  the theory of semimartingale local time (\citep{MR1121940}, 
Exercise 3.7.10), gives 
\begin{equation}
\label{9}
\int_0^\infty \1_{ \{ G(t)=0\} } \, \dx t= \int_0^\infty \1_{ \{ G(t)=0\} } \, \dx \langle G \rangle ( t)=0\,, \quad \int_0^\infty \1_{ \{ H(t)=0\} } \, \dx t = \int_0^\infty \1_{ \{ H(t)=0\} } \, \dx \langle H \rangle ( t)=0\,.
\end{equation}

\subsubsection{Constructing the Ranks} 
\label{sec: ConstRanks}

We introduce now, by analogy with \eqref{RX1}-\eqref{RX2},  the processes
\begin{equation}
\label{R1}
R_{1} (t)\,:=\, x_1 + \delta_1\, t + {1 \over \,2\,} \, A(t)\,, \qquad R_{3} (t)\,:=\, x_3 + \delta_3\, t  -   {1 \over \,2\,} \, \Gamma (t) \,,
\end{equation}
\begin{equation}
\label{R2}
R_{2} (t)\,:=\, x_2 + \delta_2\, t + W(t) - {1 \over \,2\,} \, A(t) + {1 \over \,2\,} \, \Gamma (t)
\end{equation}
\noindent
for $\, 0 \le t < \infty\,$ and note the relations $\, R_1 (\cdot) - R_2 (\cdot) = G (\cdot)\ge 0\,$, $\, R_2 (\cdot) - R_3 (\cdot) = H (\cdot) \ge 0\,$ in conjunction with 
 (\ref{SkorU1}) and  (\ref{SkorV1}). In other words, we have the a.e.\,comparisons, or ``descending rankings", 
 $ R_1 (\cdot)   \ge    R_2 (\cdot)  \ge    R_3 (\cdot)     $. 
It is clear from the discussion following (\ref{7}), (\ref{8}),   that these processes are   adapted to the filtration generated by the driving Brownian motion $W(\cdot)$, whence the   inclusion $\,\mathbb{F}^{\,(R_1, R_2, R_3)} \subseteq \mathbb{F}^{\,W}.   $

 \smallskip
Let us show that  these rankings never collapse. To put things a bit   colloquially:   {\it ``Two ballistic motions cannot squeeze a diffusive (Brownian) motion".}  We are indebted to Drs. Robert \textsc{Fernholz} (cf.$\,$ \citep{FernholzER10} 
) and Johannes \textsc{Ruf} for the argument that follows.

\begin{prop}
 \label{notriple}
With probability one,  we have: $\, R_1 (\cdot)- R_3 (\cdot) = G(\cdot) + H(\cdot) >0\,$.
 \end{prop}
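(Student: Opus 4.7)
The plan is to argue by contradiction. I set $\tau := \inf\{t \geq 0 : G(t) + H(t) = 0\}$, assume $\Prob(\tau < \infty) > 0$, and work on that event with $y := R_1(\tau) = R_2(\tau) = R_3(\tau)$.

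First I would exploit the one-sided, ballistic-plus-nondecreasing-local-time nature of the outer ranks. Because $A(\cdot)$ is nondecreasing, the identity $R_1(\tau) - R_1(t) = \delta_1(\tau - t) + \tfrac{1}{2}(A(\tau) - A(t))$ from \eqref{R1} forces $R_1(t) \leq y - \delta_1(\tau - t)$; symmetrically $R_3(t) \geq y - \delta_3(\tau - t)$. The ranking $R_3(t) \leq R_2(t) \leq R_1(t)$ then confines $R_2(t)$ to a shrinking cone $[y - \delta_3(\tau - t), y - \delta_1(\tau - t)]$ of width $(\delta_3 - \delta_1)(\tau - t)$; the case $\delta_1 > \delta_3$ is ruled out immediately, because the cone would be empty for $t < \tau$.

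Second, I would convert this cone bound into two-sided linear control on the local times. Plugging $R_1(t) \geq y - \delta_3(\tau - t)$ into the identity $A(\tau) - A(t) = 2[y - R_1(t) - \delta_1(\tau - t)]$ (again from \eqref{R1}) yields $A(\tau) - A(t) \leq 2(\delta_3 - \delta_1)(\tau - t)$, and analogously $\Gamma(\tau) - \Gamma(t) \leq 2(\delta_3 - \delta_1)(\tau - t)$. Substituting these, together with $|R_2(\tau) - R_2(t)| \leq (\delta_3 - \delta_1)(\tau - t)$, into the rearrangement of \eqref{R2},
\begin{equation*}
W(\tau) - W(t) \,=\, [R_2(\tau) - R_2(t)] \,-\, \delta_2(\tau - t) \,+\, \tfrac{1}{2}[A(\tau) - A(t)] \,-\, \tfrac{1}{2}[\Gamma(\tau) - \Gamma(t)]\,,
\end{equation*}
produces a Lipschitz-at-$\tau$ estimate $|W(\tau) - W(t)| \leq C(\tau - t)$ valid for all $t \in [0,\tau]$, with $C$ depending only on $\delta_1, \delta_2, \delta_3$.

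Finally, this bound is incompatible with Brownian roughness: by the law of the iterated logarithm --- equivalently, by the theorem that Brownian paths are nowhere differentiable --- one has $\limsup_{t \uparrow \tau} |W(\tau) - W(t)|/(\tau - t) = +\infty$ almost surely on $\{\tau < \infty\}$, contradicting the finite Lipschitz constant just obtained. Hence $\Prob(\tau < \infty) = 0$, as claimed. I expect the main delicate point to be the second step --- extracting two-sided linear control on $A, \Gamma$ from mere one-sided (monotonicity) bounds on $R_1, R_3$. That step uses essentially that the leader and laggard carry no Brownian noise; if the outer ranks were diffusive, as in Section \ref{sec5}, neither the cone bound nor the linear bound on local-time increments would survive, in accord with the fact that triple collisions \emph{do} occur in that setting.
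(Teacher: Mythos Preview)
Your argument is correct and follows the same contradiction-via-nondifferentiability strategy as the paper: trap $R_2$ in a shrinking linear cone determined by the ballistic hedges, deduce a Lipschitz bound on $W$ at the collision time, and invoke \textsc{Paley--Wiener--Zygmund}. The paper streamlines your Step~2 by a trick worth knowing: rather than bound $A(\tau)-A(t)$ and $\Gamma(\tau)-\Gamma(t)$ separately, it passes to the average $\overline{R}(t):=\tfrac{1}{3}\big(R_1(t)+R_2(t)+R_3(t)\big)$, in which the local times cancel outright by \eqref{R1}--\eqref{R2}, giving $\overline{R}(\tau)-\overline{R}(t)=\overline{\delta}(\tau-t)+\tfrac{1}{3}\big(W(\tau)-W(t)\big)$ and hence the two-sided linear control on $W$ in one line. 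One minor slip: the cone gives $R_2(\tau)-R_2(t)\in[\delta_1(\tau-t),\,\delta_3(\tau-t)]$, so the correct bound is $|R_2(\tau)-R_2(t)|\le\max(|\delta_1|,|\delta_3|)(\tau-t)$, not $(\delta_3-\delta_1)(\tau-t)$; this does not affect your conclusion.
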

 
\noindent
{\it Proof:} We shall show that there cannot possibly exist   numbers $\, T \in (0, \infty)\,$ and $\,r \in \R\,$, such that $\, R_1 (T) = R_2 (T) = R_3 (T)= r  \,$.

We  argue by contradiction: If such a configuration were possible for some $\, \omega \in \Omega\,$ and some $\, T = T (\omega) \in (0, \infty)\,$, $\, r = r (\omega) \in \R\,$, we would have
$$
r - \delta_3 (T-t) \, \le \, R_3 (t,\omega) \, \le \, R_2 (t,\omega) \, \le \, R_1 (t,\omega) \, \le \, r - \delta_1 (T-t)\,, \qquad 0 \le t < T\,. 
$$

\smallskip
\noindent
This is already impossible if $\, \delta_1 > \delta_3\,$, so let us assume $\, \delta_1 \le \delta_3\,$ and try to arrive at a contradiction  in this case as well. The above quadruple inequality implies, a fortiori, 
$$
r - \delta_3 (T-t) \, \le \, \overline{R} (t,\omega) := \frac{1}{\,3\,}\big( R_3 (t,\omega) +R_2 (t,\omega) +R_1 (t,\omega)  \big) \, \le \, r - \delta_1 (T-t)\,, \qquad 0 \le t < T\,.
$$
But we have $\, r - \overline{R} (t, \omega) = \overline{R} (T, \omega) - \overline{R} (t, \omega) = \overline{\delta} (T-t) + \big( W(T, \omega)-W(t, \omega) \big) /3\,$, where $\,  \overline{\delta} := ( \delta_1 + \delta_2 + \delta_3) / 3\,$, and back into the above inequality this gives 
$$
3 \, \big (\delta_1 - \overline{\delta}\, \big) \, \le \, \frac{\, W(T, \omega) - W(t, \omega)\,}{T-t}\, \le \,3 \, \big (\delta_3 - \overline{\delta}\, \big) \,, \qquad 0 \le t < T\,.
$$
However,  from the \textsc{Payley-Wiener-Zygmund} theorem for the Brownian motion $\, W(\cdot)\,$ (\citep{MR1121940},    
Theorem 2.9.18, p.\,110), this double inequality would force $\, \omega\,$ into a $\Prob-$null set.   \qed 

\newpage 

Since in this case the sequence $\,(\sigma_{1}^{2}, \sigma_{2}^{2}, \sigma_{3}^{2}) = (0, 1, 0) \,$ of local covariances-by-rank is concave, the lack of triple collisions just established is in formal accordance with known results;  although not obviously expected, let alone deduced, from them, because the local variance structure here is degenerate.

\subsubsection{Identifying the Increasing Processes $\, A(\cdot)\,$,$\,\, \Gamma (\cdot) \,$  as Local Times}
 \label{3.2}

 We claim   that, in addition to (\ref{10}) and (\ref{9}), the properties   
 \begin{equation}
\label{11}
\int_0^\infty \1_{ \{ H(t)=0\} } \, \dx A (t) \,=\,0\,, \qquad
\int_0^\infty \1_{ \{ G(t)=0\} } \, \dx \Gamma(t) \,=\,0\ 
\end{equation}
are also valid a.e. Indeed, we know from (\ref{10}) that $\, A(\cdot)\,$ is flat off the set $\, \{ t \ge 0: G(t) = 0\}\,$, so we have 
$\,
\int_0^\infty \1_{ \{ H(t)=0\} } \, \dx A (t) = \int_0^\infty \1_{ \{ H(t)=G(t)=0\} } \, \dx A (t) \,$;  but this last expression is a.e. equal to zero    because, as we have shown, $  \{ t \ge 0: G(t) = H(t) =0\} = \emptyset \,$ holds mod.$\,\Prob\,$. This proves the first equality in (\ref{11}); the second is argued similarly.

But now, the local time at the origin of the continuous, nonnegative semimartingale $\, G(\cdot)\,$ is given as
$$
L^G (\cdot) = \int_0^{\, \cdot} \1_{ \{ G(t)=0\} }\, \dx G(t) = \int_0^{\, \cdot} \1_{ \{ G(t)=0\} }\, \dx A(t) 
- \int_0^{\, \cdot} \1_{ \{ G(t)=0\} }\, \frac{\,\dx \Gamma (t)\,}{2} + \big( \delta_1 - \delta_2 \big) \int_0^{\, \cdot} \1_{ \{ G(t)=0\} }\, \dx t
$$

\medskip
\noindent
 from (\ref{LT}) and (\ref{SkorU1}). From (\ref{9}) and (\ref{11})  the last two integrals vanish, so  (\ref{10}) leads to 
 \begin{equation}
\label{LGA}
L^G (\cdot) \,=\,\int_0^{\, \cdot} \1_{ \{ G(t)=0\} }\, \dx A(t) \,=\, A(\cdot) \,; \qquad \text{and } \qquad L^H (\cdot)   \,=\, \Gamma(\cdot)
\end{equation}
  is shown similarly.     In the light of Proposition \ref{notriple}, these local times satisfy the rather interesting property
 \begin{equation}
\label{LGA+LHG}
\frac{1}{\,2\,} \Big( L^G (t) + L^H (t) \Big) \, > \, x_3 - x_1 + \big( \delta_3 - \delta_1 \big)\, t\,, \qquad 0 < t < \infty\,.
 \end{equation}

\begin{rem}
\label{StrFilt}
{\it The Structure of Filtrations:} 
  We have identified the components of  
  the pair   
   $  (A(\cdot)  ,$ $\Gamma (\cdot))  ,$  solution  of the system   (\ref{7})-(\ref{8}),  as the local times at the origin of the continuous semimartingales $  R_1 (\cdot) -  R_2 (\cdot) = G(\cdot) \ge 0 \,$ and $ \,R_2 (\cdot) -  R_3 (\cdot) = H(\cdot) \ge 0\,.$ In particular, this  implies the filtration inclusions  $\,  \mathbb{F}^{\,(A, \Gamma)} = \mathbb{F}^{\,(R_1, R_3)}   \subseteq  \mathbb{F}^{\,(G, H )}  \subseteq  \mathbb{F}^{\,(R_1, R_2, R_3)}  ;$ and back in (\ref{R2}), it gives $\,  \mathbb{F}^{\,W} \subseteq  \mathbb{F}^{\,(R_1, R_2, R_3)}    .$     But we have already noted the reverse of this inclusion, so we conclude that the process of ranks generates exactly the same filtration as the driving scalar Brownian motion: $\,
\mathbb{F}^{\,(R_1, R_2, R_3)}  = \mathbb{F}^{\,W}  .$

\end{rem}

\subsubsection{Constructing the Individual Motions  (``Names")}
 \label{3.3}

Once the ``ranks" $\, R_{1} (\cdot) \ge   R_{2} (\cdot) \ge   R_{3} (\cdot) \,$ have been constructed in  \S \ref{sec: ConstRanks}  on   the   filtered probability  space $\, (\Omega, \F, \Prob)\,$, $\mathbb{F}=  \{ \F (t)  \}_{0 \le t < \infty}\,$, with   $\,\mathbb{F} \,$   selected as   the smallest right-continuous filtration $\,\mathbb{F}^W=  \{ \F^W (t)  \}_{0 \le t < \infty}\, $ to which the scalar Brownian motion $W(\cdot)$ is adapted,  we can construct as in  the proof of Theorem 5 in \citep{MR3449305} 
the ``names''   that generate these ranks --- that is, processes $ X_1 (\cdot), X_2 (\cdot), X_3 (\cdot)\,$, as well as a three-dimensional Brownian motion $\, ( B_1 (\cdot), B_2 (\cdot), B_3 (\cdot) )\,$ defined  on this same space, such  that  the equation (\ref{1}) is satisfied  and $\,  R^X_k (\cdot) \equiv R_k (\cdot)\,, ~ k=1, 2, 3\,$. 
 
It is also clear from our construction that the conditions (\ref{2}) and (\ref{3}) are also satisfied: the first thanks to the properties of (\ref{9}), the second because of Proposition \ref{notriple}.

\smallskip
\noindent
$\bullet~$ 
Alternatively, the construction of a pathwise unique, strong solution for the system (\ref{1}) can be carried out along the lines of Proposition 8 in \citep{MR3055258}. 
We start  at time $\, \tau_0 \equiv 0\,$ and follow  the paths of the top particle and of the pair consisting of the bottom two particles {\it separately}, until the top particle collides with the leader of the bottom pair (at time $\, \varrho_0)$. Then we follow the paths of the bottom particle and of the pair consisting of the  top two particles {\it separately}, until the bottom particle collides with the laggard of the top pair (at time $\, \tau_1)$. We repeat the procedure until the first time we see a triple collision,   obtain two interlaced sequences of stopping times $\, \{ \tau_k \}_{k \in \N_0}\,$ and $\, \{ \varrho_k \}_{k \in \N_0}\,$ with 
\begin{equation}
\label{seq}
0 = \tau_0 \le \varrho_0 \le \tau_1 \le \varrho_1 \le \cdots \le \tau_k \le \varrho_k \le \cdots \,\,,  
\end{equation}
and denote by the first time   a triple collision occurs
\begin{equation}
\label{exp}
\mathcal{S} \,:=\, \inf \big\{ t \in (0, \infty): X_1 (t) = X_2 (t) = X_3 (t)   \big\}= \lim_{k \ar} \tau_k = \lim_{k \ar} \varrho_k  \, . 
\end{equation}
  During each interval of the form $\, [\tau_k, \varrho_k)\,$ or $\, [ \varrho_k, \tau_{k+1})\,$, a pathwise unique, strong solution of the corresponding two-particle system is constructed as in Theorem 4.1 in \citep{MR3055262}. 
  
  We end up in this manner with a three-dimensional Brownian motion $\, ( B_1 (\cdot), B_2 (\cdot), B_3 (\cdot) ),$ and with three processes $\, X_1 (\cdot), X_2 (\cdot), X_3 (\cdot)\,$ that satisfy the system of (\ref{1}) as well as the requirement (\ref{2}), once again thanks to results in \citep{MR3055262}. 
  For this system, the ranked processes $\, R^X_{1} (\cdot) \ge   R^X_{2} (\cdot) \ge   R^X_{3} (\cdot) \,$ as in (\ref{ranks}),  satisfy the equations we studied in  \S \,\ref{sec: ConstRanks}, and generate the same filtration   $\,\mathbb{F}^W=  \{ \F^W (t)  \}_{0 \le t < \infty}\, $ as  the scalar Brownian motion $W(\cdot)$ above (Remark \ref{StrFilt}).    
  We have seen in Proposition  \ref{notriple} that for such a system there are no triple collisions:  $\,\mathcal{S} = \infty\,$. Thus the  condition (\ref{3}) is satisfied as well, 
  all   inequalities in (\ref{seq}) are strict, and we have proved the following result.

\begin{thm}
The system of equations \eqref{1} admits a pathwise unique  strong solution,   satisfying the requirements   \eqref{2}, \eqref{3}.   
\end{thm}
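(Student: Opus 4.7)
The plan is to assemble the two constructions worked out in subsections \ref{sec3}--\ref{3.3} into an existence argument, and to derive pathwise uniqueness from the uniqueness of the planar Skorokhod reflection problem solved there.

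For existence, I would follow the ranks-first route. Starting from a filtered probability space carrying a scalar Brownian motion $W$ with natural filtration $\mathbb{F}^W$, I invoke Theorem 1 of \citep{MR606992} to obtain the unique $\mathbb{F}^W$-adapted pair $(A(\cdot), \Gamma(\cdot))$ solving the reflection system \eqref{7}--\eqref{8}. Defining $U,V$ by \eqref{UV}, folding them as in \eqref{SkorU1}--\eqref{SkorV1} to produce the gaps $G, H$, and then setting the ranks via \eqref{R1}--\eqref{R2} gives three continuous processes with $R_1 \ge R_2 \ge R_3$. From \eqref{LGA} we already have $A = L^G$ and $\Gamma = L^H$; Proposition \ref{notriple} excludes triple collisions; and \eqref{9} yields the non-sticky condition \eqref{2}. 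Finally, I pass from ranks to names by the iterative two-particle argument outlined around \eqref{seq}: on each interval $[\tau_k,\varrho_k)$ or $[\varrho_k, \tau_{k+1})$ only two of the three particles interact, so a pathwise unique strong solution is provided by Theorem 4.1 of \citep{MR3055262}; since Proposition \ref{notriple} gives $\mathcal{S} = \infty$, the construction extends to all times and delivers a strong solution of \eqref{1} satisfying \eqref{2} and \eqref{3}, with three Brownian motions $B_1, B_2, B_3$ assembled in the manner of Theorem 5 of \citep{MR3449305}.

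For pathwise uniqueness, I would begin with an arbitrary solution $(X_1, X_2, X_3, B_1, B_2, B_3)$ of \eqref{1} subject to \eqref{2}, \eqref{3} on some filtered probability space. The scalar process $W$ defined in \eqref{W2} is a Brownian motion by the \textsc{L\'evy} theorem, the descending order statistics satisfy \eqref{RX1}--\eqref{RX2}, and hence the gaps solve the same two-dimensional Skorokhod reflection system \eqref{7}--\eqref{8}. Since that system admits a unique $\mathbb{F}^W$-adapted solution, the local times $L^G, L^H$ and therefore the ranked processes $R^X_k$ are measurable functionals of $W$ alone. What remains is to recover the names from the ranks: between consecutive pairwise collision epochs the index-to-rank permutation is constant, at each double collision it is exchanged in a deterministic manner dictated by the index resolution convention, and Proposition \ref{notriple} rules out ambiguities arising from triple collisions; hence the $X_i$ agree pathwise with the processes built in the existence step. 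The main obstacle is exactly this permutation-tracking in the final step of uniqueness: although the ranks are determined by $W$, the individual names are not directly so, since each one shuttles between ranks across collisions. Writing this out rigorously amounts to showing that between the stopping times of \eqref{seq} the three-particle dynamics reduce to a two-particle subsystem whose strong uniqueness is available from Theorem 4.1 of \citep{MR3055262}, and then inductively propagating uniqueness across all collision epochs---an induction that works here precisely because triple collisions have been excluded, in marked contrast to the situation of Section \ref{sec5}.
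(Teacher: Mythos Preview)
Your proposal is correct and follows essentially the same approach as the paper: the ranks-first construction via the \textsc{Harrison--Reiman} system \eqref{7}--\eqref{8} together with Proposition~\ref{notriple} for existence, and the iterative two-particle argument of \citep{MR3055258} with the strong uniqueness of \citep{MR3055262} on each interval of \eqref{seq} for pathwise uniqueness. One small comment: in your uniqueness argument the intermediate step ``ranks are measurable functionals of $W$'' is correct but not actually needed, since $W$ in \eqref{W2} depends on the solution itself; pathwise uniqueness comes entirely from the two-particle iteration you invoke at the end, which compares two solutions driven by the \emph{same} $B_i$'s directly on successive intervals.
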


Figure 1, reproduced here from \citep{FernholzER10}, 
illustrates   trajectories of 
$\, X_1(\cdot), \, X_2(\cdot), \,X_3(\cdot)  \,$  for the  canonical example (\ref{4}). It is   clear from this picture and from the construction in  \S   \ref{sec: ConstRanks}, that the middle particle $\, R_2(\cdot)\,$ undergoes Brownian motion $\, W(\cdot)\,$ with reflection at the upper and lower boundaries, respectively $\, R_1 (\cdot)\,$ and $\, R_3 (\cdot)$, of a time-dependent domain.

In contrast to  the ``double \textsc{Skorokhod} map" studied by \citep{MR2349573}, 
where the upper and lower reflecting boundaries are given constants, these boundaries $\, R_1 (\cdot)\,$ and $\, R_3 (\cdot)\,$   are here random continuous functions of time,  of finite first variation on compact intervals. They are   ``sculpted" by the Brownian motion $\, W(\cdot)\,$ itself    via  the local times $\, L^G (\cdot) \equiv A(\cdot)\,$ and $\, L^H (\cdot) \equiv \Gamma (\cdot)\,,$  in the manner of the system   (\ref{7}), (\ref{8}). The upper (respectively, lower) boundary decreases (resp., increases) by linear segments at a $45^o-$angle, and increases (resp., decreases) by a singularly continuous  \textsc{Cantor}-like random function, governed by the local time $\, L^G (\cdot) \equiv A (\cdot) \,$ $($resp., $\, L^H (\cdot) \equiv \Gamma (\cdot) )$.


 \begin{figure}
  \begin{center}
  \includegraphics[scale=0.43]{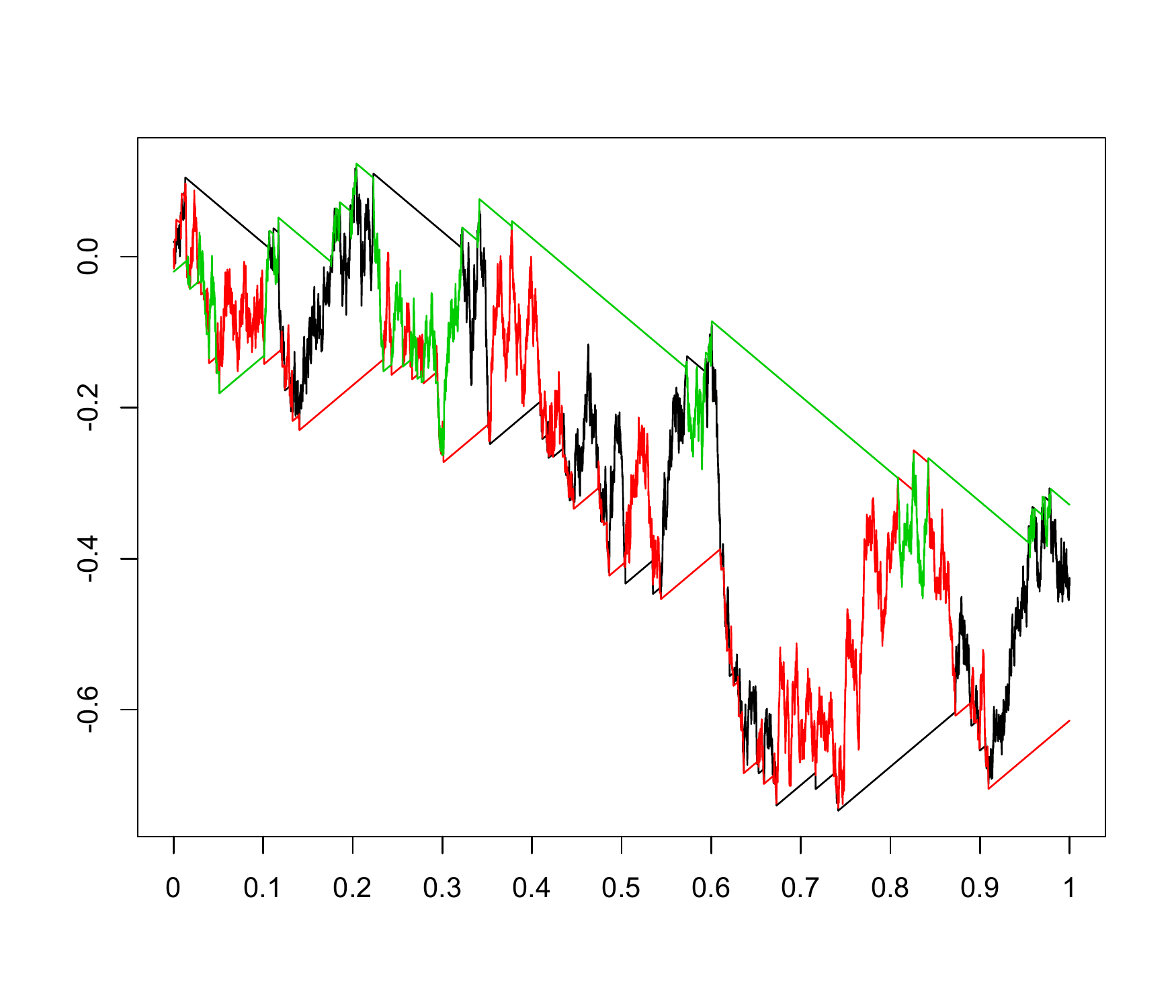}
    \caption{Simulated processes for the system in (\ref{1a}) with $\,\gamma =1\,$: Black $\,=X_1(\cdot)\,$,  Red $\,=X_2 (\cdot)\,$,  Green $\,=X_3(\cdot)$.   The 3-D process  $(X_1(\cdot), X_2 (\cdot), X_3 (\cdot))$ carries the same information content as a scalar Brownian Motion. 
    We are indebted to Dr.\,\textsc{E.R. Fernholz} for this picture, which illustrates the ``ballistic and \textsc{Cantor}-function-like nature of the outer hedges" and the diffusive motion in the middle. 
    } \label{Fig: 1}
  \end{center}
\end{figure}


\subsection{Positive Recurrence,  Ergodicity, Laws of Large Numbers}
 \label{sec4}

We present now a criterion for the process $\,(G(\cdot), H(\cdot))\,$ in (\ref{G})-(\ref{H}) to reach an arbitrary open neighborhood of the origin in finite expected time. We carry out this analysis  along the lines of \citep{MR1198420}. 
The system studied there, 
is a {\it non-degenerate} reflected Brownian motion $\, ({\bm X}_{\cdot}, {\bm Y}_{\cdot}) \,$ in the first orthant driven by a planar Brownian motion $\,({\bm B}_{\cdot}, {\bm W}_{\cdot})\,$, namely 
\begin{equation} 
\label{eq: H-R}
{\bm X}_{t} \, =\, {\bm x} + {\bm B}_{t} + {\bm \mu}\,  t + {\bm L}^{\bm X}_{t} + {\bm \alpha} {\bm L}^{\bm Y}_{t} \, , ~~~~
{\bm Y}_{t} \, =\, {\bm y} + {\bm W}_{t} + {\bm \nu} \,t + {\bm \beta}   {\bm L}^{\bm X}_{t}+ {\bm L}^{\bm Y}_{t}  \, , \quad 0 \le t < \infty \, . 
\end{equation}
Here $\,({\bm x}, {\bm y})\,$ is the initial state in the nonnegative quadrant, and $\,{\bm \mu}\, $, $\, {\bm \nu}\,$, $\,{\bm \alpha}\,$, $\,{\bm \beta}\,$ are real  constants.  A necessary and sufficient condition for the positive recurrence of $\,({\bm X}_{\cdot}, {\bm Y}_{\cdot})\,$ in (\ref{eq: H-R}) is 
\begin{equation} 
\label{eq: P-R}
{\bm \mu} + {\bm \alpha} {\bm \nu}^{-} < 0 \, , \qquad 
{\bm \nu} + {\bm \beta}{\bm \mu}^{-} < 0\,;  
\end{equation}
 and $\,x^{-}= \max(-x, 0)\,$ is the negative part of $\,x \in \mathbb R\,$ 
 (see Proposition 2.3 of \citep{MR1198420}; 
 according the interpretation offered in that paper, the quantities posited to be negative in \eqref{eq: P-R} are the ``effective drifts" of the process in the $x$ and $y$ directions, respectively).

By contrast, our system (\ref{G})-(\ref{H}) is   driven by the single Brownian motion $\,W(\cdot) ,$ thus  {\it degenerate,} and has the form  
\begin{equation} 
\label{eq: H-R1}
{\bm X}_{t} \, =\, {\bm x} - {\bm W}_{t} + {\bm \mu} t + {\bm L}^{\bm X}_{t} + {\bm \alpha} {\bm L}^{\bm Y}_{t} \, , \quad 
{\bm Y}_{t} \, =\, {\bm y} + {\bm W}_{t} + {\bm \nu} t + {\bm \beta}   {\bm L}^{\bm X}_{t}+ {\bm L}^{\bm Y}_{t}  \, , \quad 0 \le t < \infty  
\end{equation}
that one obtains by replacing formally the planar Brownian motion $\,({\bm B}_{\cdot}, {\bm W}_{\cdot})\,$ in (\ref{eq: H-R}) by $\,(-{\bm W}_{\cdot}, {\bm W}_{\cdot})$. The system (\ref{G})-(\ref{H}) can be cast in the form (\ref{eq: H-R1}), if we replace formally the triple $\, ({\bm X}_{\cdot}, {\bm Y}_{\cdot},   
{\bm W}_{\cdot})\,$ by the triple $\, (G(\cdot), H(\cdot), 
W(\cdot))\,$ and substitute $\,{\bm \mu } = - (\delta_{2} - \delta_{1})\,$, $\,{\bm \nu} = - (\delta_{3} - \delta_{2}) \,$, $\, {\bm \alpha} = {\bm \beta} = -1\, /\, 2\,$. 

\subsubsection{Existence, Uniqueness and Ergodicity of an Invariant Distribution}
 
 Here is the main result of this subsection.
 
 \begin{thm} 
 \label{prop: 4.1} 
Under the conditions 
\begin{equation} 
\label{ord2}
2 (\delta_{3} - \delta_{2}) + \big( \delta_{1} - \delta_{2} \big)^{-} > 0 \, , \qquad 
2 (\delta_{2} - \delta_{1}) + \big( \delta_{2} - \delta_{3} \big)^{-} > 0 \,  , 
\end{equation}
the process $\,\big(G( \cdot), H(\cdot)\big) \,$ in (\ref{G})--(\ref{H}) is positive-recurrent, and has a unique invariant measure 
$\, {\bm \pi}\,$ with $\, {\bm \pi}  ( (0,\infty)^2 ) =1\,,$  to which its time-marginal distributions  converge   as $\, t \ra \infty\,$.   
\end{thm}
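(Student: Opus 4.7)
The plan is to view $(G(\cdot), H(\cdot))$ of (\ref{G})--(\ref{H}) as a planar SRBM in the nonnegative orthant with drift vector $\,{\bm m}\,$, reflection matrix $\,\mathcal R = \mathcal I - \mathcal Q\,$, and the degenerate rank-one covariance $\,\mathcal C\,$ displayed in (\ref{mA}), and to match (\ref{ord2}) directly to the Harrison--Reiman positive-recurrence criterion (\ref{eq: P-R}) under the substitutions ${\bm \mu} = \delta_1 - \delta_2$, ${\bm \nu} = \delta_2 - \delta_3$, ${\bm \alpha} = {\bm \beta} = -1/2$ already identified in the paragraph preceding the theorem. The task is then to carry out a Foster--Lyapunov / Has'minskii--Dynkin argument for positive recurrence adapted to the degenerate (single-driver) setting, and to upgrade the result to unique ergodicity.

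The first and most delicate step is the choice of a Lyapunov function. A purely linear $\,V(g,h) = c_1 g + c_2 h\,$ with $c_1, c_2 > 0$ cannot simultaneously satisfy the conormal inequalities $\,c_1 - c_2/2 \le 0\,$ on $\{g = 0\}$ and $\,-c_1/2 + c_2 \le 0\,$ on $\{h = 0\}$ forced by the off-diagonal $-1/2$ entries of $\,\mathcal Q\,$, so I would instead take $V$ to be a nonnegative $C^2$ mollification of $\max(c_1 g,\, c_2 h)$, with positive coefficients $(c_1, c_2)$ chosen so that in each of the two angular sectors of the orthant the interior drift $\,{\bm m} \cdot \nabla V\,$ is strictly negative outside a compact set. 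The content of (\ref{ord2}) is precisely that such $(c_1, c_2)$ exist, as in the linear-algebra template underlying Proposition~2.3 of \citep{MR1198420}; crucially, the rank-one covariance only enters the bounded second-order term in the Ito--Tanaka expansion of $\,V(G(\cdot), H(\cdot))\,$ and does not alter the sign of the drift inequality. Dynkin's formula then yields $\,\mathbb E_{(g,h)} [\sigma_K ] \le V(g,h)/\varepsilon\,$ for the hitting time of an appropriate compact sublevel set $K$, establishing positive recurrence.

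For existence and uniqueness of the invariant probability $\,{\bm \pi}\,$, and convergence of time-marginals, I would combine the drift inequality above with a Harris-type irreducibility argument. Because $(G, H)$ is driven by a single Brownian motion, standard non-degenerate density arguments are unavailable, and I would instead invoke a Stroock--Varadhan-style support/controllability argument, showing that from any interior starting point an appropriate deterministic path of $\,W\,$ drives $(G, H)$, through the Skorokhod map $\,\Phi_{\mathcal R}\,$, into any prescribed open neighborhood of any interior target. Combined with the Feller property (inherited from the Lipschitz continuity of $\,\Phi_{\mathcal R}\,$) and the Foster--Lyapunov inequality, standard Meyn--Tweedie theory then delivers a unique invariant probability $\,{\bm \pi}\,$ supported in $(0, \infty)^2$ (the boundary charge being ruled out by (\ref{9})), and the drift condition plus petite-set property upgrades convergence of time-marginals to $\,{\bm \pi}\,$ to hold in total variation.

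The main obstacle is the interplay between degeneracy and the $-1/2$ reflection coefficients: the former rules out any direct appeal to non-degenerate SRBM theory of Harrison--Reiman/Williams, and the latter forces $V$ to be genuinely non-linear. Verifying that (\ref{ord2}) is precisely the threshold for simultaneously controlling boundary pushes and interior drift in the Ito--Tanaka decomposition of $V(G, H)$ is the technical heart of the argument; a secondary difficulty is the controllability step for irreducibility, which must be argued path-by-path in the degenerate single-driver setting, without recourse to Girsanov or density-based arguments.
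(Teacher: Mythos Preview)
Your overall strategy (Foster--Lyapunov plus irreducibility plus Meyn--Tweedie) is sensible, but the specific Lyapunov function you propose does \emph{not} work under (\ref{ord2}) as stated, only under the strictly stronger ordering (\ref{ord}). For a mollified $V=\max(c_1 g,\, c_2 h)$ the interior drift is $c_1(\delta_1-\delta_2)$ in the sector $\{c_1 g > c_2 h\}$ and $c_2(\delta_2-\delta_3)$ in the other; forcing both to be negative is exactly $\delta_1<\delta_2<\delta_3$. The paper explicitly exhibits $(\delta_1,\delta_2,\delta_3)=(1/3,\,0,\,1)$ as satisfying (\ref{ord2}) but not (\ref{ord}); for this choice the interior drift of your $V$ is strictly \emph{positive} throughout the sector $\{c_1 g>c_2 h\}$ regardless of $c_1,c_2>0$. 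Your claim that ``the content of (\ref{ord2}) is precisely that such $(c_1,c_2)$ exist'' is therefore false: (\ref{ord2}) encodes \emph{effective} drifts that already account for boundary pushing, and a two-piece max cannot capture that compensation in the interior.

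The paper proceeds quite differently. It adapts the Hobson--Rogers hitting-time argument directly, replacing the second Brownian driver by $-W$; the non-trivial new ingredient is a uniform lower bound on the probability that $(G,H)$, started on one axis, reaches a small neighbourhood of the corner before wandering off, and this is obtained by explicit pathwise estimates exploiting the key degenerate feature you did not use: $G+H$ is of \emph{finite first variation} (the $\pm W$ terms cancel), so its evolution until the next boundary hit is deterministic up to the local-time input. For uniqueness of $\bm\pi$ and convergence of marginals the paper does not build a petite-set argument from scratch but observes that $\mathfrak{Z}(\cdot)$ in (\ref{Z}) is a (degenerate) L\'evy process and invokes the Kella--Ramasubramanian result for Skorokhod reflections of such inputs, after checking that (\ref{ord2}) implies $-\mathcal R^{-1}\mathbb E[\mathfrak Z(1)]$ has strictly positive entries.

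If you want to salvage a Lyapunov approach under the full (\ref{ord2}), the right object is not $\max(c_1 g, c_2 h)$ but a function of the quadratic form $g^2+gh+h^2$: the paper shows in (\ref{NoLocTimes}) that $\mathrm{d}(G^2+GH+H^2)$ carries \emph{no} local-time terms and has drift $1-\tfrac32(\lambda_1 G+\lambda_2 H)$, negative for large $(G,H)$ precisely when $\lambda_1,\lambda_2>0$, which (\ref{ord2}) guarantees. This is exactly the mechanism behind Proposition~\ref{Lyap_func}, which then upgrades convergence to exponential in total variation.
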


  Let us note that the condition (\ref{ord2}) is a simple recasting of (\ref{eq: P-R}). It is  satisfied in the special case of (\ref{4}); and more generally, under the strict ordering 
 \begin{equation}
\label{ord}
\delta_1\,<\, \delta_2\,<\, \delta_3 \,.
\end{equation}
 This last requirement 
 is strictly stronger than (\ref{ord2}); for example, the choices $\,\delta_{1} = 1\, /\, 3\,$, $\,\delta_{2} = 0\,$, $\,\delta_{3} = 1\,$ 
satisfy (\ref{ord2}) but    not   (\ref{ord}). We note  also   that (\ref{ord2}) implies $\,\delta_{1} < \delta_{3}\,$, as well as at least one of  $\,\delta_{2} > \delta_{1}\,$, $\,\delta_{3} > \delta_{2}\,$; that is, (\ref{ord2}) excludes the possibility   $\,\delta_{3} \le \delta_{2} \le \delta_{1}\,$. These claims are discussed in detail in   Remark \ref{comp}.

\smallskip
\noindent
{\it Proof:} We start with a remark on the positivity of the transition probability $$\,p^{t}(y, A)  :=  \mathbb P \big( (G(t), H(t)) \in A \, \big \vert (G(0), H(0)\big) = y ) \, ,  \qquad \,y = (y_{1}, y_{2}) \in (0, \infty)^{2} ,   ~~\,A \in \mathcal B  \big((0, \infty)^{2}\big) .$$ We recall that $\,G(\cdot) + H(\cdot) \,$ is of finite first variation,    and decreases monotonically until $\,(G(\cdot), H(\cdot))\,$ hits one of the edges, i.e., $\, {\mathrm d} (G(t) + H(t)) = - (\delta_{3} - \delta_{1}) {\mathrm d} t + (1/2) ({\mathrm d} L^{G}(t) + {\mathrm d} L^{H}(t))\,$, $\, t \ge 0 \,.$ 

Consider now a trapezoid $\,\mathcal T_{a,b} := \{ (x, y) \in [0, \infty)^{2} : - x + a \le  y \le - x + b \} \,$  whose intersection with 
$\,A\,$     has positive \textsc{Lebesgue} measure    $\, \text{Leb} (\mathcal T_{a,b} \cap A ) > 0  ,$ for some $  a, b > 0  $. Two cases arise: 

\noindent {\it (i)} \,
If $\,y_{1} + y_{2} \ge b\,$, then the point $\,y\,$ is located in the north-east of $\,\mathcal T_{a,b}\,$,  and we see that    
 \begin{equation}
\label{posit}
p^{t}(y, A) \ge p^{t}(y, A \cap \mathcal T_{a,b}) > 0 
 \end{equation}
 is valid for every $t \ge (y_{1} + y_{2} - a) \, /\, (\delta_{3} - \delta_{1}) ,$ by considering the paths which do not touch the edges. 
 
 \noindent {\it (ii)} \,
 If $\,y_{1}+ y_{2} < b \,$, then considering the paths for which either $ L^{G}(\cdot)  $ or $ L^{H} (\cdot)  $ exhibit an increase and invoking the \textsc{Markov}  property, we see that (\ref{posit}) holds  for every $\,t > 0\,$. 
 
 In either case, we deduce that for every $\,y \in (0, \infty)^{2}\,$, $\, A \in \mathcal B ( (0, \infty)^{2})\,$ with positive \textsc{Lebesgue} measure $\,\text{Leb} (A) > 0 \,$, there exists a positive real number $\,t^{\ast}\,$  such that $\, p^{t}(y, A) > 0 \, $ holds for every $\,t \ge t^{\ast}.$ This shows that the skeleton \textsc{Markov} chain of the process $\,(G(\cdot), H(\cdot)) \,$ is irreducible. 

\smallskip 
\noindent
$\bullet~$ 
Let us define now, inductively, two sequences of stopping times $\,\tau:= \tau_{1} = \inf \{ s \ge 0 : G(s) = 0\}\,$, $\,\sigma := \sigma_{1} = \inf \{ s \ge \tau : H(s) = 0\}\,$, $\,\tau_{n}:= \inf \{ s \ge \sigma_{n-1} : G(s) = 0\}\,$, $\,\sigma_{n} := \inf\{ s \ge \tau_{n}: H(s) = 0\}\,$ for $\,n=2,3,\ldots\,$. 
Also let us define $\,{\bf T}_{0} := \inf \{ s \ge 0: G(s) H(s) = 0\}\,$ and 
\[
{\bf T}_{\dagger} := \inf \{ s \ge 0: G(s) \le x_{0}, H(s) = 0\} \, , 
\qquad 
{\bf T}_{r} := \inf \{ s \ge 0: (G(s), H(s)) \in \mathrm{B}_{0}(r) \} \, , 
\]
where $\,\mathrm{B}_{0}(r)\,$ is the ball of radius $\,r > 0\,$ centered at the origin. 

Most  of the arguments in \citep{MR1198420} 
carry over   smoothly to the degenerate system (\ref{G})-(\ref{H}). In fact, we can replace $\,B (\cdot)\,$ by $\,-W(\cdot)\,$ in the proof of  Propositions 2.1-2.2 of \citep{MR1198420}, 
and deduce that, under (\ref{ord2}),  there exists a large enough $\,x_{0} > 0 \,$ such that for $\,x_{1} - x_{2} \ge x_{0}\,$ we have 
\[
\mathbb E^{(x_{1}-x_{2}, 0)} [ G(\sigma_{1}) ] \le (x_{1} - x_{2})/2 \, , \qquad \mathbb E^{(x_{1}-x_{2}, 0)} [ \sigma_{1} ] \le 2 C (x_{1} - x_{2}) \, , 
\]
where $\,C\,$ is some positive constant. Moreover, again replacing $\,B (\cdot)\,$ by $\,-W(\cdot)\,$  in the first part of the proof of Proposition 2.3 of \citep{MR1198420},   
we deduce   
$\, \mathbb E [ {\bf T}_{\dagger}] \, \le \, C ( 1 + \sqrt{(x_{1}-x_{2})^{2} + (x_{2} - x_{3})^{2}\,} \,) \, $.  
We claim that there exists a constant $\, {\bm \delta} > 0\,$ such that a uniform estimate 
\begin{equation} \label{eq: unif est}
\inf_{0 < y \le x_{0}} \mathbb P^{(y, 0)} \Big( {\bf T}_{\varepsilon} \le {\bf T}_{2x_{0}} \wedge 1 \Big) \ge {\bm \delta} > 0 \, 
\end{equation}
holds; once  (\ref{eq: unif est}) has been established,   positive recurrence under the condition (\ref{ord2}) will follow. 

Instead of showing (\ref{eq: unif est}),  we shall argue under the condition (\ref{ord2}) that for every $\, 0 < \varepsilon < x_{0}\,$, there exists a positive constant $\, {\bm \delta} > 0\,$ such that 
\begin{equation} 
\label{eq: inf-low-bnd}
\inf_{\varepsilon < y \le x_{0}} \mathbb P^{(y,0)} \Big( \widetilde{\bf T}_{\varepsilon} \le \widetilde{\bf T}_{2x_{0}} \wedge {\bf t}_{0}(y) \Big) \, \ge \, {\bm \delta} > 0 \, , 
\end{equation}
where $\,{\bf t}_{0}(y) := (y-(5/6)\varepsilon) \, / \, (\delta_{3} - \delta_{1} - (1/2)(\delta_{3} - \delta_{2})^{+}) > 0\,$, $\,\varepsilon < y \le x_{0}\,$ and 
\[
\widetilde{\bf T}_{r} := \inf \{ s \ge 0 : G(s) + H(s) = r\} \, , \quad r \ge 0 \, . 
\]

In fact, we shall evaluate the smaller probability $\, \inf_{\varepsilon < y \le x_{0}} \mathbb P^{(y,0)} \big( \widetilde{\bf T}_{\varepsilon} \le \widetilde{\bf T}_{2x_{0}} \wedge {\bf t}_{0}(y)\, ,  \widetilde{\bf T}_{\varepsilon} < \tau \big)  , $  where we recall $\,\tau:= \inf \{ s> 0: G(s) = 0\}\,;$  that is, the probability that the process $\,(G(\cdot), H(\cdot))\,$, starting from the point $\,(G(0), H(0)) = (y,0) \,$  on the axis, with $y  \in (0, x_{0}],$ reaches the neighborhood of the origin  before going away from the origin and before attaining the other axis.

We argue   as follows: The process $\,(G(\cdot), H(\cdot))\,$ does not accumulate any local time $\,A(\cdot)\,$  before  
$\,   \tau\,$: 
\[
0 < G(t) \, =\,  y - (\delta_{2}- \delta_{1}) t - W(t) - (1/2) \Gamma(t) \, , \qquad 
0 \le H(t) \, =\, - (\delta_{3} - \delta_{2}) t + W(t) + \Gamma(t) \, ,  
\]
and consequently  $\, G(t) + H(t) \, =\, y - (\delta_{3}-\delta_{1}) t + (1/2) \Gamma(t) \,$, for $\, 0 \le t \le \tau \, $. From the \textsc{Skohokhod} construction, we obtain the upper bound  
\[
\Gamma(t) \, =\, \max_{0\le s \le t }\big( - W(s) + (\delta_{3} - \delta_{2}) s \big)^{+} \le \max_{0 \le s \le t} (-W(s))^{+} + (\delta_{3} - \delta_{2})^{+} t\, , \quad 0 \le t \le \tau \,  
\]
for the local time $\,\Gamma(\cdot)\,$. Thus we obtain 
\begin{equation} \label{eq: G-lower}
G(t) \ge y - \Big( \delta_{2} - \delta_{1} + \frac{1}{\, 2\, } (\delta_{3} - \delta_{2})^{+} \Big) t - W(t) - \frac{1}{\,2\,} \max_{0 \le s \le t} \big(-W(s)\big)^{+} \, , 
\end{equation}
\begin{equation} \label{eq: G+H-upper}
G(t) + H(t) \le y - \Big( \delta_{3} - \delta_{1} - \frac{1}{\, 2\, } (\delta_{3} - \delta_{2})^{+}\Big)  t + \frac{1}{\, 2\, } \max_{0 \le s \le t} \big(- W(s)\big)^{+} \, ; \quad 0 \le t \le \tau \, . 
\end{equation}

Now let us consider the event 
$$\, 
A(y): \,= \,\Big\{  \omega  \in \Omega : \max_{0 \le s \le {\bm t}_0(y)} \lvert W(s, \omega)\rvert \le \varepsilon \, / \, 3 \Big\}\, , \qquad \,\varepsilon < y \le x_{0}\,.
$$ 
Since $\,\delta_{3} - \delta_{1} - (1/2) (\delta_{3} - \delta_{2})^{+} \le \delta_{2} - \delta_{1} + (1/2) (\delta_{3}-\delta_{2})^{+}\,$,    for every $\,\omega \in A(y)\,$ we obtain 
\[
\min_{0 \le t \le {\bm t}_{0}(y)} \Big[ y - \Big( \delta_{2} - \delta_{1} + \frac{1}{\, 2\, } (\delta_{3} - \delta_{2})^{+} \Big) t - W(t,\omega) - \frac{1}{2} \max_{0 \le s \le t} \big(-W(s,\omega)\big)^{+} \Big] \ge \frac{\varepsilon}{\, 3\, } > 0 \, , 
\]
  hence, combining with (\ref{eq: G-lower}), we obtain $\, A(y) \subset \{ {\bf t}_{0}(y) < \tau\} \,$. Moreover, for every $\,\omega \in A(y) \,$ we have 
\[
\min_{0 \le t \le {\bf t}_{0}(y)} \big( G(t,\omega) + H(t,\omega) \big)
\le \min_{0 \le t \le {\bf t}_{0}(y)} \Big[ y - \Big( \delta_{3} - \delta_{1} - \frac{1}{\, 2\, } (\delta_{3} - \delta_{2})^{+}\Big)  t + \frac{1}{\, 2\, } \max_{0 \le s \le t} \big(- W(s,\omega)\big)^{+} \Big] \le \varepsilon \, , 
\]
thus also  
$
\max_{0 \le t \le {\bf t}_{0}(y)} \big(G(t,\omega) + H(t,\omega)\big)  
\le x_{0} + \varepsilon < 2 x_{0} \, . 
$ 

\smallskip
We deduce the set-inclusion $\,A(y) \subset \{ \widetilde{\bf T}_{\varepsilon} \le  \widetilde{\bf T}_{2x_{0}} \wedge {\bf t}_{0}(y), \widetilde{\bf T}_{\varepsilon} < \tau \}  ,$  so the reflection principle  for Brownian motion gives 
\[
\inf_{\varepsilon < y \le x_{0}} \mathbb P^{(y,0)} ( \widetilde{\bf T}_{\varepsilon} \le \widetilde{\bf T}_{2x_{0}} \wedge {\bf t}_{0}(y) ) \ge \inf_{\varepsilon < y \le x_{0}} \mathbb P^{(y,0)} ( \widetilde{\bf T}_{\varepsilon} \le \widetilde{\bf T}_{2x_{0}} \wedge {\bf t}_{0}(y) , \widetilde{\bf T}_{\varepsilon} < \tau) \]
\[
\ge \inf_{\varepsilon < y \le x_{0}} \mathbb P^{(y,0)}(A(y)) 
\ge 1 - \Big(\frac{{\bf t}_{0}(x_{0})}{\, 2\pi\, }\Big)^{1/2} \cdot \frac{4}{\, \varepsilon / 3\, } \cdot \exp \Big( - \frac{ (\varepsilon / 3)^{2}}{2 {\bf t}_{0}(x_{0})} \Big) \,  
\] 
\noindent
(cf. Problem 2.8.2 of \citep{MR1121940}
). Selecting $\varepsilon \in (0,1)$ small enough so that   this right-most expression is positive, and denoting it by $\,{\bm \delta} > 0 ,$  we obtain (\ref{eq: inf-low-bnd}).  We appeal now to the second half   of the proof of 
Proposition 2.3 in \citep{MR1198420}, 
page 393, and conclude that the system (\ref{G})-(\ref{H}) is  positive-recurrent for neighborhoods,  under (\ref{ord2}). %
 
\smallskip
For the remaining claims of the Theorem, let us recall the equations of (\ref{G})-(\ref{H}) written in the \textsc{Harrison \& Reiman} \citep{MR606992} 
form (\ref{eq: mfrakQ})-(\ref{mA}), and note that the process $\, \mathfrak{Z} (\cdot) \,$ of (\ref{Z}) has independent, stationary increments with $\, \mathfrak{Z} (t) =0\,$ and $\,\mathbb{E}\big|  \mathfrak{Z} (1) \big| < \infty\,$. Now, as   is relatively easy to verify (and  shown in Remark \ref{comp}),   the conditions of (\ref{ord2}) imply that the components of the vector
\begin{equation} 
\label{ord4}
- \,\mathcal{R}^{-1}\, \mathbb{E}\big(  \mathfrak{Z} (1) \big) \,=\, {2 \over \,3\,} \begin{pmatrix}
      2 & 1   \\
     1  &  2
\end{pmatrix} \begin{pmatrix}
      \delta_2 - \delta_1    \\
      \delta_3 - \delta_2  
\end{pmatrix} \,=\, {2 \over \,3\,} \begin{pmatrix}
       \delta_2 + \delta_3 - 2 \delta_1   \\
         2 \delta_3  - \delta_1 - \delta_2
\end{pmatrix} \, =:\, \begin{pmatrix}
      \lambda_1    \\
      \lambda_2  
\end{pmatrix} \,=\, {\bm \lambda}  
\end{equation} 
are both strictly positive; cf.$\,$(\ref{ord3}) below. Then Corollary 2.1 in \citep{MR2931282} 
implies that   the planar process   $\,\mathfrak{G} (\cdot) =\big(G( \cdot), H(\cdot)\big) \,$   is positive recurrent, has a unique invariant probability measure $\, {\bm \pi}  ,$  and converges to this measure in distribution as $\, t \ra \infty\,$.  In addition, for any bounded, measurable function  $ f: [0,\infty)^2 \ra \R $   we have the ergodic behavior (strong law of large numbers)
$$
\lim_{T \ra \infty} { 1 \over \,T\,} \int_0^T f \big( G(t), H(t) \big) \, \mathrm{d} t \,=\, \int_{[0,\infty)^2} f(g,h)\,   {\bm \pi} ( \mathrm{d}g, \mathrm{d}h)\,, \quad \text{a.e.}
$$
 The claim  $\, {\bm \pi} \big( (0,\infty)^2\big) =1\,$ follows now from     (\ref{9}) .  \qed

\begin{prop}
{Under the conditions of (\ref{ord2}),  the local times accumulated at the origin by the ``gap" processes $\, G (\cdot)\, $ and $\, H(\cdot)\,$ satisfy in the notation of (\ref{ord4}) the  strong laws of large numbers} 
\begin{equation}
\label{SLLN}
\lim_{t \ar} \frac{\,L^G (t)\,}{t} \, =\,\lambda_1  \,,\qquad  ~~~~~\lim_{t \ar} \frac{\,L^H (t)\,}{t}  \, =\,\lambda_2\,, \qquad a.e.
   \end{equation}
\end{prop}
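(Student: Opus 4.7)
The natural approach is to divide the vector semimartingale representation \eqref{eq: mfrakQ} by $t$ and pass to the limit:
\[
t^{-1}\mathfrak{G}(t) \,=\, t^{-1}\mathfrak{g} \,+\, t^{-1}\mathfrak{Z}(t) \,+\, \mathcal{R}\cdot t^{-1}\mathfrak{L}(t).
\]
The first right-hand term vanishes deterministically, and the second converges a.s.\ to $\bm{m}=(\delta_1-\delta_2,\delta_2-\delta_3)^{\prime} = \mathbb{E}[\mathfrak Z(1)]$ by the SLLN for the scalar Brownian motion $W(\cdot)$. Since $\mathcal{R}$ is invertible with $\mathcal{R}^{-1}$ displayed explicitly in \eqref{ord4}, this reduces the theorem to the joint a.s.\ claim $G(t)/t\to 0$, $H(t)/t\to 0$: given these, $\mathcal{R}\cdot t^{-1}\mathfrak{L}(t)\to -\bm{m}$ a.s., hence $t^{-1}\mathfrak{L}(t)\to -\mathcal{R}^{-1}\bm{m}=\bm{\lambda}$, which is exactly \eqref{SLLN}.

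The crux is therefore to prove the a.s.\ limit $(G(t)+H(t))/t\to 0$. A distinctive feature of this degenerate setting, immediate from \eqref{RX1} (or equivalently from \eqref{LGA+LHG}), is that $G+H=R_1-R_3$ has vanishing martingale part:
\[
G(t)+H(t) \,=\, (x_1-x_3) + (\delta_1-\delta_3)\,t + \tfrac{1}{2}\bigl(L^G(t)+L^H(t)\bigr),
\]
so $G+H$ is continuous and of locally finite variation, decreasing at deterministic rate $\delta_3-\delta_1>0$ in the interior of the orthant (positivity of this rate is built into \eqref{ord2}; cf.\ Remark \ref{comp}) and growing only through the boundary local times. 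By Theorem \ref{prop: 4.1} the process $(G,H)$ is positive recurrent with invariant measure $\bm{\pi}$ on $(0,\infty)^2$, so $(G(t)+H(t))/t\to 0$ holds in probability. I would promote this to the almost-sure statement by invoking the standard exponential integrability of the invariant distribution of a positive-recurrent SRBM under the Harrison--Reiman condition (in the spirit of Budhiraja--Lee or Dupuis--Williams): this yields $\bm\pi(\{g+h>M\})\le C e^{-\alpha M}$ for some $\alpha,C>0$, after which a Borel--Cantelli argument applied to $\sup_{n\le t\le n+1}(G(t)+H(t))$ forces $(G+H)(t)=O(\log t)$ a.s. An alternative route specific to this degenerate setting exploits the finite-variation structure above directly: any excursion of $G+H$ above level $c$ drains at linear rate $\delta_3-\delta_1$ between boundary visits, while the frequency and magnitude of boundary-driven upward contributions of $L^G+L^H$ are controlled by $\bm{\pi}$.

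With $G(t)/t,H(t)/t\to 0$ a.s.\ secured, division of \eqref{G} and \eqref{H} by $t$ and passage to the limit yield the pair of scalar identities
\[
\lim_{t\to\infty} t^{-1}\bigl(L^G(t)-\tfrac{1}{2}L^H(t)\bigr)=\delta_2-\delta_1,\qquad \lim_{t\to\infty} t^{-1}\bigl(L^H(t)-\tfrac{1}{2}L^G(t)\bigr)=\delta_3-\delta_2,
\]
almost surely; inverting this $2\times 2$ linear system reproduces precisely the constants $\lambda_1=(2/3)(\delta_2+\delta_3-2\delta_1)$ and $\lambda_2=(2/3)(2\delta_3-\delta_1-\delta_2)$ of \eqref{ord4}. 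The main obstacle throughout is the almost-sure upgrade $(G+H)(t)/t\to 0$: positive recurrence alone supplies only convergence in probability, so either moment estimates on $\bm\pi$ or a delicate excursion-theoretic argument leveraging the finite-variation structure of $G+H$ is required to rule out rare, super-linearly growing excursions.
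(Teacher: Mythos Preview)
Your reduction is exactly right and matches the paper's: divide \eqref{eq: mfrakQ} by $t$, use the SLLN for $W(\cdot)$, and reduce everything to $G(t)/t\to 0$, $H(t)/t\to 0$ almost surely. You also correctly identify this a.s.\ upgrade as the only real obstacle, since positive recurrence (Theorem~\ref{prop: 4.1}) delivers only convergence in probability.

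Where you and the paper diverge is in how to close this gap. Your plan---exponential tails for ${\bm \pi}$ plus Borel--Cantelli---can in principle be made to work (indeed Proposition~\ref{Lyap_func}, proved \emph{later}, supplies a Lyapunov function), but it is heavier than needed, and in this \emph{degenerate} SRBM the ``standard'' Dupuis--Williams or Budhiraja--Lee machinery you invoke does not apply off the shelf, since those results assume a nondegenerate covariance. The paper instead inverts the logical order: it first obtains \eqref{SLLN} \emph{in probability} from \eqref{stab}-in-probability exactly as you describe, then extracts a.e.-convergent subsequences, and finally appeals to a general ergodic theorem for additive functionals of recurrent Markov processes (Theorem~II.2 of \textsc{Az\'ema--Kaplan-Duflo--Revuz} \cite{MR222955}; see also \cite{MR912049}, \S\S7--8), which guarantees that the limits $\lim_{t\to\infty}L^G(t)/t$ and $\lim_{t\to\infty}L^H(t)/t$ \emph{exist} a.e.\ (a priori as possibly random variables). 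Existence of the a.e.\ limit, combined with its identification along a subsequence, forces the a.e.\ limit to equal $\lambda_1,\lambda_2$. The a.s.\ convergence $G(t)/t\to 0$, $H(t)/t\to 0$ then drops out as a \emph{corollary}, rather than being the prerequisite you make it.

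So the gap in your argument is precisely the one you flagged: you do not actually establish the a.s.\ upgrade, only sketch two routes to it, neither complete in this degenerate setting. The paper's device---a.e.\ existence of Ces\`aro limits of additive functionals, then identification via convergence in probability---is the missing idea that sidesteps the obstacle entirely.
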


\noindent
{\it Proof:}    
As we just argued, under the condition (\ref{ord2})    the two-dimensional process $\, (G(\cdot), H(\cdot))\,$ of gaps has a unique invariant probability measure $\, {\bm \pi}\,$ on $\, {\cal B} \big( (0,\infty)^2\big) \,$, to which it converges in distribution. This implies, {\it a fortiori}, that  
\begin{equation}
\label{stab}
\lim_{t \ar} \frac{ \,G (t)\,}{t}\, =\,0 \qquad \hbox{ and } \qquad  \lim_{t \ar} \frac{ \,H (t)\,}{t}\, =\,0
\end{equation}
hold  in distribution, thus also in probability. Back into (\ref{SkorU1}), (\ref{SkorV1}) and in conjunction with the  law of large numbers for the Brownian motion $\, W(\cdot)\,$, these observations give that 
\begin{equation}
\label{SLLN2}
\lim_{t \ar} \frac{\,2\,L^G (t) -  L^H (t)\,}{2\,t} \,=\,  \delta_2 - \delta_1  \,, \qquad \lim_{t \ar} \frac{\,2\, L^H (t) - L^G (t)\,}{2\,t}  \,=\,  \delta_3 -   \delta_2    
\end{equation}
hold in probability, and thus the same is true of (\ref{SLLN}). There exist then   sequences $\, \{ t_k \}_{k \in \N} \subset (0, \infty) \,$ and  $\, \{ \tau_k \}_{k \in \N}  \subset (0, \infty) \,$  which increase strictly to infinity, and  along which we have, a.e., 
$$
\lim_{k \ar} \frac{\,L^G (\tau_k)\,}{\tau_k}  \,=\,{ 2 \over \,3\,} \big( \delta_2 + \delta_3 - 2\, \delta_1  \big)\,, \qquad \lim_{k \ar} \frac{\,L^H (t_k)\,}{t_k} \,=\,{ 2 \over \,3\,} \big( 2\, \delta_3 - \delta_1 - \delta_2 \big).
$$
 Theorem II.2 in \citep{MR222955} 
 gives that   $\, \lim_{t \ar} $  $\big( L^G (t)/ t\big)\,$ and $\, \lim_{t \ar} \big( L^H (t)/ t\big)\,$ do exist almost everywhere 
 (see also  \citep{MR912049}, 
 sections 7, 8).  It follows   that the limits in (\ref{SLLN}) are valid not just in probability, but also  almost everywhere; 
  the same is true then for those of (\ref{stab}).   \qed

\begin{figure}
\begin{minipage}{0.45\linewidth}
\includegraphics[scale=0.45]{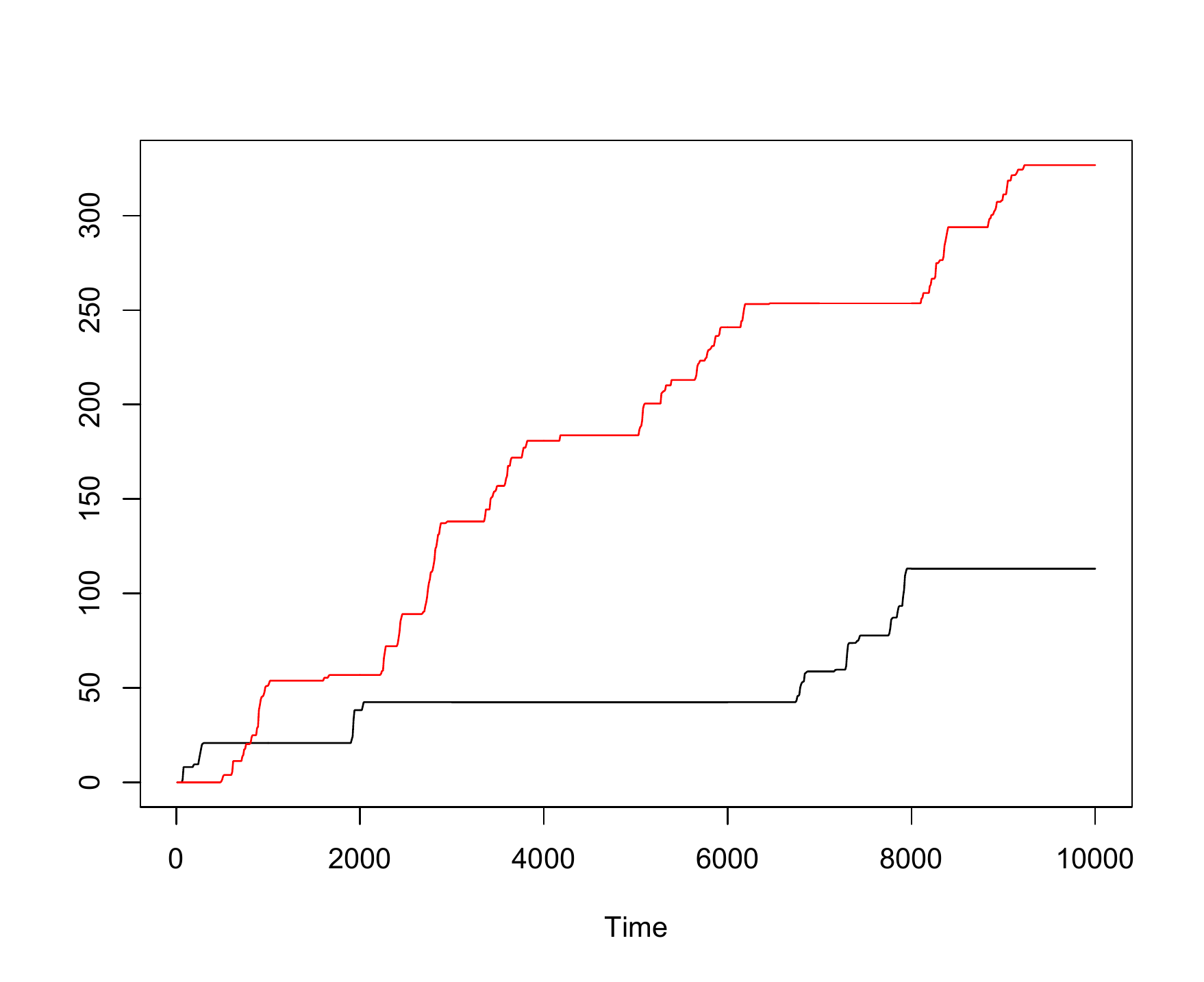}
\end{minipage}	
\begin{minipage}{0.09\linewidth}
\hspace{1cm}
\end{minipage}
\begin{minipage}{0.45\linewidth}
\includegraphics[scale=0.40]{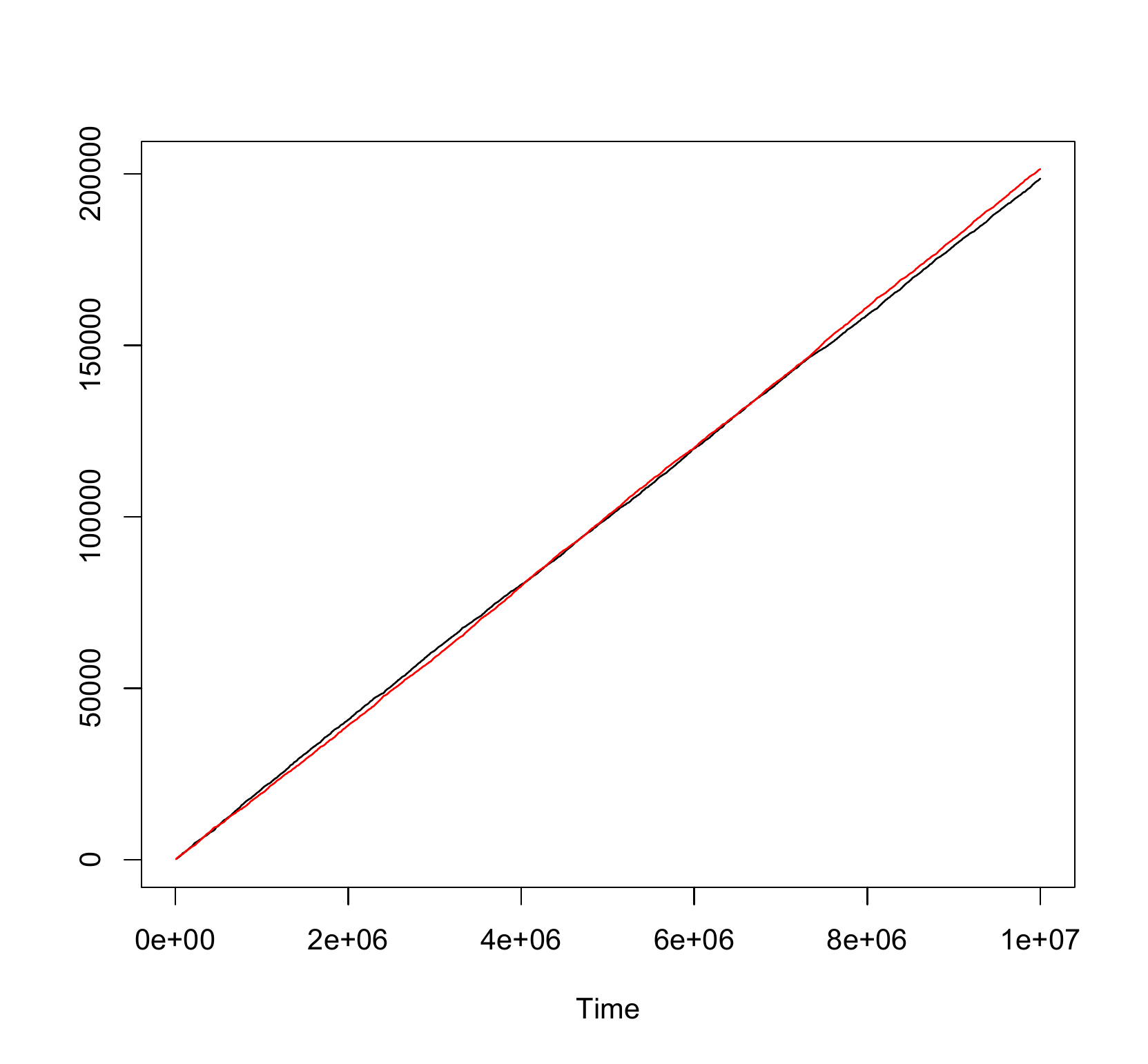}
\end{minipage}	
\caption{Simulated local times $\,L^{G}(\cdot)\,$ (black) and $\,L^{H}(\cdot)\,$ (red) for a short time (left panel) and for a long time (right panel) with $\,\delta_{1} \, =\, 0.01\,$, $\,\delta_{2} \, =\, 0.02\,$, $\,\delta_{3} \, =\, 0.03\,$, thus $\, \lambda_1 =\lambda_{2} \, =\, 0.02\,$. The long-term growth rates converge  in the manner of (\ref{SLLN}), as the time-horizon increases; whereas over short time horizons, the \textsc{Cantor}-function-like nature of local time becomes quite evident.} \label{fig: LT}
\end{figure}

We note that both limits in (\ref{SLLN}) are equal to $ 2  \gamma $ in the special case (\ref{4}); and as a sanity check, we verify in Remark \ref{comp} that both these limits are   strictly positive under the conditions of (\ref{ord2}).  Let us also note that the range of the particles' configuration is a process of finite variation:
$$
R^X_1 (t) - R^X_3 (t)\,=\, G(t) + H(t) \,=\, x_1 - x_3 - \big( \delta_3 - \delta_1 \big)\, t \,+ {1 \over \,2\,} \big( L^G (t) + L^H (t) \big)\,, \quad 0 \le t < \infty.
$$
 It has a linearly decreasing component 
(with slope $\, \delta_3 - \delta_1 $); and a component which increases in \textsc{Cantor}-function-like fashion (as the sum  of local times), a behavior that can be gleaned very clearly  from   Figure 1. 
It satisfies    $\, \lim_{t \ar} \big( \big(  R^X_1 (t) - R^X_3 (t) \big) \big/ t \,\big) =0\,$     a.e., on the strength of (\ref{stab}).

The long-term   growth rates of the local times $\,(L^{G}(\cdot), L^{H}(\cdot))\,$ in (\ref{SLLN}) are  consistent with simulated local times based on the \textsc{Skorokhod} map in \citep{MR606992}. 
The simulations,  reported in Figure \ref{fig: LT}, demonstrate    the  long-term linear growth of these local times   with the rates of (\ref{SLLN}). 

\begin{rem}
[Discussion of Condition (\ref{ord2}), and a Sanity Check]
 \label{comp}
We note that if $\,\delta_{1} \ge\delta_{2} \ge \delta_{3}\,$, the conditions of  (\ref{ord2}) cannot hold; this is   because we have then
\[
2(\delta_{2} - \delta_{1}) + \big(\delta_{2}-\delta_{3}\big)^{-} = \,2(\delta_{2} - \delta_{1}) \le 0 \, , \qquad  
2(\delta_{3} - \delta_{2}) + \big(\delta_{1}-\delta_{2}\big)^{-} = \,2(\delta_{3} - \delta_{2}) \le 0 \, . 
\]
Thus, under (\ref{ord2}), we have either $\,\delta_{2} > \delta_{1}\,$ or $\,\delta_{3} > \delta_{2}\,$. Only three cases are compatible with (\ref{ord2}): 
\[
(\text{i})\, \,  \delta_{1} < \delta_{2} < \delta_{3} \, , \qquad 
(\text{ii})\, \,  \delta_{2} > \delta_{1} ~\text{ and }~ \delta_{2} \ge \delta_{3} \, , \qquad (\text{iii})\, \,  \delta_{3} > \delta_{2} ~\text{ and }~ \delta_{1} \ge \delta_{2} \, . 
\]
It can be shown  that,  in all these cases, the conditions of (\ref{ord2}) imply 
\begin{equation}
\label{ord3a}
\,\delta_{3} > \delta_{1}\,,
\end{equation}
\begin{equation}
\label{ord3}
2 \delta_{3} - \delta_{1} - \delta_{2} > 0\,, \qquad \,\delta_{2} + \delta_{3} - 2 \delta_{1} > 0\,;
\end{equation}
then, the a.e. limits in (\ref{SLLN}) are positive.

The  inequalities of (\ref{ord3}) imply  both (\ref{ord3a}) and  (\ref{ord2}).  As observed by the referee, the condition \eqref{ord3} has the interpretation that if any partition of $\,\{1, 2, 3\} \,$ into two subsets of consecutive indices, the leftmost group of particles has a larger average drift than the rightmost group; cf.  \cite{MR2473654}, page 2187. 
\end{rem}

\subsubsection{Exponential Convergence}


Elementary stochastic calculus applied to the equations (\ref{SkorU1}),  (\ref{SkorV1}) leads to   dynamics 
\begin{equation}
\label{NoLocTimes}
\dx \big( G^2 (t) + G (t) H(t) + H^2 (t) \big)\,=\, \Big[ \, 1 - { \, 3 \,\over 2} \big( \lambda_1 G(t) + \lambda_2 H(t) \big)   \Big] \dx t + \big( H(t) - G(t) \big) \, \dx W(t) 
\end{equation}
devoid of local time terms.  As   $\, \mathfrak{G} (t)= (G(t), H(t))\,$ approaches the origin, the drift in this expression gets close to 1 and pushes the process away from the origin; on the other hand, when either of the components of the vector $  \mathfrak{G} (t) $ gets very large, there is a strong negative drift in the above expression (\ref{NoLocTimes}), which tends to bring the planar process $\mathfrak{G} (\cdot)$ back toward the origin.  This behavior is consistent with the existence of an invariant probability measure for  $\mathfrak{G} (\cdot)$.

We consider now the   function 
\begin{equation}
\label{Lyap}
 V(g,h) \,:=\, \exp\big\{ \sqrt{ g^2 + g h + h^2\,} \,\big\}\, , ~~~~~ \, (g,h) \in [0, \infty)^2 \setminus (0,0)  .
 \end{equation}
This   is of class ${\cal C}^\infty$ on its domain, and  helps us strengthen the conclusions of Theorem 2.3  as follows.

\begin{prop}  
\label{Lyap_func}
Under the  assumptions of Theorem \ref{prop: 4.1},     the function in (\ref{Lyap})   is \textsc{Lyapunov}   for the process $ \mathfrak{G} (\cdot)= (G(\cdot), H(\cdot))  $ in (\ref{G})--(\ref{H}); i.e., there exist constants $\, a, b, \kappa > 0 \,$ such that      
\begin{equation} 
\label{eq: Lyapunov function}
\mathcal Z (\cdot)   :=  V \big(\mathfrak{G} (\cdot)\big) - V\big(\mathfrak{G} (0)\big) + \int^{\cdot}_{0} \big(   \kappa \cdot V \big(\mathfrak{G} ( t)\big) - b \cdot {\bf 1}_{\mathcal T_{a}} \big(\mathfrak{G} ( t)\big)  \big) {\mathrm d} t 
\end{equation}
is a supermartingale, for $\,\mathcal T_{a} \, :=\, \big\{(g,h) \in [0, \infty)^{2}   : g + h \le a \big\}\,.$     In particular, the time-marginal distributions of the positive-recurrent process  $  \mathfrak{G} (\cdot) $ of gaps between ranks,    converge exponentially fast in total variation to the unique invariant probability measure $  {\bm \pi} $ of the process.  
\end{prop}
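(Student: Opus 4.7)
The plan is to apply Itô's formula to $V(\mathfrak{G}(\cdot))$, extract a Meyn--Tweedie style geometric drift condition $\mathcal{L} V \le - \kappa V + b \cdot \mathbf{1}_{\mathcal{T}_a}$, and then combine this with the irreducibility and small-set structure already uncovered in the proof of Theorem \ref{prop: 4.1} to conclude exponential convergence in total variation to $\bm{\pi}$.

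The first step is to observe that $V$ is chosen precisely so that the reflection terms drop out. Writing $\Phi(g,h) := g^2 + gh + h^2$, we have $\nabla \Phi = (2g+h,\, g+2h)$, and a direct check gives $\nabla \Phi(0,h)\cdot (1, -1/2) = 0$ and $\nabla \Phi(g,0)\cdot (-1/2, 1) = 0$; equivalently, the local-time coefficients in $\dx(G^2 + GH + H^2)$ cancel, producing the clean identity (\ref{NoLocTimes}). Setting $\rho := \sqrt{\Phi}$ and $f(x) := e^{\sqrt{x}}$, Itô applied to $V = f(\Phi)$ therefore gives a semimartingale decomposition without any $\dx L^G$ or $\dx L^H$ term, and on the open quadrant
\begin{equation*}
\frac{\mathcal{L}V(g,h)}{V(g,h)} \,=\, \frac{1}{2\rho}\Big[1 - \tfrac{3}{2}\bigl(\lambda_1 g + \lambda_2 h\bigr)\Big] + \Big(\frac{1}{4\rho^{2}} - \frac{1}{4\rho^{3}}\Big)\frac{(h-g)^{2}}{2}\,.
\end{equation*}

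The second step is the drift estimate. From $g^2 + gh + h^2 \le (g+h)^2$ one gets $g+h \ge \rho$, so under (\ref{ord2}) (which by Remark \ref{comp} makes $\lambda_1, \lambda_2 > 0$) one has $\lambda_1 g + \lambda_2 h \ge \min(\lambda_1,\lambda_2)\,\rho$, while $(h-g)^{2} \le \rho^{2}$. For $\rho$ large the $1/\rho$ and $1/\rho^{3}$ terms are negligible, and the ratio $\mathcal{L}V/V$ is bounded above by a strictly negative constant outside a compact set, provided the pulling drift dominates the diffusion --- if the parameters $\lambda_1,\lambda_2$ are too small for $V$ itself, one rescales to $V_{\alpha}(g,h) := \exp\{\alpha\sqrt{g^{2}+gh+h^{2}}\}$ with $\alpha$ small, since the $\alpha$-linear first-order drift then dominates the $\alpha^{2}$-quadratic diffusion. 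Either way one obtains $a,b,\kappa > 0$ with $\mathcal{L}V(g,h) \le - \kappa V(g,h) + b\cdot \mathbf{1}_{\mathcal{T}_a}(g,h)$ on $[0,\infty)^2 \setminus (0,0)$; since $V \ge 1$ and the drift integrand is locally integrable, the process $\mathcal{Z}(\cdot)$ in (\ref{eq: Lyapunov function}) is a bona-fide supermartingale (and an optional stopping argument at the first hitting time of the origin is unnecessary in view of Proposition \ref{notriple}, which also confirms $\mathfrak{G}(\cdot)$ never reaches $(0,0)$).

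The third step upgrades the supermartingale property to $V$-uniform ergodicity. The irreducibility argument in the proof of Theorem \ref{prop: 4.1} (via the positivity $p^{t}(y,A) > 0$ on trapezoidal target sets) upgrades easily to a Harris-type lower bound showing that every compact $\mathcal{T}_a$ is a petite/small set for the skeleton chain of $\bigl(G(\cdot),H(\cdot)\bigr)$. The geometric drift condition together with petiteness of $\mathcal{T}_a$ triggers the Down--Meyn--Tweedie theorem for continuous-time Markov processes (cf. Meyn \& Tweedie, \emph{Markov Chains and Stochastic Stability}, Theorem 6.1, or Down, Meyn \& Tweedie 1995), yielding the existence of constants $C(g,h) < \infty$ and $\eta > 0$ with $\|p^{t}((g,h),\cdot) - \bm{\pi}\|_{\mathrm{TV}} \le C(g,h)\,e^{-\eta t}$, which is the claimed exponential convergence.

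The main obstacle is the drift inequality on the boundary faces: exactly where $g=0$ or $h=0$, the diffusion coefficient $(h-g)^{2}/\rho^{2}$ saturates at $1$, so the linear pull $-(3/4)\min(\lambda_1,\lambda_2)$ must strictly dominate $1/8$ --- this is the point where the rescaling to $V_{\alpha}$ (or equivalently an auxiliary parameter in $f$) becomes indispensable. A secondary but routine point is verifying that the trapezoidal sets used for the transition-density positivity in Theorem \ref{prop: 4.1} genuinely provide a uniform minorization for the semigroup, so that $\mathcal{T}_a$ qualifies as a small set in the Meyn--Tweedie sense.
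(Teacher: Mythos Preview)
Your proof follows essentially the same route as the paper's: apply It\^o's formula via the identity (\ref{NoLocTimes}) so that the local-time terms cancel, derive a Foster--Lyapunov drift inequality $\mathcal{L}V \le -\kappa V + b\,\mathbf{1}_{\mathcal{T}_a}$, conclude that $\mathcal{Z}(\cdot)$ is a supermartingale (using Proposition~\ref{notriple} to avoid the origin), and then invoke the Meyn--Tweedie machinery together with the irreducibility/petiteness established in the proof of Theorem~\ref{prop: 4.1} to obtain $V$-uniform ergodicity. The paper proceeds identically and even writes down the same expression for $[\mathcal{A}V]$, taking $\kappa = (3/4)\min(\lambda_1,\lambda_2)$ directly without any rescaling.

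Your caution about the boundary faces is well-placed and in fact sharper than the paper's own argument. Along the axis $h=0$ one has $\rho = g$ and $(g-h)^2/\rho^2 = 1$, so $[\mathcal{A}V]/V \to \tfrac{1}{8} - \tfrac{3}{4}\lambda_1$ as $g \to \infty$; hence the paper's choice of $\kappa$ --- indeed \emph{any} $\kappa>0$ --- fails for the unscaled $V$ whenever $\min(\lambda_1,\lambda_2) \le 1/6$. Your rescaling to $V_\alpha = \exp\{\alpha\sqrt{g^2+gh+h^2}\,\}$ with small $\alpha$ is the correct repair, since the diffusion contribution scales like $\alpha^2/8$ while the pulling drift scales like $-(3\alpha/4)\min(\lambda_1,\lambda_2)$. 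Strictly speaking this proves that \emph{some} $V_\alpha$ is Lyapunov rather than the specific $V$ of (\ref{Lyap}), but the exponential-convergence conclusion is unaffected.
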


\noindent {\it Proof:} Applying \textsc{It\^o}'s formula to $\,V(\mathfrak{G} ( \cdot)) \,$ in conjunction with \eqref{NoLocTimes}, we obtain the semimartingale decomposition $\,V(\mathfrak{G} ( t))   =  V(\mathfrak{G} (0)) + M^{V}(t) + A^{V}(t) \,$, where 
\begin{equation*}
\begin{split}
M^{V}(t)   :=  & \int^{t}_{0} \frac{\, V(\mathfrak{G} (s)) \cdot ( G(s)  - H(s)) \,}{\,2 \sqrt{G^{2}(s) + G(s) H(s) + H^{2}(s)} \,} \,{\mathrm d} W(s)   , \quad ~~
A^{V}(t)   :=    \int^{t}_{0} [\mathcal A V] (\mathfrak{G} (s)) \,{\mathrm d} s   , \\
[\mathcal A V](g , h) \, :=\,& \frac{\,V(g, h) \,}{\,2 \sqrt{g^{2}+gh + h^{2}}\,} \Big[ 1 - \frac{\,3\,}{\,2\,} ( \lambda_{1} g + \lambda_{2} h ) \Big]  + \frac{\,V(g, h) ( g- h)^{2}\,}{\,8 (g^{2} + gh + h^{2})\,} - \frac{\,V(g,h) ( g-h)^{2}\,}{\,8 ( g^{2} + gh + h^{2})^{3/2}\,} . 
\end{split}
\end{equation*}
For arbitrary small $\,\varepsilon > 0 \,$ and sufficiently large $\, a > 0 ,$  the drift function $\, [\mathcal A V] (g , h) \,$ satisfies  
\[
[\mathcal A V ] (g, h) \le - \kappa \cdot V(g , h) + b \cdot {\bf 1}_{\mathcal T_{a}}  (g , h)  \, ; \quad (g, h) \in \mathcal T_{\varepsilon, \infty}
\] 
for some $\,\kappa \, :=\, (3/4) \min ( \lambda_{1}, \lambda_{2})  > 0  ,$  $\, b \, :=\, \sup_{} \{ V(g, h) ((1/8) + 1 / (2 \sqrt{g^{2} + g h + h^{2}} )) : (g, h) \in \mathcal T_{\varepsilon, a} \}, $  with the trapezoids $\,\mathcal T_{\varepsilon, a} := \{ (x, y) \in [0, \infty)^{2} : - x + \varepsilon \le y \le - x + a \} \,$ and $\, \mathcal T_{\varepsilon, \infty} \, :=\, \{ (x, y) \in [0, \infty)^{2} : - x + \varepsilon \le y \} \,$. Then $\, \mathcal Z (\cdot) \,$ in \eqref{eq: Lyapunov function} is a local supermartingale satisfying $\, \mathcal Z (t) \ge - V(\mathfrak{G} (0) ) - b t \,$ for $\,t \ge 0 \,$,   hence  a supermartingale by \textsc{Fatou}'s lemma.

The function $ V $ of \eqref{Lyap}  and its derivatives are not defined at the origin; but by Proposition \ref{notriple}, the process  $\mathfrak{G} (\cdot) $ does not attain the origin   when started away from it.   Proposition 3.1 in \citep{MR2499863} 
(and its references, as well as Definitions 5, 6 of \citep{MR3687255} 
in the context of SRBM), shows that  $  V   $ is  a \textsc{Lyapunov} function. As in   the proof of Theorem \ref{prop: 4.1}, $\mathfrak{G} (\cdot)$ has an irreducible skeleton chain  and hence, by Theorem 6.1 of \citep{MR1234295}, 
is aperiodic.  We appeal now to the     results in 
\citep{MR3687255} 
(cf.\,
\citep{MR0133871}, 
\citep{MR1234295}, 
\citep{MR1288127}, 
\citep{MR2499863}, 
also \citep{MR2650048}, 
\citep{MR2830609} 
for fluid paths). These  show that   $ \mathfrak{G} (\cdot) $ is positive recurrent,    has a unique invariant distribution, and is $V-$uniformly ergodic. \qed

 \smallskip

\subsection{
Basic Adjoint Relation   (BAR)  and \textsc{Laplace} Transforms
}
 \label{sec: BAR}

{\it Under the condition (\ref{ord2}), can the invariant probability measure $\, \pib\,$ of the two-dimensional process $\, (G(\cdot), H(\cdot))\,$ of gaps   be computed explicitly?}  

We do not know the answer to this question, but will    try to make some progress on it in the present subsection. We shall show that the joint distribution in steady-state for the pair of gaps $\,(G(\cdot), H(\cdot))\,$, cannot be the product of their marginals (Remark \ref{rem: no product form}); and,  in  
what we call the ``symmetric case" \eqref{sym} for this problem, that this joint distribution of gaps is determined by the distribution of their sum $\,G(\cdot) + H(\cdot) \,$ in steady-state. For the latter, and always in the ``symmetric case'', we offer in Remark \ref{conj} a conjecture that involves the Gamma distribution. 

\smallskip
Let us start, then, by observing that for every bounded continuous function $\,f : [0, \infty)^2 \ra \R\,$ of class $\, \mathcal{C}^{2}_{b}\big((0, \infty)^{2}\big) \cap \mathcal{C}^{1}_{b}\big([0, \infty)^{2} \setminus \{ 0\}\big)$, simple stochastic calculus gives 
\[
f(\mathfrak G(T)) - f(\mathfrak G(0)) \, =\, \int^{T}_{0} \nabla f(\mathfrak G(t)) \cdot {\mathrm d} \big(\mathfrak Z(t) + \mathcal R \mathfrak L(t) \big) + \int^{T}_{0} \frac{1}{\, 2\, } \big(D_{gg}^{2} + D_{hh}^{2} - 2 D_{gh}^{2}\big) f (\mathfrak G (t)) {\mathrm d} t 
\]
where the processes $\, \big(\mathfrak G(\cdot), \mathfrak Z(\cdot), \mathfrak L(\cdot)\big)\,$ and the matrix $\,\mathcal R\,$ are defined in (\ref{eq: mfrakQ})-(\ref{mA}). Taking   expectation on both sides,   then integrating with respect to the invariant probability measure $\, {\bm \pi} \,$  for the planar process $\,\mathfrak G(\cdot) \, =\, \big(G(\cdot), H(\cdot)\big)  $, we obtain by \textsc{Fubini}'s theorem for $\, 0 <T < \infty\,$ the equation 
\begin{equation}
\label{preBAR}
0 \, =\, T   \int_0^\infty \int_0^\infty  \Big[\, \frac{1}{\, 2\, }  \big(D_{gg}^{2} + D_{hh}^{2} - 2 D_{gh}^{2}\big) f(g, h) + {\bm m} \cdot \nabla f(g,h) \,\Big]\, {\bm \pi}({\mathrm d} g, {\mathrm d} h) \qquad \qquad \qquad 
\end{equation}
\[
\qquad \qquad {} + \frac{T}{\, 2\, } \bigg( \int_0^\infty \Big( D_{g} - \frac{1}{\, 2\, } D_{h}  \Big) f(0, h)\,  {\bm \nu}_{1}({\mathrm d} h) +  \int_0^\infty \Big( D_{h} - \frac{1}{\, 2\, } D_{g}  \Big) f(g, 0) \, {\bm \nu}_{2}({\mathrm d} g) \bigg).
\]

We have denoted   by $\, {\bm m}  = \big(  \delta_{1} - \delta_{2} \,,  \,\delta_{2} - \delta_{3}\big)^{\prime}\,$   the drift vector of $\,\mathfrak Z(\cdot)\,$ in (\ref{Z}), (\ref{mA}); and by  $\,{\bm \nu}_{1}\,$ (respectively, by $\,{\bm \nu}_{2}\,$)    the $\,\sigma$-finite measure on the axis $\,\{(g , h) \in [0, \infty)^{2} :  g = 0\}\,$ (respectively, on the axis  $\,\{(g , h) \in [0, \infty)^{2} :  h = 0\}\,$) induced by the vector  $\,\mathfrak L(\cdot) = \big( L^G (\cdot), L^H (\cdot) \big)'\,$ of local times under the invariant probability measure $\,{\bm \pi}\,.$  Namely, 
\begin{equation} 
\label{eq: add L}
\mathbb E_{{\bm \pi}}   \int^{T}_{0} f \big(\mathfrak G(t)\big) \,{\mathrm d} {\mathfrak L}(t)  \, =\, \frac{T}{\, 2\, } \, \bigg( \int_0^\infty f(0, h) \, {\bm \nu}_{1}( {\mathrm d} h) \, , \, \int_0^\infty f(g, 0)\, {\bm \nu}_{2}( {\mathrm d} g)  \bigg)^{\prime},  
\end{equation}
 or equivalently
\begin{equation} 
\label{eq: add L_too}
 {\bm \nu}_{1}(A) \,=\,  \mathbb{E}_{{\bm \pi}} \int_0^2 \mathbf{ 1}_{ \{ H(t)\in A\}} \, \dx L^G (t)\,, \qquad   {\bm \nu}_{2}(A) \,=\,  \mathbb{E}_{{\bm \pi}} \int_0^2 \mathbf{ 1}_{ \{ G(t)\in A\}} \, \dx L^H (t)
\end{equation}
 for $\, A \in \mathcal{B} \big( (0, \infty)\big) $; 
see  \citep{MR222955}.  
Dividing both sides of (\ref{preBAR}) by $\,T /   2\,$, we obtain for the invariant probability measure $\,{\bm \pi}\,$ of   
$\, (G(\cdot), H(\cdot))\,$ in (\ref{G})-(\ref{H}) the {\it Basic Adjoint Relation} (BAR)   
\begin{equation} 
\label{eq: BAR}
\int_0^\infty \int_0^\infty \Big( \big(D_{gg}^{2} + D_{hh}^{2} - 2 D_{gh}^{2}\big)    + 2 \big( \delta_1 - \delta_2 \big) D_g + 2 \big( \delta_2 - \delta_3 \big) D_h   \Big) f(g,h)\, {\bm \pi}({\mathrm d} g ,  {\mathrm d} h) +~~~~~~~~~ 
\end{equation}
\[
~~~~~~~~~~~~~~~~~~~~{}+ \int_0^\infty \Big( D_{g} - \frac{1}{\, 2\, } D_{h}  \Big) f(0, h)\, {\bm \nu}_{1}({\mathrm d} h) +  \int_0^\infty \Big( D_{h} - \frac{1}{\, 2\, } D_{g}  \Big) f(g, 0) \, {\bm \nu}_{2}({\mathrm d} g) \, =\, 0  \,.
\]
   This relationship   was 
   studied in detail  for non-degenerate reflected Brownian motions by \citep{MR912049}. 
 As shown in \citep{MR1873294}, 
 a probability measure $\, {\bm \pi}\,$ on $\, {\cal B} \big( (0,\infty)^2 \big)\,$ is invariant for 
 $ \mathfrak G(\cdot) = (G(\cdot), H(\cdot) )  $ of gaps  if it,  together with two finite measures $\, {\bm \nu}_1\,$, $\, {\bm \nu}_2\,$ on $\, {\cal B}  ( (0,\infty)   ) ,$ satisfies the BAR (\ref{eq: BAR}).   

\subsubsection{ \textsc{Laplace} Transforms and Ramifications  }
 \label{sec: Rami}

The Basic Adjoint Relation of (\ref{eq: BAR}) allows us to express  the \textsc{Laplace} transform  $\,  \widehat{{\bm \pi}}   \,$ of the invariant probability measure $\,  {\bm \pi} \,$ in terms of the \textsc{Laplace} transforms $\,    \widehat{{\bm \nu}}_{1} , \,\widehat{{\bm \nu}}_{2}   \,$ of the measures $\,   {\bm \nu}_{1}, \,{\bm \nu}_{2}\,$ in (\ref{eq: add L}), (\ref{eq: add L_too}) on the axes. Indeed, substituting $\,f(g,h) \, =\, \exp ( - \alpha_{1} g - \alpha_{2} h) \,$ into (\ref{eq: BAR}) with $\, \alpha_1 \ge 0\,$, $\, \alpha_2 \ge 0\,$, we see that the \textsc{Laplace} transforms 
\[
\widehat{{\bm \pi}} ({\bm \alpha}) \, :=\, \widehat{{\bm \pi}} (\alpha_{1}, \alpha_{2}) \, =\, \mathbb E_{{\bm \pi}} \Big[ e^{- \alpha_{1} G(t) - \alpha_{2} H(t)} \Big]  \, , \qquad ~~
\widehat{{\bm \nu}}_{i} (\alpha_{j}) \, :=\, \int_0^\infty e^{-\alpha_{j} x}  {\bm \nu}_{i} ({\mathrm d} x)  
\]
for $\,i \neq j \in \{ 1, 2 \} \,$ satisfy the equation 
$$
\Big[ (\alpha_{1} - \alpha_{2})^{2}  + 2 (\delta_{2}-\delta_{1}) \alpha_{1} + 2 (\delta_{3}-\delta_{2}) \alpha_{2}\Big] \,\widehat{{\bm \pi} }(\alpha_1, \alpha_2)\, =\, ~~~~~~~~~~~
$$
\begin{equation} 
\label{eq: Lap0}
~~~~~~~~~\, =\, \Big( \alpha_{1} - \frac{\alpha_{2}}{\, 2\, } \Big) \,\widehat{{\bm \nu}}_{1}  (\alpha_{2}) + \Big( \alpha_{2} - \frac{\alpha_{1}}{\, 2\, } \Big) \,\widehat{{\bm \nu}}_{2}  (\alpha_{1}) \, . 
\end{equation}

\smallskip
\noindent
The following observations,  
in the form of bullets, are consequences of  this last equation (\ref{eq: Lap0}).

 \smallskip
\noindent $\,\bullet\,$ For any pair $\,  (\alpha_{1}, \alpha_{2}) \in [0, \infty)^{2}\, $ that satisfies $\,(\alpha_{1} - \alpha_{2})^{2}  + 2 (\delta_{2}-\delta_{1}) \alpha_{1} + 2 (\delta_{3}-\delta_{2}) \alpha_{2}  \neq 0\,$, the equation (\ref{eq: Lap0}) yields 
\begin{equation}
 \label{eq: Lap} 
\mathbb E_{{\bm \pi}} \Big[ e^{- \alpha_{1} G(t) - \alpha_{2} H(t)} \Big] \, =\,  \frac{(2\alpha_{1} - \alpha_{2}) \widehat{{\bm \nu}}_{1} (\alpha_{2}) + (2\alpha_{2} - \alpha_{1}) \widehat{{\bm \nu}}_{2} (\alpha_{1})}{\, 2 (\alpha_{1} - \alpha_{2})^{2} + 4 (\delta_{2}-\delta_{1}) \alpha_{1} + 4 (\delta_{3}-\delta_{2}) \alpha_{2} \,  } \, =\, \widehat{{\bm \pi}} (\alpha_{1}, \alpha_{2})\, . 
\end{equation}
Consequently, the invariant distribution $\, {\bm \pi}\,$ on $\, (0, \infty)^2\,$ can be obtained from the measures $\, {\bm \nu}_1\,$, $\, {\bm \nu}_2\,$ of (\ref{eq: add L}) on the two axes.

The reverse is also true: Setting $   \alpha_1 = 2 \alpha_2>0 $ (resp., $  \alpha_2 = 2 \alpha_1>0 $) in  \eqref{eq: Lap}, we get respectively, 
$$
\widehat{\nu_1} (\alpha_2) \,=\, \frac{2}{3}\, \Big( \alpha_2 + 2\big( \delta_2  + \delta_3 \big)- 4 \delta_1 \Big) \, \widehat{\pi} \big( 2 \alpha_2, \alpha_2 \big)\,, \qquad \widehat{\nu_2} (\alpha_1) \,=\, \frac{2}{3}\, \Big( \alpha_1 - 2\big( \delta_2  + \delta_1 \big)+ 4 \delta_3 \Big) \, \widehat{\pi} \big(   \alpha_1, 2 \alpha_1 \big)\,.
$$

\noindent $\,\bullet\,$ If $\,(\alpha_{1}, \alpha_{2}) \in [0, \infty)^{2}\,$ lies on the segment of the parabola 
\begin{equation} 
\label{eq: parabola}
(\alpha_{1} - \alpha_{2})^{2} + 2 (\delta_{2}-\delta_{1}) \alpha_{1} + 2 (\delta_{3}-\delta_{2}) \alpha_{2} \, =\,  0\,, 
\end{equation}
then (\ref{eq: Lap0}) yields 
$\, (2\alpha_{1} - \alpha_{2} ) \, \widehat{{\bm \nu}}_{1}  (\alpha_{2}) + (2 \alpha_{2} - \alpha_{1}) \, \widehat{{\bm \nu}}_{2}  (\alpha_{1}) \, =\, 0 \, $. 
Under the conditions of (\ref{ord2}), the segment is non-empty  provided $\,\delta_{2} < \delta_{1} < (\delta_{2} + \delta_{3})/2 < \delta_{3}\,$ or   $\,\delta_{1} < (\delta_{1} + \delta_{2})/2 < \delta_{3} < \delta_{2}\,$. 

On the other hand, under the condition (\ref{ord}), the segment on the parabola  (\ref{eq: parabola}) 
degenerates to the   origin $\, ( \alpha_1, \alpha_2) = (0,0)\,,$ and thus (\ref{eq: Lap}) holds then for every $\,(\alpha_{1}, \alpha_{2})\in  [0, \infty)^{2} \setminus \{ (0,0)\} \,$.   

\smallskip 
\noindent $\,\bullet\,$ Dividing (\ref{eq: Lap0}) by $\,\alpha_{j}>0\,$, then letting $\,\alpha_{j} \uparrow \infty\,$, $\,\,j = 1, 2\,$, we   obtain  
\begin{equation} 
\label{trace}
\lim_{\alpha_{2}  \uparrow \infty} \alpha_{2} \,\widehat{{\bm \pi}}  (\alpha_{1}, \alpha_{2}) \, =\,  \widehat{{\bm \nu}}_{2} (\alpha_{1}) \, , \quad 
\lim_{\alpha_{1}  \uparrow \infty} \alpha_{1} \,\widehat{{\bm \pi}}  (\alpha_{1}, \alpha_{2}) \, =\,  \widehat{{\bm \nu}}_{1} (\alpha_{2}) \, , \qquad (\alpha_{1}, \alpha_{2}) \in [0, \infty)^{2} \, . 
\end{equation}
We deduce that the measures $\, {\bm \nu}_1\,$, $\, {\bm \nu}_2\,$ of (\ref{eq: add L}) are appropriately normalized traces on the two axes, of the invariant probability measure $\, {\bm \pi}\,$.

 \smallskip 
\noindent $\,\bullet\,$
Now, let us take $\,\alpha_{1} \, =\, \alpha_{2} = \alpha > 0 \,$ in (\ref{eq: Lap0}); we see that the \textsc{Laplace} transform of the invariant distribution  for the sum $\, G(\cdot) + H(\cdot)\,$ of the gaps is expressed as  

\begin{equation} 
\label{eq: Lap Sum}
\mathbb E_{\bm \pi} \Big[ e^{-\alpha (G(T) + H(T))} \Big] \, =\, \widehat{{\bm \pi} }(\alpha, \alpha) \, =\, \frac{\, \widehat{{\bm \nu}}_{1} (\alpha) + \widehat{{\bm \nu}}_{2} (\alpha)\,}{ 4(\delta_{3} - \delta_{1}) } \, =\, \frac{\, \widehat{{\bm \nu}}_{1} (\alpha) + \widehat{{\bm \nu}}_{2} (\alpha)\,}{ 2(\lambda_{1} + \lambda_{2}) } \,\, ; \qquad \alpha > 0 \, . 
\end{equation}
Together with Proposition \ref{notriple}, this shows that the measure $\, {\bm \nu}_1+ {\bm \nu}_2\,$ is supported on 
$\, (0, \infty)$.  

Letting $\,\alpha \downarrow 0\,$ in the above equation gives the total mass of the two measures on the axes under the stationary distribution, namely 
\begin{equation} 
\label{eq: bdry mass}
\big(  {\bm \nu}_{1}  + {\bm \nu}_{2} \big) ((0, \infty)) \, =\,\widehat{ {\bm \nu}}_{1} (0) + \widehat{{\bm \nu}}_{2}  (0)  \, =\, 4 (\delta_{3} - \delta_{1}) > 0 \, ; 
\end{equation}
this is consistent with the strong laws of large numbers (\ref{SLLN}), because of the normalization (\ref{eq: add L}) and 
\[
\lim_{T\to \infty} \frac{1}{\, T\, } \big( L^{G}(T) + L^{H}(T) \big) \, =\,  \frac{1}{\, 2\, } \,\big(  {\bm \nu}_{1}  + {\bm \nu}_{2} \big) \big((0, \infty)\big) \, =\, 2 (\delta_{3} - \delta_{1})\,=\, \lambda_1 + \lambda_2  \, . 
\]
In particular, the two measures $\, {\bm \nu}_1\,$, $\, {\bm \nu}_2\,$ of (\ref{eq: add L}) are both finite. 

 \medskip 
\noindent $\,\bullet\,$ Now let us take the limit in (\ref{eq: Lap}) as $\, \alpha_2 \downarrow 0\,$, to obtain
$$
\mathbb E_{{\bm \pi}} \Big[ e^{- \alpha_{1} G(T)}   \Big] \, =\, \widehat{{\bm \pi}} (\alpha_{1}, 0)\,=\, \frac{\,2\, \widehat{{\bm \nu}}_{1} (0) - \widehat{{\bm \nu}}_{2} (\alpha_{1})}{\, 2  \alpha_{1}   + 4 (\delta_{2}-\delta_{1})\,}  \, ;
$$
next we let $\, \alpha_1 \downarrow 0\,$ and get
$\, 2\, \widehat{{\bm \nu}}_{1} (0) - \widehat{{\bm \nu}}_{2} (0) \,=\, 4 (\delta_{2}-\delta_{1})\,$.
In conjunction with (\ref{eq: bdry mass}), this gives the total mass of each of the two measures on the axes, namely 
  \begin{equation}
\label{eq: bdry mass1}
 {\bm \nu}_{1} \big( (0, \infty) \big) \,=\, \widehat{{\bm \nu}}_{1} (0)
\,=\,{ 4 \over \,3\,} \big( \delta_2 + \delta_3 - 2\, \delta_1  \big)
\,=\, 2\, \lambda_1  
\,=\, 2\,\lim_{T \ar} \frac{\,L^G (T)\,}{T}\,,
\end{equation}
 \begin{equation}
\label{eq: bdry mass2}
 {\bm \nu}_{2} \big( (0, \infty) \big)   \,=\, \widehat{{\bm \nu}}_{2} (0)\,=\,{ 4 \over \,3\,} 
 \big( 2\, \delta_3 - \delta_1 - \delta_2 \big)
 \,=\, 2\, \lambda_2 \,=\,2\,
\lim_{T \ar} \frac{\,L^H (T)\,}{T} \,,
\end{equation}

\smallskip
\noindent
in accordance with (\ref{eq: add L}) and (\ref{SLLN}).   This way,  we express the \textsc{Laplace} transform  for the two marginals 
\begin{equation}
\label{marginal1}
\mathbb E_{{\bm \pi}} \Big[ e^{- \alpha_{1} G(T)}   \Big] \, =\, \widehat{{\bm \pi}} (\alpha_{1}, 0)\,=\, \frac{\,4\, \lambda_{1}  - \widehat{{\bm \nu}}_{2} (\alpha_{1})}{\, 2 \, \alpha_{1}   + 4 (\delta_{2}-\delta_{1})\,}  \,,  
\end{equation}   
\begin{equation}
\label{marginal2}
\mathbb E_{{\bm \pi}} \Big[ e^{- \alpha_{2} H(T)}   \Big] \, =\, \widehat{{\bm \pi}} (0, \alpha_{2} )\,=\, \frac{\,4\, \lambda_{2}  - \widehat{{\bm \nu}}_{1} (\alpha_{2})}{\, 2 \, \alpha_{2}   + 4 (\delta_{3}-\delta_{2})\,}  \,,
\end{equation} 
in terms of the  \textsc{Laplace} transforms of the  traces $\, {\bm \nu}_2\,$  and $\, {\bm \nu}_1\,$, respectively. 

 \newpage
 
\begin{rem}[Absolute Continuity]   
It can be shown as in section 8 of \citep{MR912049} 
that the measures  $\,  {\bm \nu}_{1} (\cdot) \,$ and $\,  {\bm \nu}_{2} (\cdot)\,$ are absolutely continuous with respect to \textsc{Lebesgue} measure; in other words, that there exist probability density functions $\,  {\bm \sigma}_{1} (\cdot) \,$ and $\,  {\bm \sigma}_{2} (\cdot)\,$ on $\, (0, \infty)\,$, such that 
\begin{equation} \label{def: density sigma}
\, {\bm \nu}_j (A) = 2\, \lambda_j \int_A {\bm \sigma}_{j} (z) \, \mathrm{d} z\, , \qquad \,A \in {\cal B} \big([0, \infty)\big)\,, \quad j=1,2\,.
\end{equation} 
 It follows now from (\ref{eq: Lap Sum}) that the   invariant distribution of the sum  of gaps $\, G(\cdot) + H(\cdot)\,$ is also absolutely continuous with respect to \textsc{Lebesgue} measure,  with probability density   $\, \mathbb{P}_{{\bm \pi}} \big( G(T) + H(T) \in \dx z \big) = {\bm \sigma}  (z) \, \dx z\,\,$ given by 
 \begin{equation}
\label{convex}
   {\bm \sigma}  (z) \,=\, \frac{\lambda_1}{\,\lambda_1 + \lambda_2\,} \, {\bm \sigma}_1 (z) + \frac{\lambda_2}{\,\lambda_1 + \lambda_2\,} \, {\bm \sigma}_2  (z) \,, \qquad z \in (0, \infty)\,.
\end{equation}    
\end{rem} 
 
\begin{rem}[No Product Form] 
\label{rem: no product form} 
It is seen from (\ref{eq: add L}), (\ref{eq: bdry mass1}) and the definition of the local time  that 
\[
(2 \lambda_{1})^{-1} \int^{\infty}_{0} e^{-\alpha_{2} h} \,{\bm \nu}_{1}({\mathrm d} h) \, =\, \Big( \mathbb E_{\pi} \big[ L^{G}(T) \big] \Big)^{-1} \cdot \mathbb E_{\pi} \Big[ \int^{T}_{0} e^{-\alpha_{1} G(t) - \alpha_{2} H(t)} {\mathrm d} L^{G}(t) \Big] 
~~~~~~~~~~~~~~~~~~~~~~~~~~~~~~
\]
\[
\, ~~=\,  \Big( \mathbb E_{\pi} \Big[ \lim_{\varepsilon \downarrow 0} \frac{1}{\, 2\varepsilon\, } \int^{T}_{0} {\bf 1}_{\{ G(t) < \varepsilon \}} {\mathrm d} t\Big] \Big)^{-1} \cdot \mathbb E_{\pi} \Big[\lim_{\varepsilon \downarrow 0} \frac{1}{\, 2\varepsilon\, }\int^{T}_{0} e^{-\alpha_{1} G(t) - \alpha_{2} H(t)} {\bf 1}_{\{G(t) < \varepsilon \}} {\mathrm d}t \Big] 
\]
\[
\, =\, \lim_{\varepsilon \downarrow 0} \big( {\bm \pi} (g < \varepsilon) \big)^{-1} \int^{\infty}_{0} \int^{\infty}_{0} e^{-\alpha_{1} g - \alpha_{2} h} \cdot {\bf 1}_{\{g < \varepsilon\}} \,{\bm \pi}({\mathrm d} g, {\mathrm d} h) \, , \quad (\alpha_{1}, \alpha_{2}) \in (0, \infty)^{2}   
\]
\noindent
holds for all $\, T \in (0, \infty)\,$. Hence, by the uniqueness of the \textsc{Laplace} transform and (\ref{def: density sigma}) we obtain 
$$ 
\mathbb{P}_{{\bm \pi}} \big( H(T)  \in \dx z \, \big| \,   G(T)=0\big) = \,{\bm \sigma}_{1} (z) \, \mathrm{d} z
\, ; \quad \text{similarly } \quad \mathbb{P}_{{\bm \pi}} \big( G(T)  \in \dx z \, \big| \,   H(T)=0\big) = \,{\bm \sigma}_{2} (z) \, \mathrm{d} z\,.
$$ 
With this  interpretation  in mind, it becomes clear  that {\it the joint distribution of the two gaps  under the invariant probability measure cannot possibly  be the product of their two marginal distributions. }
\end{rem}

\subsubsection{The General Symmetric Case}
 \label{SymmCase}

Let us consider now the general symmetric case 
\begin{equation}
\label{sym}
\delta_2 - \delta_1\,=\, \delta_3 - \delta_2\,=:\, \lambda / 2 \,>\,0\,;
\end{equation}
the configuration of (\ref{4}) is a special case of this situation, with $\delta_2=0$.

We have now $\,  \lambda_1 = \lambda_2 = \lambda \,$ in (\ref{ord4}),   as well as  
  $\,  {\bm \nu}_{1} (\cdot) \equiv  {\bm \nu}_{2} (\cdot) =: {\bm \nu} (\cdot)\,$, $\,  {\bm \sigma}_{1} (\cdot) \equiv  {\bm \sigma}_{2} (\cdot) \equiv {\bm \sigma} (\cdot)\,$, and (\ref{eq: Lap Sum}), (\ref{eq: Lap}) lead to $\,\widehat{{\bm \nu}}  (\alpha ) \,=\, 2 \,\lambda\, \,\widehat{{\bm \pi}}  (\alpha ,  \alpha)\,$ and to the   {\it functional equation} 
\begin{equation}
\label{sym1}
 \widehat{{\bm \pi}}  (\alpha_{1}, \alpha_{2}) \, =\,\frac{\lambda}{\,  (\alpha_{1} - \alpha_{2})^{2} + \lambda (\alpha_{1} +  \alpha_{2})\,}\, \Big[\,(2\alpha_{1} - \alpha_{2}) \, \widehat{{\bm \pi}}  \big(\alpha_{2}, \alpha_{2}\big) + (2\alpha_{2} - \alpha_{1})  \, \widehat{{\bm \pi}}  \big(\alpha_{1}, \alpha_{1}\big) \, \Big] ~~~~
\end{equation}
 for the \textsc{Laplace} transform of the joint distribution of the gaps. To wit,  in the   symmetric case of (\ref{sym}) and in steady state, {\it the joint distribution of the gaps is determined by the distribution of    their sum} -- or for that matter by the  common  marginal distribution of each of these gaps, as in this case
\begin{equation}
\label{sym2}
\widehat{{\bm \nu}}  (\alpha ) \,=\, 2 \,\lambda\, \,\widehat{{\bm \pi}}  (\alpha ,  \alpha) \,, \qquad
  \widehat{{\bm \pi}}  (\alpha ,  0) \, =\, \widehat{{\bm \pi}}  (0, \alpha ) \, =\,\frac{\lambda}{\,   \alpha  + \lambda  \,}\,\big[\, 2 -  \widehat{{\bm \pi}}  (\alpha ,  \alpha) \,\big]   \,;   \qquad \alpha \ge 0\,.
\end{equation}
\noindent
This last equation suggests that, in the symmetric case of (\ref{sym}), the  marginal invariant distributions of the gaps   have common probability density function
\begin{equation}
\label{sym3}
\frac{\mathbb{P}_{ {\bm \pi}} \big( G(t) \in \dx \xi \big)}{\dx \xi } \,=\, \frac{
\mathbb{P}_{ {\bm \pi}} \big( H(t) \in \dx \xi \big)}{\dx \xi } \,=\, 
{\bm \tau} (\xi)\,  =  \lambda    \left[ \, 2\, e^{\, -   \lambda \xi} \,- \int_0^\xi e^{\, -   \lambda (\xi-z)} \, {\bm \sigma} (z)\, \mathrm{d} z\, \right]    
\end{equation}
 for $\, \xi \in (0, \infty)\,$. 
  In particular,   the invariant distribution  for the sum of the two gaps has finite moment-generating function, thus moments of all orders: 
  \begin{equation}
\label{sat}
  \int_0^\infty e^{\,    \lambda z} \, {\bm \sigma} (z)\, \mathrm{d} z \le 2\,  . 
  \end{equation}

  \begin{rem}[The Average Gaps in Steady-State] 
     \label{Moments}
Always under the condition (\ref{sym}), suppose that the pair of processes $\,( G (\cdot),H(\cdot))\,$ runs under its stationary distribution ${\bm \pi}$. Then by taking expectations in the expression (\ref{NoLocTimes}), one obtains $\,\mathbb E_{ {\bm \pi} } [\,1-(3 \lambda /2) (G (t)+ H(t)) \,]=0\,$; due to symmetry, this shows   
\begin{equation}
\label{FirstMoments}
\mathbb E_{ {\bm \pi} }  \big[G(t)\big]\,=\,\mathbb E_{ {\bm \pi} }\big[H(t)\big] \,=\,\frac{1}{\,3\,\lambda\,}\,.
\end{equation}

The first-moment computation (\ref{FirstMoments}) {\it rules out exponential marginal distributions for the gaps} in this symmetric  case (\ref{sym}). For if   $\, {\bm \tau} (\xi) = \beta \, e^{\, - \beta \, \xi}\,, ~ \xi \in (0, \infty)\,$ were valid for some constant $\, \beta >0\,$, then the equation
$$
{\bm \tau} (\xi) \, e^{\,   \lambda \, \xi}\,=\, \lambda \left[ \, 2 - \int_0^\xi e^{\, \lambda \, z}\, {\bm \sigma} (z)\, \dx z \, \right], \qquad 0 < \xi <   \infty
$$
from (\ref{sym3}) would force $\, \beta = 2\, \lambda\,$ and  $\, {\bm \tau} (\xi) = {\bm \sigma} (\xi)= \beta \, e^{\, - \beta \, \xi}\, $, thus $\, \mathbb E_{ {\bm \pi} }  \big[G(t)\big]=\mathbb E_{ {\bm \pi} }\big[H(t)\big]=1/ \beta = 1 / (2 \lambda) ,$  contradicting (\ref{FirstMoments}). 
\end{rem}

\begin{figure}
\begin{center}
\begin{tabular}{cc}
\includegraphics[scale=0.22]{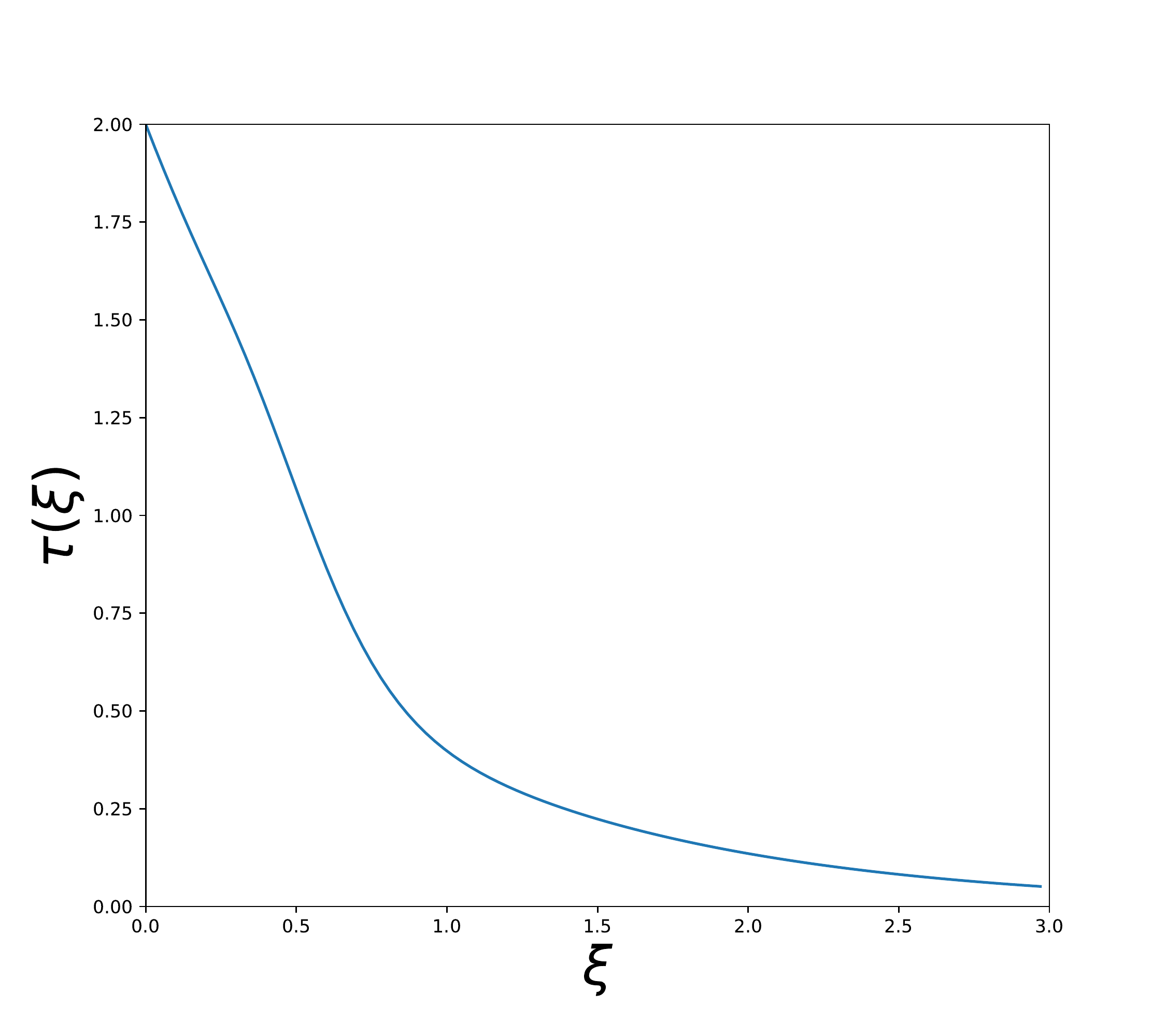}  & 
\includegraphics[scale=0.12]{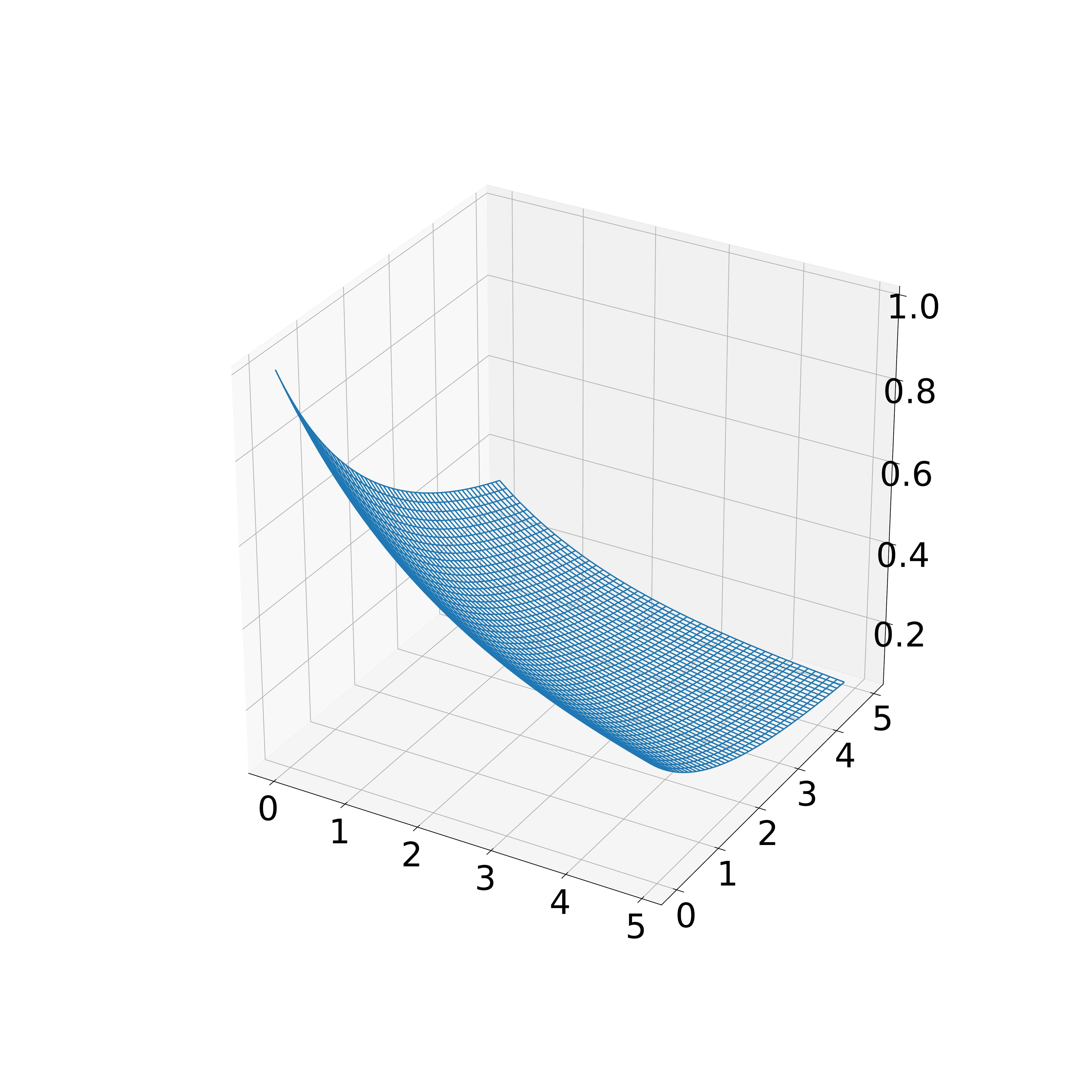} \\
\end{tabular} 
\end{center} 
\caption{\label{fig: taupihat} The marginal probability density function $\, {\bm \tau} (\cdot) \,$ in (\ref{eq: tau-conj}) (left) and the joint Laplace transform $\, \widehat{\bm \pi} (\alpha_{1}, \alpha_{2}) \,$ in (\ref{eq: pihat-conj}) (right) under the conjecture on $\,{\bm \sigma} (\cdot) \,$ in (\ref{eq: sigma-conj}). }
\end{figure}

\begin{rem}[A Conjecture Involving the Gamma Distribution] 
\label{conj}
Always in the symmetric case (\ref{sym}), we conjecture that under the stationary distribution $\,{\bm \pi} \,$, the density function $\,{\bm \sigma} (\cdot) \,$  for the sum of the gaps  $\, G (\cdot) + H (\cdot) \,$ is the Gamma probability density with parameters $\, (\lambda {\mathrm u} , (2/3) {\mathrm u} ) \,$, i.e., 
\begin{equation}
 \label{eq: sigma-conj}
{\bm \sigma} (\xi) \, =\,  \frac{(\lambda \mathrm u)^{2 {\mathrm u}/3} }{ {\bm \Gamma} ( 2 \,{\mathrm u} / 3 ) }\,\, \xi^{\,(2 {\mathrm u}/3) -1} \, e^{\,-\lambda {\mathrm u} \xi}  \, , \qquad 0 < \xi <   \infty \,. 
\end{equation} 
Here  $\, {\bm \Gamma} (z) = \int_0^\infty x^{z-1} e^{-x} {\mathrm d} x \,$ is the Gamma function, and the positive constant $\,{\mathrm u}\,$   the unique solution of the transcendental equation $\, 2 u \log ( u/ (u-1)) \, =\,  3 \log 2\,.$   Equivalently, $\,{\mathrm u}\,$  is given as  
\begin{equation}
{\mathrm u} \, :=\,  \frac{\, 3 \log 2 \, }{\, 3 \log 2 + 2\, {\bm {\mathrm W}} ( - (3 \log 2) / (4 \sqrt{2}) )} \, 
\end{equation} 
in terms of $\, {\bm {\mathrm W}} (\cdot) \,,$   the {\it \textsc{Lambert} W-function}  or ``product logarithm", with the property $\, z = {\bm {\mathrm W}} (z e^z)$.

With this probability density function  $\, {\bm \sigma} (\cdot) \, $ as in (\ref{eq: sigma-conj}), the condition  (\ref{sat}) is satisfied as equality: 
\[
\int^{\infty}_{0} e^{\lambda z} {\bm \sigma} (z) {\mathrm d} z \, =\,  \Big( \frac{{\mathrm u } }{{\mathrm u } - 1}\Big)^{2 {\mathrm u}/3} \, =\,  2 \, . 
\]
It follows from (\ref{sym3}) that the common marginal probability density for the gaps becomes then 
\begin{equation} 
\label{eq: tau-conj}
{\bm \tau} ( \xi ) \, =\,  \lambda e^{-\lambda \xi} \Big[ \Big( \int^{\infty}_{0} - \int^{\xi}_{0} \Big) e^{\lambda z} {\bm \sigma} (z) {\mathrm d} z \Big] \, =\,  \lambda e^{-\lambda \xi} \cdot \int^{\infty}_{\xi} e^{\lambda z} {\bm \sigma} (z ) {\mathrm d} z \, ,\qquad 0 < \xi <   \infty 
\end{equation}
under the invariant distribution; that the \textsc{Laplace} transform   in (\ref{sym1}) of  the joint  distribution of the gaps takes for   $\, (\alpha_{1}, \alpha_{2}) \in (0, \infty)^2\,   $ the form 
\begin{equation} 
\label{eq: pihat-conj}
\widehat{\bm \pi}(\alpha_{1}, \alpha_{2}) \, =\,  \frac{{\lambda} }{\, (\alpha_{1} - \alpha_{2})^{2} + \lambda (\alpha_{1} + \alpha_{2}) \, } \bigg[ \, \big(2\alpha_{1} - \alpha_{2}\big) \Big( \frac{\lambda {\mathrm u}}{ \lambda {\mathrm u}+\alpha_{2}} \Big)^{\frac{2{\mathrm u}}{3}} + ( 2 \alpha_{2} - \alpha_{1}) \Big( \frac{\lambda {\mathrm u}}{ \lambda {\mathrm u}+\alpha_{1}}\Big)^{\frac{2{\mathrm u}}{3}} \, \bigg]  
\end{equation} 
 as in Figure \ref{fig: taupihat}; and that the first-moment condition of (\ref{FirstMoments}) holds, namely,  
\[
\mathbb E_{\bm \pi} [G(t) ] = \mathbb E _{\bm \pi} [ H(t) ] \, =\,  \int_{0}^{\infty} \xi \cdot \lambda e^{-\lambda \xi} \Big[ \int^{\infty}_{\xi} e^{\lambda z}  {\bm \sigma} (z) {\mathrm d} z \Big] {\mathrm d} \xi 
 \, =\,  \frac{1}{\, 3 \, \lambda \, }\, . 
\]
\end{rem}

\section{Ballistic    Middle Motion, Diffusive Hedges   
 } 
 \label{sec5}

We take up in this section the ``obverse" of the three-particle system in (\ref{1})-(\ref{3}), by which we mean replacing the equations in (\ref{1}) by 
\begin{equation}
\label{A1}
X_i (\cdot) \,=\, x_i + \sum_{k=1}^3\, \delta_k \int_0^{\, \cdot} \1_{ \{ X_i (t) = R^X_{k} (t)\} } \, \dx t +  \int_0^{\, \cdot} \left(  \1_{ \{ X_i (t) = R^X_{1} (t)\} } + \1_{ \{ X_i (t) = R^X_{3} (t)\} } \right)   \dx B_i (t)  
\end{equation}
for $\,i=1, 2, 3 ,$  and replacing in the notation of (\ref{ranks}) and (\ref{LT}) the conditions of (\ref{2}), (\ref{3}) by 
\begin{equation}
\label{A2}
     \int_0^{\, \infty} \1_{ \{ R^X_{k} (t) = R^X_{\ell} (t)\} } \, \dx  t \,=\, 0\,, ~~~~~\forall ~~ k < \ell  \,;\qquad ~~~L^{R^X_1 - R^X_3} (\cdot) \, \equiv 0.
   \end{equation}
 The processes $B_1 (\cdot), B_2 (\cdot), B_3 (\cdot),$ are again independent scalar Brownian motions.  
 
  \smallskip
It is now the leading and laggard particles that undergo diffusion, and the  particle in the middle  that ``goes   ballistic". Once again, the dynamics of the system (\ref{A1}) involve dispersion functions that are both discontinuous and degenerate.

In contrast to Proposition  \ref{notriple}, however, we shall see here that {\it ``the two Brownian motions can eventually  squeeze the ballistic motion in the middle",} and thus triple points can occur; in fact, {\it   with probability one in the case $\delta_{1} = \delta_{2} = \delta_{3}\,$}.\, Yet also, that the resulting   triple collisions are ``soft", in that the   local time  $\,L^{R^X_1 - R^X_3} (\cdot) \,$ associated with them is identically equal to zero,  as postulated in  the second requirement of 
(\ref{A2}). The first  requirement there, mandates that all collisions are non-sticky.

\subsection{Analysis} \label{sec: 6.1}

Let us assume that a weak solution  to this system of (\ref{A1}), (\ref{A2}) has been constructed on an appropriate filtered probability space $\, (\Omega, \F, \Prob),$ $\mathbb{F}=  \{ \F (t)  \}_{0 \le t < \infty}\,$. Reasoning as before, we have the analogues 
\begin{equation} \label{eq: RX1-3} 
\begin{split}
\hspace{-0.3cm} R^X_{1} (t)\, &=\, x_1 + \delta_1\, t + W_1(t)+ {1 \over \,2\,} \, \Lambda^{(1,2)}(t) \, , \, \,  R^X_{2} (t)\, =\, x_2 + \delta_2\, t   - {1 \over \,2\,} \, \Lambda^{(1,2)}(t) + {1 \over \,2\,} \, \Lambda^{(2,3)}(t) \, , \\
R^X_{3} (t)\, &=\, x_3 + \delta_3\, t  + W_3(t)-   {1 \over \,2\,} \, \Lambda^{(2,3)}(t) \, ; \quad t \ge 0 
\end{split} 
\end{equation}
 of (\ref{RX1})-(\ref{RX2}) in the notation of (\ref{Lambda}). As in    (\ref{W2}), the processes  
\begin{equation}
\label{W1,3}
 W_k (\cdot)\,:=\, \sum_{i=1}^3 \int_0^{\, \cdot} \1_{ \{ X_i (t) = R^X_{k} (t)\} } \, \dx B_i (t)\,, ~~~~~~\, k=1, 3\,
\end{equation} 
  are independent Brownian motions by the \textsc{P. L\'evy} theorem. It is   fairly clear that the center of gravity of this system evolves as Brownian motion with drift, since   
$ 
\sum_{i=1}^3 X_i (t) = x + \delta \, t + \sqrt{2\,}\, Q(t)\,   
 $
for $\,x= x_1 + x_2 + x_3\,,$   $\,\delta= \delta_1 + \delta_2 + \delta_3\,,$ and $ \,Q(\cdot) = (W_1(\cdot) + W_3(\cdot)) /   \sqrt{2\,}\,$ is standard  Brownian motion.

 \smallskip
Now, the gaps 
$\,G (\cdot):= R^X_{1} (\cdot)- R^X_{2} (\cdot)\,$ and $\, H (\cdot):=R^X_{2} (\cdot)- R^X_{3} (\cdot)\,$ are given as 
$$
G (t) \,=\,U (t) + L^G (t) \,, \qquad H (t) \,=\,V (t) + L^H (t)\,, ~~~~~~~~~~0 \le t < \infty
$$
in the manner of (\ref{G}), (\ref{H}), where again $\, L^G (\cdot) \equiv \Lambda^{(1,2)}(\cdot),$ $\, L^H (\cdot) \equiv \Lambda^{( 2,3)}(\cdot),$ and  
$$
U(t)  := x_1 - x_2 - \big(\delta_2 - \delta_1 \big) \,t  + W_1(t) - {1 \over \,2\,} \, L^{H}(t)\,, \quad V(t) : = x_2 - x_3 - \big(\delta_3 - \delta_2 \big)\, t  - W_3(t) - {1 \over \,2\,} \, L^{G}(t)\,.
$$
The theory of the \textsc{Skorokhod} reflection problem provides   the  system of equations linking   the two local time processes $\, L^G (\cdot) \,$, $\, L^H (\cdot) \,$, an analogue  of the  system (\ref{5}), (\ref{6}):
\begin{equation}
\label{A3}
L^G (t) \,=\, \max_{0 \le s \le t} \big( - U (s) \big)^+\,=\, \max_{0 \le s \le t} \Big(-(x_1 - x_2) + \big(\delta_2 - \delta_1 \big) \,s  - W_1(s) + {1 \over \,2\,} \, L^{H}(s) \Big)^+ 
   \end{equation}
\begin{equation}
\label{A4}
L^H (t) \,=\, \max_{0 \le s \le t} \big( - V (s) \big)^+\,=\, \max_{0 \le s \le t} \Big(-(x_2 - x_3) + \big(\delta_3 - \delta_2 \big) \,s  + W_3(s) + {1 \over \,2\,} \, L^{G}(s) \Big)^+ .
   \end{equation}

\subsection{Synthesis} 
\label{sec: 6.2}

Starting again with   given real numbers $\, \delta_1, \,\delta_2,\,\delta_3\,$  and $ \, x_1 > x_2 > x_3\,$, we  construct   a filtered probability space $\, (\Omega, \widetilde{\F}, \Prob),$ $ \widetilde{\mathbb{F}} = \big\{ \widetilde{\F} (t) \big\}_{0 \le t < \infty}\,$ which supports  three independent, standard  Brownian motions $\, W_k (\cdot)\,$, $\,k =1, 2, 3.$ We  consider the    analogue  
 \begin{equation}
\label{A5}
A (t) \,=\,   \max_{0 \le s \le t} \Big(-(x_1 - x_2) + \big(\delta_2 - \delta_1 \big) \,s  - W_1(s) + {1 \over \,2\,} \, \Gamma (s) \Big)^+\,,\quad 0 \le t <\infty
\end{equation}
 \begin{equation}
\label{A6}
\Gamma (t) \,=\,  \max_{0 \le s \le t} \Big(-(x_2 - x_3) + \big(\delta_3 - \delta_2 \big) \,s  + W_3(s) + {1 \over \,2\,} \,A(s) \Big)^+\,,\quad 0 \le t <\infty
\end{equation}
of the system of equations (\ref{A3}) and (\ref{A4}) for two continuous, nondecreasing and adapted processes $\, A(\cdot)\,$ and $\, \Gamma (\cdot)\,$ with $\, A(0) = \Gamma (0) =0\,$. Once again, the theory of   \citep{MR606992} 
guarantees the existence of a unique continuous    solution $\, \big( A(\cdot), \Gamma (\cdot) \big)\,$ for  the system (\ref{A5}), (\ref{A6}), 
adapted to the filtration $\,\mathbb{F}^{\,(W_1, W_3)}\,$ generated by the 2-D Brownian motion $(W_1 (\cdot), W_3 (\cdot))$:  
 \begin{equation}
\label{A14}
\mathfrak{F}^{\,(A, \Gamma)} (t) \,\subseteq \, \mathfrak{F}^{\,(W_1, W_3)} (t)\,, \qquad 0 \le t < \infty\,.   
\end{equation} 
With the processes $\,   A(\cdot), \,\Gamma (\cdot)  \,$ thus in place, we consider the continuous semimartingales
\[
U(t) := x_1 - x_2 - \big(\delta_2 - \delta_1 \big) \,t  + W_1(t) - {1 \over \,2\,} \, \Gamma(t)\,, \quad V(t) := x_2 - x_3 - \big(\delta_3 - \delta_2 \big)\, t  - W_3(t) - {1 \over \,2\,} \, A(t) ~~
\]
 and then ``fold" them to obtain their \textsc{Skorokhod} reflections 
 \begin{equation}
\label{A7}
G ( t) \,:=\, U ( t) +\max_{0 \le s \le t} \big( - U (s) \big)^+ \, 
= \, x_1 - x_2 - \big(\delta_2 - \delta_1 \big) \,t  + W_1(t) - {1 \over \,2\,} \, \Gamma(t) + A(t) \, \ge \, 0
\end{equation}
\begin{equation}
\label{A8}
H ( t) \,:=\,   V ( t) +\max_{0 \le s \le t} \big( - V (s) \big)^+ \,
= \, x_2 - x_3 - \big(\delta_3 - \delta_2 \big) \,t  - W_3(t) - {1 \over \,2\,} \, A(t) + \Gamma(t)\, \ge \, 0
\end{equation}
for $\, t \in [0, \infty)\,$. This system of equations (\ref{A7}), (\ref{A8}) can be cast in the \textsc{Harrison-Reiman} form 
$$
\begin{pmatrix}
      G(t)   \\
      H(t)  
\end{pmatrix}
  \,=\, \begin{pmatrix}
      G(0)   \\
      H(0)  
\end{pmatrix} + \mathfrak{Z} (t) + \mathcal{R}\, \mathfrak{L} (t)\,, \qquad 0 \le t < \infty 
$$
of (\ref{eq: mfrakQ}), now with covariance matrix
$$
\mathcal{C}\, := \begin{pmatrix}
      1 &     0 \\
        0 &   1
\end{pmatrix}, \qquad \text{reflection matrix} \qquad  
\mathcal{R}\, =\, \mathcal{I} - \mathcal{Q}\,, \quad \mathcal{Q}\, := \begin{pmatrix}
      0 &     1/2 \\
        1/2 &   0
\end{pmatrix} \, , \, \, \text{ and } 
$$
$$
\mathfrak{L} (t) \,= \begin{pmatrix}
      L^G(t)    \\
      L^H (t)  
\end{pmatrix},\qquad 
 \mathfrak{Z} (t) \, = \begin{pmatrix}
       ( \delta_1 - \delta_2) t + W_1(t)      \\
       ( \delta_2 - \delta_3) t - W_3(t)  
\end{pmatrix}\,, \qquad 0 \le t < \infty\,.
$$
We obtain easily the analogues 
\begin{equation}
\label{A9}
\int_0^\infty \1_{ \{ G(t)>0\} } \, \dx A(t) \,=\,0\,, \qquad \int_0^\infty \1_{ \{ H(t)>0\} } \, \dx \Gamma (t) \,=\,0\,, 
\end{equation}
\begin{equation}
\label{A10}
\int_0^\infty \1_{ \{ G(t)=0\} } \, \dx t \,=\,0\,, \qquad \int_0^\infty \1_{ \{ H(t)=0\} } \, \dx t \,=\,0 
\end{equation}
of the properties in (\ref{10}), (\ref{9}) using, respectively, the theories of the \textsc{Skorokhod} reflection problem and of semimartingale local time. We claim that we also have here the analogues
 \begin{equation}
\label{A11}
\int_0^\infty \1_{ \{ H(t)=0\} } \, \dx A (t) \,=\,0\,, \qquad
\int_0^\infty \1_{ \{ G(t)=0\} } \, \dx \Gamma(t) \,=\,0\ 
\end{equation}
of the properties in (\ref{11}), though now for a different reason. 

Let us elaborate: The system of (\ref{A7}), (\ref{A8}) characterizes a non-degenerate, two-dimensional Brownian motion $\, ( G(\cdot),$ $ H(\cdot))\,$ with drift $\, ( \delta_1 - \delta_2, \, \delta_2 - \delta_3)\,,$   reflected off the faces of the nonnegative quadrant. But now, in contrast to the situation prevalent in Section \ref{sec1}, it becomes perfectly possible for this planar motion to hit the corner of the nonnegative orthant with positive probability. In fact, according to   Theorem 2.2 of \citep{MR792398} 
\footnote{~    This theory is not directly applicable to the setting of Section \ref{sec1}, or to that of the Appendix (Section \ref{skew-elastic}), because there the driving Brownian motions are one-dimensional.  
}   (see also \citep{MR3325099} 
\citep{MR2680554}), 
 {\it   when $\delta_1 =\delta_2 =\delta_3 $   this process will hit   the corner of the quadrant  with probability one:}  
$\,\mathbb P ( G(t) = H(t)= 0, ~ \text{ for some } t > 0 ) =1$.  

 Yet, we have always 
 \begin{equation}
\label{A12}
\int_0^\infty \1_{ \{ G(t)=0\} } \, \dx \Gamma(t) \,=\, \int_0^\infty \1_{ \{ G(t)=H(t)=0\} } \, \dx \Gamma(t) \,=\,0\,,
\end{equation}
 \begin{equation}
\label{A13}
\int_0^\infty \1_{ \{ H(t)=0\} } \, \dx A (t) \,=\, \int_0^\infty \1_{ \{G(t)= H(t)=0\} } \, \dx A (t) \,=\,0\,, 
\end{equation}
again with probability one.  Here the first two equalities come from those in (\ref{A9}),  and the second two equalities from Theorem 1 in \citep{MR921820}.  
 The claims in  (\ref{A11}) are thus established.

Armed with the properties (\ref{A9})-(\ref{A11}), we obtain here again the identifications $\, L^G (\cdot) \equiv  A(\cdot)\,$, $\, L^H (\cdot)   \equiv  \Gamma(\cdot)\,$ of the processes $\,A(\cdot), \, \Gamma (\cdot)\,$ in (\ref{A5}), (\ref{A6}) as local times. Details are omitted,   as they are very similar to what was done before. 

\smallskip
\noindent
$\bullet~$ {\it Construction of the Ranked Motions:} We introduce now, by analogy with (\ref{R1})-(\ref{R2}), the   processes 
 \begin{equation} 
 \label{eq: RX1-3b} 
 \begin{split}
R_{1} (t)\,:=\, & x_1 + \delta_1\, t + W_1 (t) + {1 \over \,2\,} \, A(t) \, , \, \, \quad 
R_{2} (t)\,:=\,  x_2 + \delta_2\, t  - {1 \over \,2\,} \, A(t) + {1 \over \,2\,} \, \Gamma (t) \, , 
\\
R_{3} (t)\,:=\, & x_3 + \delta_3\, t  + W_3 (t)-   {1 \over \,2\,} \, \Gamma (t) 
\end{split}
\end{equation}
 \smallskip
\noindent
for $\, 0 \le t < \infty\,$,  and note again the relations $\, R_1 (\cdot) - R_2 (\cdot) = G (\cdot)\ge 0\,$, $\, R_2 (\cdot) - R_3 (\cdot) = H (\cdot) \ge 0\,$ and the comparisons  $\, R_1 (\cdot)   \ge     R_2 (\cdot)  \ge   R_3 (\cdot)   \,$. The range
$$
R_1 (t) - R_3 (t) = G(t) + H(t)= x_1 - x_3 + \big( \delta_1 - \delta_3 \big)\, t + W_1 (t) - W_3 (t) + \frac{1}{\,2\,} \Big( A(t) + \Gamma (t) \Big)\,, \quad 0 \le t < \infty
$$
 is a nonnegative semimartingale with $\, \langle R_1   - R_3 \rangle (t)=2\,t\,$ and local time at the origin 
\begin{equation} 
\label{eq: no LTat0} 
L^{R_1 - R_3} (\cdot)\,=\, \int_0^{\, \cdot} \, \1_{ \{ G(t) + H(t) =0 \} }\, \Big[\, \big( \delta_1 - \delta_3 \big)\,\mathrm{d} t  + \frac{1}{\,2\,} \Big( \mathrm{d} A(t) + \mathrm{d} \Gamma (t) \Big)\,\Big]\,=\,0
\end{equation}
by virtue of (\ref{LT}) and  (\ref{A10}), (\ref{A11}). This is in accordance with the second property posited in (\ref{A2}).

Whereas, we argued already that, at least when $\,\delta_{1} \, =\,  \delta_{2} \, =\, \delta_{3}\,$, the first time of a triple collision  is a.e. finite: i.e., $\Prob ({\cal S} < \infty) =1$ for 
 \begin{equation}
\label{A17}
{\cal S} \,:=\, \inf \big\{ t \ge 0 : R_1 (t) = R_3 (t) \big\} \, =\, \inf\{t \ge 0 : G(t) \, =\,  H(t) \, =\,  0 \}.
\end{equation}

\begin{rem}
[On the Structure of Filtrations] 
 It follows from (\ref{eq: RX1-3b}), (\ref{A14})  that the so-constructed triple  $(R_1 (\cdot), R_2 (\cdot), R_3 (\cdot))$ is adapted to the filtration $\,\mathbb{F}^{\,(W_1, W_3)}\,$ of the planar Brownian motion $(W_1 (
\cdot),  W_3 (\cdot)) $:
 \begin{equation}
\label{A15}
\mathfrak{F}^{\,(R_1, R_2, R_3)} (t) \,\subseteq \, \mathfrak{F}^{\,(W_1, W_3)} (t)\,, \qquad 0 \le t < \infty\,.  
\end{equation}
  On the other hand, the identifications
 $$
 A(\cdot) =  L^G (\cdot) =    L^{R_1 - R_2} (\cdot)\,, \qquad  \Gamma (\cdot) =  L^H (\cdot) =    L^{R_2 - R_3} (\cdot)
 $$
show that     $(A (\cdot),$ $ \Gamma (\cdot))$ is adapted to the filtration $\,\mathbb{F}^{\,(R_1, R_2,  R_3)}\,$ generated by the triple  $(R_1 (
\cdot), R_2 (\cdot),$ $ R_3 (\cdot));$   on account  of (\ref{eq: RX1-3b}), it follows that the same is true of the   2-D Brownian motion $(W_1 (
\cdot),  W_3 (\cdot)).$ In other words, the reverse inclusion of (\ref{A15}) is also valid, and we conclude that the triple  $(R_1 (
\cdot), R_2 (\cdot),$ $ R_3 (\cdot))$ and the pair $(W_1 (\cdot),  W_3 (\cdot))$ generate exactly the same filtration: 
\begin{equation}
\label{A16}
\mathfrak{F}^{\,(R_1, R_2, R_3)} (t) \,= \, \mathfrak{F}^{\,(W_1, W_3)} (t)\,, \qquad 0 \le t < \infty\,.  
\end{equation}
\end{rem}

\noindent
$\bullet~$ {\it Construction of the Individual Motions Up Until a Triple Collision:} The   methodologies   deployed   in  \S \ref{3.3}, show here as well how to construct a {\it strong} solution to the system (\ref{A1}) subject to the requirements of (\ref{A2}), up until the first time ${\cal S}$ of (\ref{A17}) when  a triple collision occurs. The difference here, of course, is that this can happen now in finite time, with positive probability; in fact,   with probability one, i.e.,   $\Prob ({\cal S} < \infty) =1,$ when $\,\delta_{1} \, =\, \delta_{2} \, =\,  \delta_{3} \,$ as we have seen. 

Thus, we need to find another way to construct a solution {\it beyond} this time, that is, on the event $\,\{S < \infty\}$. For concreteness, and in order to simplify terminology and notation, we shall assume for the remainder of the present subsection that this event has full $\, \mathbb P-$measure.

 \smallskip
\noindent
$\bullet~$ {\it Construction of the Individual Motions After a Triple Collision:} 
 In order to construct the processes that satisfy (\ref{A1})    after the first triple collision time  $ \,\mathcal S ,$  we consider the excursions of the rank-gap process $\,(G(\cdot), H(\cdot))\,$   and unfold them, by permuting randomly the names of the individual components.  

\smallskip
More precisely, for the  semimartingales $\,G(\cdot) \,$ and $\, H(\cdot) \,$ let us define the first passage time $$\, \sigma_{0} \, :=\, \inf \big\{ t \ge 0 : G(t) \wedge H(t) \, =\,  0 \big\}  \, ,$$  the zero sets $$\,\mathfrak{Z}^G:=  \{t \ge 0: G(t) \, =\, 0 \}\, , \qquad \mathfrak{Z}^H:= \{t \ge 0: H(t) \, =\, 0 \}\,,$$ and the corresponding countably-many excursion intervals $\, \{\mathcal C_{\ell}^{G}, \ell \in \mathbb N\}\,$, $\, \{ \mathcal C^{H}_{m}, m \in \mathbb N\}  $ away from the origin in a measurable manner, i.e., 
\[
\mathbb R_{+} \setminus \mathfrak{Z}^G  \,\, =\,  \bigcup_{\ell \in \mathbb N} \mathcal C_{\ell}^{G} \, , \qquad 
\mathbb R_{+} \setminus \mathfrak{Z}^H \,\, =\,  \bigcup_{m \in \mathbb N} \mathcal C_{m}^{H} \, . 
\]
We need to permute the indices in a proper and consistent way, so we define the   permutation matrices
\begin{equation} \label{eq: permutations}
\mathfrak{P}_{1,2} \, :=\, \left ( \begin{array}{ccc} 0 & 1 & 0 \\ 1 & 0 & 0 \\ 0 & 0 & 1 \\ \end{array} \right )   , \qquad 
\mathfrak{P}_{2,3} \, :=\, \left ( \begin{array}{ccc} 1 & 0 & 0 \\ 0 & 0 & 1 \\ 0 & 1 & 0 \\ \end{array} \right )   . 
\end{equation}
Here $\,\mathfrak{P}_{1,2}\,$   permutes the first   and   second elements, and $\,\mathfrak{P}_{2,3}\,$  permutes the second   and   third elements.    
 
 \smallskip
We   enlarge   the probability space with I.I.D. random (permutation) matrices $\, \{ \Xi^{G}_{\ell, m},\, \ell \in \mathbb N, \,m \in \mathbb N\} \,$  and $\, \{ \Xi^{H}_{\ell, m}, \,\ell \in \mathbb N, \, m \in \mathbb N\}  ,$  independent of each other and of the filtration $\, \mathbb F^{R^{}}(\cdot)\,$ generated by the rank process $\, (R^{}_{1}(\cdot), R^{}_{2}(\cdot), R^{}_{3}(\cdot))'\,$. Here,  for each $\, (\ell, m ),$ the random matrix $\,\Xi^{G}_{\ell, m}\,$ takes each of the values in $\,\{{\cal I}  ,\,\mathfrak{P}_{1, 2}\}\,$ 
with 
probability $\,1  /  2\,$; whereas   $\, \Xi_{\ell, m}^{H}\,$ takes each of the values in $\,\{ {\cal I},\,\mathfrak{P}_{  2, 3}\}\,$ 
with 
probability $\,1  /   2\,$. With these ingredients we introduce the simple, matrix-valued  process 
\begin{equation} \label{eq: eta}
\begin{split}
{\bm \eta}(\cdot) \, :=\,  \sum_{\ell \in \N} \sum_{m \in \N}  {\bf 1}_{\mathcal C^{G}_{\ell} \cap \mathcal C^{H}_{m} \cap [\sigma_{0}, \infty)}(\cdot) & \Big( \big(  \Xi^{G}_{\ell, m} - {\cal I} \big) \, {\bf 1}_{\{\inf \mathcal C_{\ell}^{G} > \inf \mathcal C_{m}^{H}\}}  
 \\
& \,\,\, \, \, {} 
+ \big(\Xi^{H}_{\ell, m} - {\cal I} \big) \, {\bf 1}_{\{\inf \mathcal C_{\ell}^{G} < \inf \mathcal C_{m}^{H}\}}  \Big) 
\end{split}
\end{equation}
and then define the matrix-valued process $\, Z(\cdot) \,$ as the solution to the stochastic integral equation  
\begin{equation} 
\label{eq: Z}
Z(\cdot) \, =\, \mathcal{I} + \int^{\cdot}_{0} Z(t)\, {\mathrm d} {\bm \eta} (t) \,.  ~~   
\end{equation}
To  construct this solution, we proceed via an approximating scheme as in (\ref{eq: tauZe})-(\ref{eq: Zeps}) below. 

\smallskip
The definition of the process $\,{\bm \eta}(\cdot) \,$ in (\ref{eq: eta}), after the time $\, \sigma_{0}\,$, is understood as follows:

\smallskip
\noindent
{\bf (i)} On the interval $\,\mathcal C_{\ell}^{G} \cap \mathcal C^{H}_{m}\,$ of the excursion which starts from a point in $\, \mathfrak{Z}^G\,$ (i.e., $\, \inf \mathcal C_{\ell}^{G} > \inf \mathcal C_{m}^{H}\,$), the simple process $\, {\bm \eta} (\cdot) \,$ assigns   to this excursion the non-zero matrix 
\begin{equation} \label{eq: P12I}
\mathfrak{P}_{1, 2} - \mathcal{I} \, =\, \left ( \begin{array}{ccc} -1 & 1 & 0 \\ 1 & -1 & 0 \\ 0  & 0 & 0 \\ \end{array} \right) 
\end{equation}
with probability $\,1/2\,,$ or the $\,{\bm 0}\,$ matrix with probability $\,1/2\,$.  
 
\smallskip
\noindent
  {\bf (ii)} On the interval $\,\mathcal C_{\ell}^{G} \cap \mathcal C^{H}_{m}\,$ of the excursion which starts from a point in $\,\mathfrak{Z}^H\,$ (i.e., $\, \inf \mathcal C_{\ell}^{G} < \inf \mathcal C_{m}^{H}\,$), the simple process $\, {\bm \eta}(\cdot) \,$ assigns  to this excursion  the non-zero matrix 
\begin{equation} 
\label{eq: P23I}
\mathfrak{P}_{2, 3} - \mathcal{I} \, =\, \left ( \begin{array}{ccc} 0 & 0 & 0 \\ 0 & -1 & 1 \\ 0  & 1 & -1 \\ \end{array} \right) 
\end{equation}
with probability $\,1/2\,,$ or the $\,{\bm 0}\,$ matrix with probability $\,1/2\,$. 

\smallskip
\noindent
{\bf (iii)} When the excursion starts from the corner $\, \{t \ge 0: G(t) = H(t) = 0\}\,$ (that is, $\, \inf \mathcal C_{\ell}^{G} \, =\,  \inf \mathcal C_{m}^{H}\,$ for some $\,\ell\,$ and $\,m$),  then the process $\, {\bm \eta}  (\cdot) \,$ assigns the $\,{\bm 0}\,$ matrix  to this excursion.

\medskip
The value $\,Z( t) \,$ of the matrix-valued process  defined in (\ref{eq: Z}) represents the product of (countably many, random) permutations listed in (\ref{eq: permutations}), until time $\,t \ge 0\,$. Since   products of permutations are also   permutations, the process $\,Z(\cdot)\,$ takes values in the collection of permutation matrices. 

\smallskip
Finally, with the rank process $\,R(\cdot) =(R^{}_{1}(\cdot), R^{}_{2}(\cdot), R^{}_{3}(\cdot))'\,$ constructed as in (\ref{eq: RX1-3b}), we define the vector  process 
\begin{equation} 
\label{eq: XZR}
X(\cdot) \, \equiv \, \big( X_{1}(\cdot), X_{2}(\cdot), X_{3}(\cdot) \big)^{\prime}\, :=\,  Z(\cdot) R^{}(\cdot) \, . 
\end{equation}
$\bullet \,$ We introduce at this point the enlarged filtration $\,\mathbb{F}  :=  \{  \mathfrak F (t), \,t \ge 0 \}  \,$ via $\, \mathfrak F(t)   :=  \widetilde{\mathfrak F  }(t) \vee    \mathfrak{F}^{Z}(t) \,$. Since the sequences of  I.I.D. random   matrices $\, \{ \Xi^{G}_{\ell, m};\, \ell \in \mathbb N, \,m \in \mathbb N\} \,$  and $\, \{ \Xi^{H}_{\ell, m};\, \ell \in \mathbb N, \,m \in \mathbb N\} \,$ are independent of $\, \mathbb F^{R}\,$,  it can be shown as in \citep{MR2494646} 
that both  triples $\, (W_{1}(\cdot), W_{2}(\cdot), W_{3}(\cdot)) \,$ and $\, (R_{1}(\cdot), R_{2}(\cdot), R_{3}(\cdot)) \,$ are semimartingales  of this enlarged filtration $\, \mathbb F\,$. 

\smallskip
We can   state now  and prove the following  result.

\begin{thm} 
\label{basic_result}
On the filtered probability space $\, (\Omega, \mathcal F,   \mathbb P ), \mathbb{F}   =   \{  \mathfrak F (t)  \}_{t \ge 0} \,$ just constructed, and with the process $X(\cdot)$ as in (3.27), there exists a three-dimensional Brownian motion $B(\cdot) =  ( B_{1}(\cdot), B_{2}(\cdot), B_{3}(\cdot) )^{\prime}$  such that $\, (\Omega, \mathcal F,   \mathbb P ), \,\mathbb{F}   =   \{  \mathfrak F (t)  \}_{t \ge 0} \, , \, ( X(\cdot), B(\cdot)  )\,$ is  a weak solution for the   system (\ref{A1}), 
(\ref{A2}). 
 
  This solution is unique in the sense of the probability distribution;  thus, $X(\cdot)$ has the strong \textsc{Markov} property. 
  It is also pathwise unique and strong, up until the first time $\,\mathcal S\,$ a triple collision occurs; 
however, both pathwise uniqueness and strength  fail  after time $\,\mathcal S\,.$  
\end{thm}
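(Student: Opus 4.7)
The plan consists of four stages corresponding to the four assertions of the theorem. \emph{Stage 1} shows that $X(\cdot)=Z(\cdot)R(\cdot)$ is continuous despite the jumps of $Z(\cdot)$: by \eqref{eq: eta} each jump of $Z$ equals either $\mathfrak{P}_{1,2}-\mathcal{I}$ (triggered only when $G(t)=0$, i.e.\ $R_1(t)=R_2(t)$) or $\mathfrak{P}_{2,3}-\mathcal{I}$ (only when $H(t)=0$, i.e.\ $R_2(t)=R_3(t)$); at each such instant the two components of $R$ being permuted are already equal, so the jumps of $ZR$ cancel. Consequently $X(\cdot)$ is continuous, and on each excursion of $(G,H)$ off the axes, $X_i$ coincides with a fixed rank $R_{\sigma(i)}$.

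\emph{Stage 2} realizes $X(\cdot)$ as a weak solution. Introduce the continuous local martingales
\[
M_i(t)\;:=\;X_i(t)-x_i-\sum_{k=1}^3 \delta_k\int_0^t \mathbf{1}_{\{X_i(s)=R_k(s)\}}\,\mathrm{d}s\,,\qquad i=1,2,3,
\]
and use the excursion-wise identification $X_i=R_{\sigma(i)}$ together with the rank representations \eqref{eq: RX1-3} and the flatness properties \eqref{A9}--\eqref{A11} of $A(\cdot),\Gamma(\cdot)$ to establish that $M_i(\cdot)=\int_0^\cdot\mathbf{1}_{\{X_i=R_1\}}\,\mathrm{d}W_1+\int_0^\cdot \mathbf{1}_{\{X_i=R_3\}}\,\mathrm{d}W_3$; the cross-variations $\langle M_i,M_j\rangle$ vanish for $i\neq j$ by virtue of \eqref{A2}. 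After enlarging the space with three independent standard Brownian motions $\tilde B_1,\tilde B_2,\tilde B_3$, set $B_i:=M_i+\int_0^\cdot \mathbf{1}_{\{X_i=R_2\}}\,\mathrm{d}\tilde B_i$; the resulting triple $(B_1,B_2,B_3)$ is a 3-D standard Brownian motion by \textsc{P.~L\'evy}'s theorem, and back-substitution recovers \eqref{A1}. The non-stickiness condition in \eqref{A2} is \eqref{A10}, while $L^{R_1-R_3}\equiv 0$ is \eqref{eq: no LTat0}.

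\emph{Stage 3} addresses uniqueness in law globally, and pathwise uniqueness plus strong existence up to $\mathcal{S}$. On $[0,\mathcal{S})$ there are no triple collisions; between consecutive binary collisions one reduces to a non-degenerate two-particle problem, which admits a pathwise unique strong solution by Theorem 4.1 of \citep{MR3055262}, and concatenating these pieces as in \S\ref{3.3} yields a pathwise unique strong solution on $[0,\mathcal{S})$. For uniqueness in law on all of $[0,\infty)$, any weak solution produces ranks $R(\cdot)$ satisfying the \textsc{Harrison--Reiman} SRBM system \eqref{A5}--\eqref{A6} driven by the Brownian motions $W_1,W_3$ extracted through \eqref{W1,3}, whose law is uniquely determined; the conditional law of $X(\cdot)$ given $R(\cdot)$ is then fixed by the I.I.D.\ fair coin-flip prescription \eqref{eq: eta}--\eqref{eq: Z}. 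Strong Markovity is inherited from uniqueness in law by the standard argument.

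\emph{Stage 4} is the main obstacle: the failure of pathwise uniqueness and strength after $\mathcal{S}$. The strategy is to apply the \textsc{Yamada--Watanabe} principle in contrapositive: since weak existence and uniqueness in law have already been established, showing that no strong solution exists on $\{\mathcal{S}<\infty\}$ automatically forces pathwise uniqueness to fail. I would therefore show that $\sigma(X(\mathcal{S}+\cdot))$ strictly contains $\sigma(B)$ by exhibiting a measure-preserving involution on $\Omega$ that flips a single post-$\mathcal{S}$ coin $\Xi^G_{\ell^\ast,m^\ast}$ and compensates via a matching relabelling of the auxiliary Brownian motions $\tilde B_i$, thereby leaving $B$ invariant while sending $X$ to a distinct trajectory. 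The technical crux, and the main obstacle, is to set up such an involution consistently: this hinges on the independence of the coin-flips from $\mathbb{F}^{(W_1,W_3)}=\mathbb{F}^R$, together with the fact that each $\tilde B_i$ enters $B_i$ only on the set $\{X_i=R_2\}$, which a single transposition merely permutes among the names. Once this symmetry is in place, the conditional law of the post-$\mathcal{S}$ labelling given $\mathcal{F}^B_\infty$ is non-degenerate, so strong existence fails and Yamada--Watanabe delivers the failure of pathwise uniqueness.
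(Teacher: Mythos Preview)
Your Stages~1--2 are close in spirit to the paper, but you skip the one genuinely delicate step in existence: why the local-time contributions to $X_i$ vanish. Continuity of $X=ZR$ (your Stage~1) does not by itself yield the semimartingale decomposition you assert for $M_i$, because $L^G$ and $L^H$ are carried on the zero sets $\mathfrak{Z}^G,\mathfrak{Z}^H$, where the excursion-wise identification breaks down and the permutation $Z$ jumps infinitely often in every neighbourhood. The paper handles this by the $\varepsilon$-approximation $X^\varepsilon=Z^\varepsilon R$, the product rule, and the downcrossings characterisation of local time (Theorem~VI.1.10 in Revuz--Yor), showing that the limit of $\int \mathrm{d}Z^\varepsilon\cdot R$ is \emph{exactly} the negative of the local-time part of $\int Z\,\mathrm{d}R$; this cancellation is the heart of the existence proof and is not a consequence of the flatness properties \eqref{A9}--\eqref{A11} alone.

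The substantive gap is in Stage~3. You write that ``the conditional law of $X(\cdot)$ given $R(\cdot)$ is then fixed by the I.I.D.\ fair coin-flip prescription \eqref{eq: eta}--\eqref{eq: Z}''. But this prescription is how \emph{one particular} weak solution was built; an arbitrary weak solution of \eqref{A1}--\eqref{A2} is under no obligation to allocate names to ranks by independent fair coins, and that is precisely what uniqueness in law must establish. The paper's argument is different and not circular: the \emph{system} \eqref{A1}--\eqref{A2} is invariant under permutations of the index set, and at time $\mathcal{S}$ all three particles coincide, so for \emph{any} weak solution $\mathbb{P}\big(X_i(t)=R_k^X(t)\mid t>\mathcal{S}\big)=1/3$ for every $(i,k)$. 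Together with the uniquely determined law of the rank process (the Harrison--Reiman SRBM) and the Markov structure, this pins down the finite-dimensional distributions after~$\mathcal{S}$.

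In Stage~4 you run the logic in the opposite direction from the paper. The paper first exhibits a second solution by randomly permuting names immediately after $\mathcal{S}$---which is admissible because $L^{G+H}\equiv 0$ imposes no local-time constraint tying the names down at $\mathcal{S}$---so pathwise uniqueness fails; it then invokes the dual Yamada--Watanabe theorem of Engelbert and Cherny (uniqueness in law $+$ strong existence $\Rightarrow$ pathwise uniqueness) to conclude that strength fails. Your route (first disprove strength via an involution, then classical Yamada--Watanabe) is logically valid, but the involution you describe---flipping one post-$\mathcal{S}$ coin while compensating in the auxiliary $\tilde B_i$'s so that $B$ is preserved---is exactly the step you flag as the main obstacle, and the paper's direction avoids constructing it.
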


\begin{proof}  We split the argument in three distinct parts. 

\smallskip
\noindent
{\it (i) Existence:}  We   show that, on a suitable filtered probability space with   independent Brownian motions $B_1 (\cdot), B_2 (\cdot), B_3 (\cdot)$, the process $\,X(\cdot) \,$ defined by  (\ref{eq: XZR}), with $ Z(\cdot)  $ in (\ref{eq: Z}) and $  {\bm \eta}  (\cdot) $ in (\ref{eq: eta}),  satisfies the dynamics (\ref{A1}) and the requirement (\ref{A2}).   
The proof is based on the technique of unfolding   semimartingales,   in the manner of \citep{MR3795064} 
for \textsc{Walsh} semimartingales.  

We start by defining recursively the sequence $\,\{ \tau^{\varepsilon}_{\ell}, \ell \in \mathbb N_{0}\}\,$ of stopping times as $\, \tau_{0}^{\varepsilon} \, :=\,  0\,$, 
\begin{equation} \label{eq: tauZe}
\begin{split}
\tau^{\varepsilon}_{2\ell + 1} \, :=\, &  \inf \{ t > \tau^{\varepsilon}_{2\ell} : G(t) \wedge H(t) \, \ge   \varepsilon \} \, , \\ 
\tau^{\varepsilon}_{2\ell + 2} \, :=\,  & \inf \{ t > \tau^{\varepsilon}_{2\ell + 1} : G(t) \wedge H(t) \, =\, 0 \} \, , 
\end{split}
\end{equation}
along with the approximating processes $\,X^{\varepsilon}(\cdot) \, :=\, Z^{\varepsilon}(\cdot) R^{}(\cdot) \,,$ where  
\begin{equation} 
\label{eq: Zeps}
Z^{\varepsilon}(\cdot) \, =\,  \mathcal{I} +\int^{\cdot}_{0} Z^{\varepsilon} (t) {\mathrm d} {\bm \eta}^{\varepsilon}(t)\, , ~~   ~\qquad {\bm \eta}^{\varepsilon}(\cdot) \, :=\,  \sum_{\ell \in \mathbb N} {\bm \eta}(\cdot) \, {\bf 1}_{[\tau^{\varepsilon}_{2\ell+1}, \tau^{\varepsilon}_{2\ell+2})}(\cdot) \, 
\end{equation}
for every $\, \varepsilon \in ( 0,1)$. For these approximating processes,    the product rule gives
\begin{equation} \label{eq: XvareProd}
 X^{\varepsilon}(\cdot) \, =\, \int^{\cdot}_{0}{\mathrm d} \big( Z^{\varepsilon}(t) R^{}(t) \big)  \, =\, \int^{\cdot}_{0}Z^{\varepsilon}(t) \, {\mathrm d} R^{}(t) + \int^{\cdot}_{0} {\mathrm d} Z^{\varepsilon}(t) \,    R^{}(t) \,.
\end{equation}
Now, as $\,\varepsilon \downarrow 0\,$, the process $\,X^{\varepsilon}(\cdot)\,$ converges to $\,X(\cdot) \, =\, Z(\cdot) R^{}(\cdot)\,$ in (\ref{eq: XZR}), and the first term on the right-hand side converges  in probability to the stochastic integral $\,\int^{\cdot}_{0} Z(t)\, {\mathrm d} R^{}(t)\,$. 

 \medskip
Let us analyze the semimartingale dynamics of this last integral. Since  $\,Z(\cdot)\,$ is a permutation-matrix-valued process,     the absolutely continuous  finite-variation (``drift")  components of $\,\int^{\cdot}_{0} Z(t)\, {\mathrm d} R^{}(t)\,$ are 
\[
\int^{\cdot}_{0} Z(t) \left( \begin{array}{c} \delta_{1} \\ \delta_{2} \\ \delta_{3} \\ \end{array} \right )  {\mathrm d} t \, =\, \int^{\cdot}_{0}  \sum_{k=1}^{3}  \left( \begin{array}{c} \delta_{k} {\bf 1}_{\{X_{1}(t) \, =\, R^{}_{k}(t) \}} \\ \delta_{k} {\bf 1}_{\{X_{2}(t) \, =\, R^{}_{k}(t) \}} \\ \delta_{k} {\bf 1}_{\{X_{3}(t) \, =\, R^{}_{k}(t) \}} \end{array} \right)  {\mathrm d} t \,  .
\]
  Similarly, the martingale (``noise") components of $\,\int^{\cdot}_{0} Z(t)\, {\mathrm d} R^{}(t)\,$ are given by 
\begin{equation} 
\begin{split}
\int^{\cdot}_{0}Z(t) \left( \begin{array}{c} {\mathrm d} W_{1}(t) \\ 0 \\  {\mathrm d} W_{3}(t) \\ \end{array} \right) \, &=\,  \int^{\cdot}_{0} \left( \begin{array}{c} {\bf 1}_{\{X_{1}(t) \, =\,  R^{}_{1}(t) \}} {\mathrm d} W_{1}(t) + {\bf 1}_{\{X_{1}(t) \, =\,  R^{}_{3}(t) \}}  {\mathrm d}W_{3}(t) \\
 {\bf 1}_{\{X_{2}(t) \, =\,  R^{}_{1}(t) \}} {\mathrm d} W_{1}(t) + {\bf 1}_{\{X_{2}(t) \, =\,  R^{}_{3}(t) \}} {\mathrm d} W_{3}(t) \\
 {\bf 1}_{\{X_{3}(t) \, =\,  R^{}_{1}(t) \}} {\mathrm d} W_{1}(t) + {\bf 1}_{\{X_{3}(t) \, =\,  R^{}_{3}(t) \}} {\mathrm d} W_{3}(t) \\ \end{array}  \right)
\\
\, &=\, \int^{\cdot}_{0} \left ( \begin{array}{c}  (\1_{ \{ X_1 (t) = R_{1} (t)\} } + \1_{ \{ X_1 (t) = R_{3} (t)\} } )   \,{\mathrm d}  B_1 (t)   \\  (\1_{ \{ X_2 (t) = R_{1} (t)\} } + \1_{ \{ X_2 (t) = R_{3} (t)\} } ) \,  {\mathrm d}  B_2 (t) \\
( \1_{ \{ X_3 (t) = R_{1} (t)\} } + \1_{ \{ X_3 (t) = R_{3} (t)\} } ) \,  {\mathrm d}  B_3 (t) \\ \end{array} \right);
\end{split}
\end{equation} 
 here, on account of the P. \textsc{L\'evy} theorem,   the processes 
\begin{equation} 
\label{the_bees}
B_{i}(\cdot) \, :=\,  \sum_{k=1}^{3} \int^{\cdot}_{0}{\bf 1}_{\{X_{i}(t) \, =\, R^{}_{k}(t)\}} {\mathrm d} W_{k}(t) \,, \qquad i \, =\,  1, 2, 3
\end{equation} 
are independent $\, \mathbb F-$Brownian motions     (recall   that $\, W_{1}(\cdot),  W_{2}(\cdot), W_{3}(\cdot)\,$ are independent $\, \mathbb F-$Brownian motions). Finally, the local time components contributed by the term $\,\int^{\cdot}_{0} Z(t) {\mathrm d} R (t) \,$ are  
\begin{equation} 
\label{eq: LTZGH} 
\begin{split}
& \int^{\cdot}_{0} Z(t) \left( \begin{array}{c} 
(1/2) \, {\mathrm d} L^{G}(t) \\ 
- (1/2) \, {\mathrm d} L^{G}(t) + (1/2)\,  {\mathrm d} L^{H}(t)  \\
- (1/2)\,  {\mathrm d} L^{H}(t) \\ \end{array} \right) 
\, =\,
 \frac{1}{2} \int^{\cdot}_{0}  Z(t)  \left(  \left( \begin{array}{c} 1 \\ -1 \\ 0 \\ \end{array} \right ) {\mathrm d} L^{G}(t) +  \left( \begin{array}{c} 0 \\ 1 \\ -1 \\ \end{array} \right) {\mathrm d} L^{H}(t) \right)  . 
\end{split} 
\end{equation}

On the other hand, in the limit as $\,\varepsilon \downarrow 0\,$ of the term $\, \int^{\cdot}_{0} {\mathrm d} Z^{\varepsilon} (t) R (t) \,$ in \eqref{eq: XvareProd},   local time components appear and cancel  those in (\ref{eq: LTZGH}). More precisely, by (\ref{eq: Z}), we have 
\begin{equation} 
\label{eq: ZeRX}
\int^{T}_{0} {\mathrm d} Z^{\varepsilon}(t)    R^{}(t) =  \int^{T}_{0} Z^{\varepsilon}(t) \big({\mathrm d} {\bm \eta}^{\varepsilon} (t)\big) R^{}(t) \, ,\qquad \int^{T}_{0} {\mathrm d} {\bm \eta}^{\varepsilon}(t) R^{}(t) =  \sum_{\{\ell \,  : \, \tau^{\varepsilon}_{2\ell+1} \le T\}} {\bm \eta}^{\varepsilon}( \tau^{\varepsilon}_{2\ell+1})  R^{}(\tau^{\varepsilon}_{2\ell+1})   .
\end{equation}
The random vector $\, {\bm \eta}^{\varepsilon}( \tau^{\varepsilon}_{2\ell+1}) \, R^{}(\tau^{\varepsilon}_{2\ell+1}) \,$ can take values 
\[
\big(\mathfrak{P}_{1,2} - \mathcal{I} \big)R^{}(\tau_{2\ell+1}^{\varepsilon})\, =\,  \left( \begin{array}{c} - R^{}_{1}(\tau_{2\ell+1}^{\varepsilon} )  +  R^{}_{2}(\tau_{2\ell+1}^{\varepsilon} )  \\ R^{}_{1}(\tau_{2\ell+1}^{\varepsilon} ) - R^{}_{2}(\tau_{2\ell+1}^{\varepsilon} )  \\ 0 \\  \end{array} \right) \, =\,  \varepsilon \left( \begin{array}{c} -1 \\ 1 \\ 0 \\ \end{array} \right ) \,  
\]
or $\,0 ,$ each with equal probability $\,1/2\,$, if it corresponds to the excursion from  $ \mathfrak{Z}^G$ for sufficiently small $\,\varepsilon > 0 \,$; and it can take values 
\[
\big(\mathfrak{P}_{2,3} - \mathcal{I} \big) R^{}(\tau_{2\ell+1}^{\varepsilon} )\, =\, \left( \begin{array}{c} 0  \\  - R^{}_{2}(\tau_{2\ell+1}^{\varepsilon} )  + R^{}_{3}(\tau_{2\ell+1}^{\varepsilon} )  \\ R^{}_{2}(\tau_{2\ell+1}^{\varepsilon} )  - R^{}_{3}(\tau_{2\ell+1}^{\varepsilon} )  \\ \end{array} \right) 
 \, =\, \varepsilon \left( \begin{array}{c} 0 \\ -1 \\ 1 \\ \end{array} \right) 
\]
or $\,0 , $ each with probability $\,1/2\,$, if it corresponds to the excursion from  $ \mathfrak{Z}^H$, for sufficiently small $\,\varepsilon > 0 \,. $ We are deploying here (\ref{eq: eta}), (\ref{eq: P12I})-(\ref{eq: P23I}), and the   continuity of the sample paths of $\, R (\cdot)\,$.

 \smallskip
We recall now the excursion-theoretic characterization of  semimartingale  local time (Theorem VI.1.10 of \citep{MR1725357}
): for a  continuous scalar semimartingale, the rescaled number of its ``downcrossings'' approximates in a   weak-law-of-large-numbers fashion the local time accumulated at the origin  (see   
\citep{MR2494646},   
\citep{MR3332719}, 
and the proof of Theorem 2.1 in \citep{MR3795064}, 
for   crucial applications of this result).  Applying  this approximation to each set of excursions from $\,\mathfrak Z^{G}\,$ and $\,\mathfrak Z^{H}\,$ in the summation of (\ref{eq: ZeRX}), for  $\,G(\cdot)\,$ and $\,H(\cdot)\,,$  respectively,   leads to the  limiting behavior 
\[
\int^{T}_{0} {\mathrm d} {\bm \eta}^{\varepsilon}(t) R^{}(t) \, \,\xrightarrow[\varepsilon \downarrow 0]{} \, \,\int^{T}_{0} \frac{1}{2} \left( \begin{array}{c} -1 \\ 1 \\ 0 \\ \end{array} \right ) {\mathrm d} L^{G}(t) + \int^{T}_{0}\frac{1}{2} \left( \begin{array}{c} 0 \\ -1 \\ 1 \\ \end{array} \right) {\mathrm d} L^{H}(t)  
\]
 in probability. Combining this limit with (\ref{eq: ZeRX}), we obtain the convergence in probability 
\begin{equation} 
\label{eq: ZeRXT}
\int^{T}_{0} {\mathrm d} Z^{\varepsilon}(t) R^{}(t) \, \,\xrightarrow[\varepsilon \downarrow 0]{} \,\, \frac{1}{2} \int^{T}_{0} Z(t) \left[  \left( \begin{array}{c} -1 \\ 1 \\ 0 \\ \end{array} \right ) {\mathrm d} L^{G}(t) +  \left( \begin{array}{c} 0 \\ -1 \\ 1 \\ \end{array} \right) {\mathrm d} L^{H}(t)\right]    
\end{equation}
 and observe that the local time components in (\ref{eq: LTZGH}) are cancelled by the limit (\ref{eq: ZeRXT}) of $\, \int^{T}_{0}{\mathrm d} Z^{\varepsilon} (t) R^{}(t)\,$.

We conclude that the process $\, X(\cdot) \,$  in (\ref{eq: XZR}) satisfies the requirements of   (\ref{A1})-(\ref{A2}), and yields  a weak solution $\, (\Omega, \mathcal F,   \mathbb P ), \,\mathbb{F}   =   \{  \mathfrak F (t)  \}_{t \ge 0}  , \, ( X(\cdot), B(\cdot)  )\,$   for this system as described above.      

\smallskip
\noindent  {\it (ii) Uniqueness in Distribution:}  Suppose that there are two probability measures $\, \mathbb P_{1},  \, \mathbb P_{2}  $  under which $\, X(\cdot) \,$ in (\ref{eq: XZR}) satisfies (\ref{A1})-(\ref{A2}), and $  B(\cdot)  $  is   three-dimensional Brownian motion. For $\,j \, =\,  1, 2\,$ we have $\, \mathbb P_{j}( \mathcal S < \infty) \, =\,  1 \,$.  
By   analogy with the  discussion in \S\,\ref{3.3}, up to the first  triple collision time $\, \mathcal S\,$   in (\ref{exp}) this solution is pathwise unique, thus also {\it strong;} that is, adapted to the filtration $\,\mathbb{F}^{\,(B_1, B_2, B_3)}\,$ generated by the 3-D Brownian motion $(B_1 (\cdot), B_2 (\cdot),  B_3 (\cdot)).$   Hence, its probability distribution is uniquely determined over the interval  $\,[0, \mathcal S);$  in other words, $\, \mathbb P_{1} \equiv \mathbb P_{2}   \,$ on $\, \mathcal F ({\mathcal S-})\,$.

At time $  t =   \mathcal S $  we have $\, X_{1}(\mathcal S) =  X_{2}(\mathcal S)=  X_{3}(\mathcal S) \,,$ $\, \mathbb P_{j}\,$-a.e., for $  j =  1, 2,$  and   ties are resolved in favor of the lowest index.  For $\, t > \mathcal S\,,$ every given ``name" appears in each rank equally likely,   since the system (\ref{A1})-(\ref{A2}) is invariant under permutations; in particular, for every $\, t > 0 \,$, we have 
\begin{equation} \label{eq: equally likely}
\mathbb P_{j} \big( X_{i}(t) \, =\,  R_{k}^{X}(t) \, \vert \, \, t > \mathcal S \big) \, =\,   
1/3\, ; \qquad (i \, , k ) \, \in \, \{ 1, 2, 3 \}\, , \, ~~j \, =\,  1, 2 \, .  
\end{equation}
Here the probability distribution of the rank process $\, R_{k}^{X}(\cdot) \,$, $\, k \, =\, 1, 2, 3\,$ in (\ref{eq: RX1-3}) is uniquely determined through (\ref{eq: RX1-3b}) by the probability distribution of the reflected Brownian motion $\, (G(\cdot), H(\cdot)) \,$ in subsection \ref{sec: 6.2}. Since the probability distribution of $\, X(t) \,$, $\, t > \mathcal S\,$ is determined by the  process $\, R^{X}(\cdot) \,$ of ranks and the name-rank correspondence, it is uniquely determined for $\, t \ge \mathcal S\,$.

Standard arguments based on the \textsc{Markov} property, allow us now to extend these considerations to the finite-dimensional distributions:  $\, \mathbb P_{1}(\,\cdot \cap \{ t \ge \mathcal S\}) \equiv \mathbb P_{2}(\,\cdot \cap \{ t \ge \mathcal S\}) \,$ for every $\, t > 0 \,$. Combining these considerations with the uniqueness in distribution before time $  \mathcal S$, we deduce that the weak solution we constructed is unique  in distribution, that is, $  \mathbb P_{1}   \equiv \mathbb P_{2}  \, $ on $\, \mathcal F (\infty)\,$.

\smallskip 
\noindent {\it (iii) \,Failure of Pathwise Uniqueness, and of Strength:} In the construction of the matrix-valued processes $\, {\bm \eta} (\cdot) \,$ in (\ref{eq: eta}) and $\, Z(\cdot) \,$ in (\ref{eq: Z}), the excursion starting from the corner $\, \{t \ge 0: G(t) = H(t) = 0 \}\,$ {\it does not} appear explicitly, because the triple collision local time $\,L^{G+H}(\cdot) \, =\,  L^{R_{1}^{X} - R_{3}^{X}} (\cdot) \,$ is identically equal to zero  as in (\ref{eq: no LTat0}). The corresponding construction of $  X(\cdot) $ does not change the name-rank correspondence immediately before or after the triple collision. Since the triple collision local time $\, L^{G+H}(\cdot) \,$ does not grow,  one may perturb in the above construction the weak solution,  by randomly permuting the names of particles  immediately after the triple collision time $\, \mathcal S\,$ --- and still obtain the same stochastic dynamics (\ref{A1})-(\ref{A2}),   hence the same probability distribution for 
$  X(\cdot) $. 

Then the resulting sample path of $  X(\cdot)  $ is different from the original sample path, so pathwise uniqueness fails. But here we have uniqueness in distribution, so the solution of (\ref{A1})-(\ref{A2}) cannot be strong after the first triple collision  $\, \mathcal S\,;$ this is because uniqueness in distribution,  coupled with strong existence, implies pathwise uniqueness (the ``dual  \textsc{Yamada-Watanabe} theorem"  of 
\citep{MR1128494}, 
\citep{MR1978664}). 
\end{proof}

To the best of our knowledge, the result of Theorem \ref{basic_result}, that the solution ceases to be strong after  the first triple collision, is the first of its kind for competing particle systems. We conjecture that this feature holds in general such systems with $\,n \ge 3\,$ particles. Figure \ref{Fig: Ballistic} shows simulated paths of particles in \eqref{A1} with $\, \delta_1 = -0.5$, $\,\delta_2 =   0\,$ and $\, \delta_3 = 0.5\,$, based on the construction discussed in Theorem \ref{basic_result}. 


 \begin{figure}  
  \begin{center}
    \includegraphics[scale=0.25
    ]{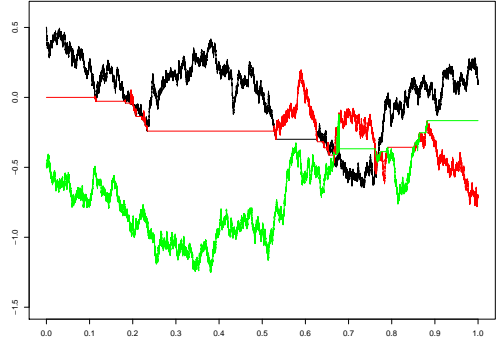}
    \caption{Simulated processes; Black $\,=X_1(\cdot)\,$,  Red $\,=X_2 (\cdot)\,$,  Green $\,=X_3(\cdot)\,$. Here we have taken $\, \delta_1 = -0.5$, $\,\delta_2 =   0\,$ and $\, \delta_3 = 0.5\,$ in (\ref{A1}). }\label{Fig: Ballistic}
  \end{center}
\end{figure}


\begin{rem}
In \eqref{A3}-\eqref{A4},  as well as    in the construction  (\ref{eq: RX1-3b})-(\ref{A16}) of the ranks,    only the Brownian motions $W_1 (\cdot)$ and $W_3 (\cdot)$ appear. By contrast, {\it all three} ``rank-specific" Brownian motions  $W_1 (\cdot)$, $W_2 (\cdot)$ and $W_3 (\cdot)$ are needed   in \eqref{the_bees} for constructing the  ``driving" or ``name-specific"   Brownian motions $B_1 (\cdot)$, $B_2 (\cdot)$, $B_3 (\cdot)$. This is common in situations where the quadratic variation of a driving local martingale can vanish,  and an additional,  independent randomness is needed to ``re-ignite" the motion --- as for instance in the proof of the \textsc{Doob} representation of continuous local martingales with quadratic variation which is absolutely continuous with respect to \textsc{Lebesgue} measure.  
\end{rem}

\begin{rem}
[Some Open Questions] 
The above approach to   (\ref{A1})-(\ref{A2}) is  akin to the construction of the \textsc{Walsh} Brownian motion, and to the splitting stochastic flow of the \textsc{Tanaka} equation. It would be interesting to examine the solvability of (\ref{A1})-(\ref{A2}) via the spectral measures of   classical/non-classical noises, and via the theory of stochastic flows  developed by 
\citep{MR1487755}, 
\citep{MR1816931}, 
\citep{MR1943894} 
and 
\citep{MR2060298}, \citep{MR2052863} 
  (see also \citep{MR2553243} 
and its references). It would also be quite interesting to determine   whether the filtration of the post$-{\cal S}$ process $X(\cdot)$ might fail, in the spirit of \citep{MR1487755}, 
to be generated by {\it any} Brownian motion  of {\it any} dimension. We leave these issues to further research. 
\end{rem}

\subsection{Local Time Considerations: The Case of Equal Drifts} 
\label{}

When $ \delta_{1} = \delta_{2} = \delta_{3} ,$  it is possible to describe the local behavior of the semimartingale reflected Brownian motion $ (G(\cdot), H(\cdot)) $ at the corner of the quadrant  and  in the manner of \citep{MR890921}, 
as follows.

Let us denote by $\,(\varrhob_{\cdot}, \varthetab_{\cdot})\,$ the system (\ref{A7})-(\ref{A8}) in polar coordinate in $\,\mathbb R^{2}\,$, i.e., $\,0 \le G(\cdot) \, =\, \varrhob_{\cdot} \cos (\varthetab_{\cdot})\,$, $\, 0 \le H(\cdot) \, =\, \varrhob_{\cdot} \sin (\varthetab_{\cdot})\,$. In the notation of \citep{MR792398}, 
this system corresponds to planar Brownian motion reflected on the faces of the nonnegative quadrant at angles  $\,\theta_{1} \, \equiv \, \theta_{2} \, :=\, \arctan (1/2)\,$ relative to the interior normals there, thus 
$$
\alpha \,:=\, \frac{\,2(\theta_{1} + \theta_{2}) \,}{ \pi}\, =\, \frac{\,2 \,}{ \pi} \,\arctan \big(4/3 \big) \,\in\,  \left( \frac{\,1 \,}{ 2}\,, \frac{\,2 \,}{ 3} \right).
$$
From the theory of \citep{MR792398}, 
we know that the process $\,(G(\cdot), H(\cdot))\,$, started in the interior of the quadrant, hits eventually the vertex (0,0) with probability one, but does not get absorbed there: it manages to escape from the vertex, though it hits immediately the boundary of the quadrant (cf. \citep{MR890921}, 
section 3). We   define the function 
$$\,
\varphi (\rho, \theta) \, =\, \rho^{\alpha} \cos ( \alpha \theta - \theta_{1})\,, \qquad \,0 \le \rho < \infty \, , ~~~\, 0 \le \theta \le \pi/2 \,,
$$ 
and note $\, (2\, / \sqrt{5}) \le \cos(\alpha \theta - \theta_{1}) \le 1\,$ for $\, 0 \le \theta \le \pi / 2\,$.
Always with  $\,\delta_{1} = \delta_{2} = \delta_{3}\,$,   the process $\,\varphi ( \varrhob_{\cdot}, \varthetab_{\cdot})\,$ is a nonnegative, continuous local submartingale; the   continuous, adapted, non-decreasing process in its \textsc{Doob-Meyer} decomposition is a constant multiple of  
\begin{equation} 
\label{eq: LTatO}
0 \le \Lambda^{\bullet} (\cdot) \, :=\, \frac{\, \alpha (2-\alpha)\, }{2} \lim_{\varepsilon \downarrow 0}\, \varepsilon^{1 - (2/\alpha)} \int^{\cdot}_{0} \Big(\cos \big( \alpha \varthetab_{t} - \theta_{1} \big)\Big)^{(2/\alpha) - 2}\cdot {\bf 1}_{[0, \varepsilon)} \big( \varphi( \varrhob_{t}, \varthetab_{t}) \big) \, {\mathrm d} t \,,
\end{equation}
(Lemma 2.8 in \citep{MR890921}, 
p.\,305),   in the sense of convergence in probability. The   continuous, increasing, additive functional   $\, \Lambda^{\bullet} (\cdot) \,$ is supported on $\, \big\{ t \ge 0: G(t) = H(t) =0\big\}\,$.

The  expression in (\ref{eq: LTatO}) provides a measure of how this kind of occupation time grows, as a function of the angles of reflection in this quadrant. 
Since $\,\alpha < 1\,$, we deduce from (\ref{eq: LTatO}) that the semimartingale $\,\varrho_{\cdot}\,$ does not accumulate semimartingale local time   at the origin, i.e., $\,L^{\varrhob}(\cdot) \equiv 0\,$; see the Remark \ref{5.2} below. Likewise, the semimartingale $\,\varphi(\varrhob_{\cdot}, \varthetab_{\cdot})\,$ does not accumulate     local time  at the origin, i.e., $\,L^{\varphi (\varrhob, \varthetab)}(\cdot) \equiv 0\,$; and this, despite the fact that  we can find the continuous, increasing  additive functional $\, \Lambda^{\bullet} (\cdot) \,$ in (\ref{eq: LTatO}), also called {\it ``local time  for $\, \varphi (\varrhob_{\cdot}, \varthetab_{\cdot}) \,$ at the corner".} 

For a similar phenomenon in \textsc{Bessel} processes of dimension $\,\delta \in (1, 2)\,$, see \,Exercise XI\,(1.25) of 
\citep{MR1725357}, 
and Appendix A.1  of  \citep{MR2807968}. 

\begin{rem}
 \label{5.1}
Let us denote by $\,\mathbb P^{\bullet}\,$ ({\it respectively} $\,\mathbb E^{\bullet}\,$) the probability measure ({\it respectively} expectation) induced by 
(\ref{A7})-(\ref{A8}) with $\,\delta_{1} = \delta_{2} = \delta_{3}\,$, $\,G(0) = H(0) = 0\,$. Let us rescale $\,\Lambda^{\bullet}  (\cdot)\,$ by 
\[
\,L^{\bullet} (t) \, :=\, \Big( \mathbb E^{\bullet} \Big[ \int^{\infty}_{0} e^{-s} {\mathrm d} \Lambda^{\bullet}  (s) \Big] \Big)^{-1} \, \Lambda^{\bullet}  (t) \, \,; \qquad 0 \le t < \infty \, . 
\]
In this case the right continuous inverse $\,\tau^{\bullet}(u) \, :=\,  \inf \{ t \ge 0: L^{\bullet}_{t} >u\} \,$ of the map $\,t \mapsto L^{\bullet} (t)\,$ is a stable subordinator of index $\, \kappa = \alpha /  2\,$ and rate $\,1\,$ under $\,\mathbb P^{\bullet}\,$, i.e., $$\,\log \,\mathbb E^{\bullet} \big( \exp \big( - \lambda \tau^{\bullet}(u)\big)\big) \,=\, - \,u \, \lambda^{\alpha / 2} \, , \qquad \,t, u > 0\,.$$ As a result, the set $\, \big\{ t \ge 0: G(t) = H(t) =0\big\}\,$ has \textsc{Haussdorff} dimension $\, \kappa = \alpha /  2\,$, and its \textsc{Haussdorff} measure is known. For the details of excursions of the semimartingale reflected Brownian motion from the corner of the quadrant, see 
\citep{MR890921}, 
\citep{MR998631}. 
It is also shown in \citep{MR890921} 
that the measure with the density right below is invariant for the process $\,(\varrhob_{\cdot}, \varthetab_{\cdot})\,$:
$$\,
f (\rho, \theta) \, =\, \rho^{-\alpha} \cos ( \alpha \theta - \theta_{1})\,, \qquad \,0 \le \rho < \infty \, , ~~~\, 0 \le \theta \le \pi/2 \,.
$$  
\end{rem}

\begin{rem}
 \label{5.2}
Suppose that   $\,\delta_{1} = \delta_{2}= \delta_{3}\,$; that there exist a smooth function $\,\widetilde{\varphi}(\rho, \theta)\,$ for $\, 0 < \rho < \infty\,$, $\,0 \le \theta \le \pi / 2\,$ and real constants $\,r_{0} > 0\,$, $\,c_{1} > 0\,$, $\,c_{2} < 1\,$, $\,c_{3} > 0\,$, $\,p > 0\,$ such that $\, c_{0}:= (2/\alpha) - 1 - p (1-c_{2}) > 0\,$, $\, [c_{3}\, \varphi (\rho, \theta)]^{p} \le \widetilde{\varphi} (\rho, \theta) \, $ for every $\, 0 \le \rho \le r_{0}\,$, $\,0 \le \theta \le \pi \, / \, 2\,$;  and that $\, \widetilde{\varphib}_{\cdot} \, :=\, \widetilde{\varphi}(\varrhob_{\cdot}, \varthetab_{\cdot}) \,$ is a semimartingale with quadratic variation $\, \langle \widetilde{\varphib} \rangle_{\cdot}\,$ and 
\[
\int^{\cdot}_{0} {\bf 1}_{[0, r_{0})} ( \widetilde{\varphib}_{t} )\, {\mathrm d} \langle \widetilde{\varphib}\rangle_{t} \le c_{1}\int^{\cdot}_{0} {\bf 1}_{[0, r_{0})}(\widetilde{\varphib}_{t}) \cdot \lvert \widetilde{\varphi}(\varrhob_{t}, \varthetab_{t})\rvert^{c_{2}} \, {\mathrm d} t \, .
\]
Then it follows from (\ref{eq: LTatO}) that {\it the semimartingale local time $\,L^{ \widetilde{\varphi}(\varrhob, \varthetab)}(\cdot)\,$ for $\, \widetilde{\varphi}(\varrhob_{\cdot}, \varthetab_{\cdot})\,$ does not accumulate at the origin, i.e., $\,L^{ \widetilde{\varphib} }(\cdot)  \equiv L^{ \widetilde{\varphi}(\varrhob, \varthetab)}(\cdot) \equiv 0\,$. }

\medskip
Indeed, since $\, 2\, /\, \sqrt{5} \le \cos (\alpha \theta - \theta_{1})) \le 1\,$ for $\, 0 \le \theta \le \pi / 2\,$ and 
\begin{equation*}
\begin{split}
& \frac{1}{\, u \, } \int^{\cdot}_{0} {\bf 1}_{[0, u)} ( \widetilde{\varphib}_{t}) {\mathrm d} \langle \widetilde{\varphib} \rangle_{t} \le \frac{c_{1}}{\, \varepsilon^{1-c_{2}}\,} \int^{\cdot}_{0} {\bf 1}_{[0, u)} ([c_{3} \varphi(\varrhob_{t}, \varthetab_{t})]^{p}) {\mathrm d} t =  \frac{c_{1}}{\, \varepsilon^{1-c_{2}}\, } \int^{\cdot}_{0} {\bf 1}_{[0, \, \, u^{1/p}\, /\, c_{3})} (\varphi(\varrhob_{t}, \varthetab_{t})) {\mathrm d} t \\
& \, =\, \frac{c_{1} }{\, (c_{3}\, \varepsilon)^{p(1-c_{2})}\, } \int^{\cdot}_{0} {\bf 1}_{[0, u)}(\varphi(\varrhob_{t}, \varthetab_{t})) {\mathrm d} t \le c_{1} c_{3}^{-p(1-c_{2})} \varepsilon^{c_{0} + 1 - (2/\alpha)} \int^{\cdot}_{0} {\bf 1}_{[0, \varepsilon)}(\varphi(\varrhob_{t}, \varthetab_{t}))  {\mathrm d} t \, , 
\end{split}
\end{equation*}
where $\,\varepsilon:= u^{1/p}\, /\, c_{3}\,$ for $\, 0 < u \le r_{0}\,$, combining these estimates with (\ref{eq: LTatO}), letting $\, u \downarrow 0\,$, and hence $\,\varepsilon \downarrow 0\,$, we obtain the convergence in probability 
\begin{equation} \label{eq: LTrho}
L^{ \widetilde{\varphi}(\varrhob, \varthetab)}(\cdot) \, :=\, \lim_{u \downarrow 0} \frac{1}{\, 2u\, } \int^{\cdot}_{0} {\bf 1}_{[0, u)} ( \widetilde{\varphib}_{t}) {\mathrm d} \langle \widetilde{\varphib}\rangle_{t} \, =\, 0 \, . 
\end{equation}

With this claim we can verify $\,L^{\varrhob}(\cdot) \equiv 0\,$ by choosing $\,\widetilde{\varphi}(\rho, \theta) := \rho\,$ for $\, 0 \le \rho < \infty\,$, $\,0 \le \theta \le \pi/2\,$ with $\,p \, :=\,  1\, / \, \alpha > 0\,$, $\,c_{1} \, :=\, 1\,$, $\,c_{2} \, :=\, 0\,$, $\,c_{3} \, :=\, 1\,$, $\,r_{0} \, :=\, 1\,$ and $c_{0} \, :=\, (1\,/\,\alpha) - 1 > 0\, $, since $\,0 < \alpha < 1\,$. Similarly, it can be verified that $\,L^{\varphi(\varrhob, \varthetab)}(\cdot) \equiv 0\,$ because $\, \lvert \nabla \varphi (\rho, \theta) \rvert^{2} \, =\, \alpha^{2} \rho^{2\alpha - 2}\,$, $\, {\mathrm d} \langle \varphi (\varrhob, \varthetab) \rangle_{t} \, / \, {\mathrm d} t \, =\, \alpha^{2} \varrhob_{t}^{2\alpha - 2}\,$, $\,c_{1}\, :=\, (\sqrt{5}/2)^{c_{2}} \alpha^{2} > 0\,$, $\,c_{2}\, :=\, 2\alpha - 2 < 1\,$, $\,c_{3}\, =\, 1 \,$, $\,p \, :=\, 1\,$, $\,r_{0} \, =\, 1\,$, $\,c_{0} \, :=\, (2/\alpha) - 1 - (3 - 2\alpha) = (2/\alpha) + 2\alpha - 4 \approx 0.568 > 0\,$. 
\end{rem}

\begin{appendix}
%
%
\section{Middle Diffusion,  Ballistic Hedges, Skew-Elastic Collisions}
\label{skew-elastic}

Double collisions were completely ``elastic" in the systems of Sections \ref{sec1} and \ref{sec5}: when two particles there collided, they split their collision local times evenly. We   study here briefly a variant of the system (\ref{1}) --- with the same purely ballistic motions for the leader and laggard particles, and the same diffusive motion for the middle particle --- but now with ``skew-elastic" collisions, as in \citep{MR3062434}, 
between the second- and third-ranked particles. 

More precisely, we consider   in the  notation of (\ref{ranks}), (\ref{LT}), and with $  \delta_1, \,\delta_2,\,\delta_3 $, $   x_1 > x_2 > x_3 $   given real numbers, the system of equations,  first introduced and studied in \citep{FernholzER11} : 
\begin{equation}
\label{B1}
X_i (\cdot) \,=\, x_i + \sum_{k=1}^3\, \delta_k \int_0^{\, \cdot} \1_{ \{ X_i (t) = R^X_{k} (t)\} } \, \dx t  +  \int_0^{\, \cdot} \1_{ \{ X_i (t) = R^X_{2} (t)\} } \, \dx B_i (t) 
~~~~~~~~~~~~~~~~~~~~~~~~~
\end{equation}
$$
~~~~~~~~~~~~~\,~~~~~~~~~~~~~~~~~~~~ 
 + \,  \int_0^{\, \cdot} \1_{ \{ X_i (t) = R^X_{2} (t)\} } \, \dx L^{R^X_{2} - R^X_{3}} (t) +   \int_0^{\, \cdot} \1_{ \{ X_i (t) = R^X_{3} (t)\} } \, \dx L^{R^X_{2} - R^X_{3}} (t) \,
$$
for $\,i \, =\, 1, 2, 3\,$.  We shall try to find a weak solution to this system; in other words, construct   a filtered probability space $\, (\Omega, \F, \Prob),$ $\mathbb{F}= \big\{ \F (t) \big\}_{0 \le t < \infty}\,$ rich enough to accommodate  independent Brownian motions $\, B_1 (\cdot), \,B_2 (\cdot), $ $B_3 (\cdot)  \,$ and   continuous  semimartingales   $\,X_1 (\cdot), \,X_2 (\cdot), \,X_3 (\cdot)  $  so that,   with probability one,  the equations of (\ref{B1}) are satisfied,  along with  the ``non-stickiness" and ``soft triple collision" requirements 
\begin{equation}
\label{B3}
   \int_0^{\, \infty} \1_{ \{ R^X_{k} (t) = R^X_{\ell} (t)\} } \, \dx  t \,=\, 0\,, ~~~~~\forall ~~ k < \ell \,;\qquad~~  ~ L^{R^X_{1} - R^X_{3}} (\cdot) \equiv 0
    \,.
\end{equation}

\subsection{Analysis}
 \label{sec22}

Assuming that such a weak solution to the system of (\ref{B1}), (\ref{B3})   has been constructed, the ranked processes $\, R^X_k (\cdot)\,$ as in (\ref{ranks}) are continuous semimartingales with decompositions
\begin{equation}
\label{B4}
R^X_{1} (t)\,=\, x_1 + \delta_1 \, t + {1 \over \,2\,} \Lambda^{(1,2)}(t) \,, \qquad R^X_{3} (t)\,=\, x_3 + \delta_3 \, t  +   {1 \over \,2\,} \, \Lambda^{(2,3)}(t) 
\end{equation}
\begin{equation}
\label{B5}
R^X_{2} (t)\,=\, x_2 + \delta_2\, t + W(t) - {1 \over \,2\,} \, \Lambda^{(1,2)}(t) + {3 \over \,2\,} \, \Lambda^{(2,3)}(t)
\end{equation} 
 \noindent
by analogy with (\ref{RX1})-(\ref{RX2});  though also  with the clear difference, that the collision local time $\Lambda^{( 2,3)}(\cdot)$ is not split now evenly between the second- and third-ranked particles, but rather in a 1:3 proportion.  We are using here the exact same notation for the standard Brownian motion $  W(\cdot) $ as in (\ref{W2}), and  for the collision local times $\,\Lambda^{(k,\ell)}(\cdot)  \,$ as in (\ref{Lambda}).  For the gaps $\, G (\cdot)= R^X_1 (\cdot)- R^X_2 (\cdot)\,$, $  H (\cdot)= R^X_2 (\cdot)- R^X_3 (\cdot)\,$  we have   the \textsc{Skorokhod}-type  representations of the form (\ref{SkorU})-(\ref{SkorV}), now with 
$$
U(t)  = x_1 - x_2 + \big(\delta_1 - \delta_2 \big) \,t  - W(t) - {3 \over \,2\,} \, L^{H}(t)\,, \quad V(t)  = x_2 - x_3 + \big(\delta_2 - \delta_3 \big)\, t  + W(t) - {1 \over \,2\,} \, L^{G}(t)\,.
$$
 
Whereas, from the theory of the \textsc{Skorokhod} reflection problem we obtain now the relationships linking  the two local time processes $\, L^G (\cdot)  \,$ and $\, L^H (\cdot)  \,$, namely 
\begin{equation}
\label{B7}
L^G (t) \,=\, \max_{0 \le s \le t} \big( - U (s) \big)^+\,=\, \max_{0 \le s \le t} \Big(  x_2 - x_1 + \big(\delta_2-\delta_1 \big) \,s  + W(s) + {3 \over \,2\,} \, L^{H}(s) \Big)^+\,,
\end{equation}
\begin{equation}
\label{B8}
L^H (t) \,=\, \max_{0 \le s \le t} \big( - V (s) \big)^+\,=\, \max_{0 \le s \le t} \Big( x_3 - x_2+ \big(\delta_3-\delta_2 \big) \,s  - W(s) + {1 \over \,2\,} \, L^{G}(s) \Big)^+ 
\end{equation}
 $\bullet~$ 
The resulting system for the two nonnegative gap processes
\begin{equation}
\label{B9}
G (t) \,=\, x_1 - x_2 + \big(\delta_1 - \delta_2 \big) \,t  - W(t) - {3 \over \,2\,} \, L^{H}(t) +   L^{G}(t)\,, \qquad 0 \le t < \infty
\end{equation}
\begin{equation}
\label{B10}
H (t) \,=\, x_2 - x_3 + \big(\delta_2 - \delta_3 \big)\, t  + W(t) - {1 \over \,2\,} \, L^{G}(t) +   L^{H}(t)\,, \qquad 0 \le t < \infty 
\end{equation}
\noindent
 is again of the \textsc{Harrison \& Reiman} \citep{MR606992} 
 type (\ref{eq: mfrakQ}). It amounts to reflecting off the faces of the nonnegative orthant  the degenerate, two-dimensional Brownian motion $\, \mathfrak{Z} (\cdot) \,$ as in (\ref{Z}), with   drift vector $\, {\bm m} = \big(\delta_1 - \delta_2, \,\delta_2 - \delta_3\big)'\,,$  covariance matrix $\,\mathcal{C}\,$ 
as   in (\ref{mA}),   but now with reflection matrix
$$
\mathcal{R}\, :=\, \mathcal{I} - \mathcal{Q}\,, \qquad \mathcal{Q}\,  = \begin{pmatrix}
      0 &     3/2 \\
        1/2 &   0
\end{pmatrix}\,,\qquad \text{thus} \qquad \mathcal{R}^{-1} {\bm m}\, = \,2\,\begin{pmatrix}
        \,2\, \delta_1 + \delta_2 - 3\,\delta_3  \,     \\
          \delta_1 + \delta_2  - 2\, \delta_3     
\end{pmatrix} .  
$$
Here   the matrix $\, {\cal Q}\,$ has 
spectral radius strictly less than 1, and   the {\it skew-symmetry condition} 
$\,\mathcal{R} + \mathcal{R}^\prime = 2 \, \mathcal{C}\,$ 
of \citep{MR912049} 
is satisfied by these covariance and reflection matrices.

\subsection{Synthesis} 
\label{sec33}

Let us  start now with  given real numbers $\, \delta_1, \,\delta_2,\,\delta_3\,$, and $ \, x_1 > x_2 > x_3\,$, and construct   a filtered probability space $\, (\Omega, \F, \Prob),$ $\mathbb{F}= \big\{ \F (t) \big\}_{0 \le t < \infty}\,$ rich enough to support  a standard Brownian motion $\, W(\cdot)$.   
By analogy with (\ref{B7})-(\ref{B8}), we consider  the following system of equations  for two continuous, nondecreasing and adapted processes $\, A(\cdot)\,$ and $\, \Gamma (\cdot)\,$ with $\, A(0) = \Gamma (0) =0\,$:
 \begin{equation}
\label{B11}
A (t) \,=\,   \max_{0 \le s \le t} \Big( x_2 - x_1 + \big(\delta_2 - \delta_1 \big) \,s  + W(s) + {3 \over \,2\,} \, \Gamma (s) \Big)^+\,,\qquad 0 \le t <\infty
\end{equation}
 \begin{equation}
\label{B12}
\Gamma (t) \,=\,  \max_{0 \le s \le t} \Big( x_3 - x_2 + \big(\delta_3 - \delta_2 \big) \,s  - W(s) + {1 \over \,2\,} \,A(s) \Big)^+\,,\qquad 0 \le t <\infty.
\end{equation}
 
Theorem 1 of \citep{MR606992} 
 guarantees that this system has a unique continuous solution $\, ( A(\cdot), \Gamma (\cdot)) ,$   
adapted to the smallest right-continuous filtration $\,\mathbb{F}^W$ to which the driving Brownian motion $W(\cdot)$ is itself adapted.   
With this   solution in place, we construct the continuous semimartigales 
\begin{equation}
\label{B13}
U(t) := x_1 - x_2 + \big(\delta_1 - \delta_2 \big) \,t  - W(t) - {3 \over \,2\,} \, \Gamma(t)\,, \quad V(t) := x_2 - x_3 + \big(\delta_2 - \delta_3 \big)\, t  + W(t) - {1 \over \,2\,} \, A(t)\,,~~
\end{equation}
  and then ``fold" them, to obtain their \textsc{Skorokhod} reflections   
 \begin{equation}
\label{B14}
G ( t) \,:=\, U ( t) +\max_{0 \le s \le t} \big( - U (s) \big)^+ \, = \, x_1 - x_2 +\big(\delta_1 - \delta_2 \big) \,t  - W(t) - {3 \over \,2\,} \, \Gamma(t) + A(t) \, \ge \, 0
\end{equation}
\begin{equation}
\label{B15}
H ( t) \,:=\,   V ( t) +\max_{0 \le s \le t} \big( - V (s) \big)^+ \, = \, x_2 - x_3 + \big(\delta_2 - \delta_3 \big) \,t  + W(t) - {1 \over \,2\,} \, A(t) + \Gamma(t)\, \ge \, 0
\end{equation}
for $\, t \in [0, \infty)$. As before, for these two continuous, nonnegative semimartingales   the theories of the \textsc{Skorokhod} reflection problem and of semimartingale local time give, respectively,  
\begin{equation}
\label{B16}
\int_0^\infty \1_{ \{ G(t)>0\} } \, \dx A(t) \,=\,0\,, \qquad \int_0^\infty \1_{ \{ H(t)>0\} } \, \dx \Gamma (t) \,=\,0 \,,
\end{equation}
\begin{equation}
\label{B17}
\int_0^\infty \1_{ \{ G(t)=0\} } \, \dx t \,=\,0\,, \qquad \int_0^\infty \1_{ \{ H(t)=0\} } \, \dx t \,=\,0\,.
\end{equation}
We claim also  the additional properties
  \begin{equation}
\label{B18}
\int_0^\infty \1_{ \{ G(t)=0\} } \, \dx \Gamma(t) \,=\,0\,, \qquad
\int_0^\infty \1_{ \{ H(t)=0\} } \, \dx A (t) \,=\,0\,.
\end{equation}
\noindent
 Indeed, focusing on the first property (the other is handled  similarly), we see that 
$\,
\int_0^\infty \1_{ \{ G(t)=0\} } \, \dx \Gamma(t) \,=\, \int_0^\infty \1_{ \{ G(t)=H(t)=0\} } \, \dx \Gamma(t) \,=\,0\,$ 
holds,  from the second equality in (\ref{B16}) and    Theorem 1 in \citep{MR921820}.  

  \smallskip
\noindent
$\bullet~$
We need to identify the regulating processes $\, A(\cdot)\,$, $\, \Gamma (\cdot)\,$ as local times. We start  by observing
$$
L^G (\cdot) = \int_0^{\, \cdot} \1_{ \{ G(t)=0\} }\, \dx G(t) = \int_0^{\, \cdot} \1_{ \{ G(t)=0\} }\,\Big[\, \dx A(t) - \frac{\,3\,}{2} \dx \Gamma (t) - \dx W (t) + \big( \delta_1 - \delta_2\big)\, \dx t \,\Big] 
$$
from (\ref{LT}) and (\ref{B14}). The last (\textsc{Lebesgue}) and next-to-last (\textsc{It\^o}) integrals in this expression vanish on the strength of (\ref{B17}), whereas the third-to-last integral vanishes on account of (\ref{B18}); so we deduce the identification $\, L^G (\cdot) = \int_0^{\,\cdot} \1_{ \{ G(t)=0\} }\, \dx A(t) \equiv A(\cdot)\,$, where the second equality comes on the  heels of (\ref{B16}). We identify similarly   $\, L^H (\cdot)   \equiv \Gamma (\cdot)\,$.

 \smallskip
\noindent
$\bullet~$ 
By analogy with (\ref{B4})-(\ref{B5}), we construct  now the 
$\,\mathbb{F}^W-$adapted 
{\it  processes of ranks}
\begin{equation}
\label{B19}
R_{1} (t)\,:=\, x_1 + \delta_1\, t + {1 \over \,2\,} \, A(t)\,, \qquad R_{3} (t)\,:=\, x_3 + \delta_3 \, t  +   {1 \over \,2\,} \, \Gamma (t) \,,
\end{equation}
\begin{equation}
\label{B20}
R_{2} (t)\,:=\, x_2 + \delta_2 \, t + W(t) - {1 \over \,2\,} \, A(t) + {3 \over \,2\,} \, \Gamma (t)
\end{equation}
  and note   $\, R_1 (\cdot) - R_2 (\cdot) = G (\cdot)\ge 0\,$, $\, R_2 (\cdot) - R_3 (\cdot) = H (\cdot) \ge 0\,$,  thus  $\, R_1 (\cdot) \, \ge \,   R_2 (\cdot) \, \ge \,  R_3 (\cdot)   \, $ and
  $$
R_1  (t) - R_3 (t) \,=\, G(t) + H(t) \,=\,x_1 - x_3 + \big( \delta_1 - \delta_3 \big)\,t + \frac{1}{\,2\,} \big[ \, A (t) - \Gamma (t) \, \big]  .
  $$
 The continuous  process $\, R_1 (\cdot)- R_3 (\cdot) \ge 0 \,$ is of finite first variation on compact intervals,   so   its local time at the origin vanishes, as   posited in (\ref{B3}): $\,  L^{R_1    - R_3} (\cdot) \equiv 0\,$. The other properties posited there are direct consequences of (\ref{B17}).    Finally, the identifications $A(\cdot) \equiv L^G (\cdot) \equiv L^{R_1 - R_2} (\cdot)$,  $\Gamma (\cdot) \equiv L^H (\cdot) \equiv L^{R_2 - R_3} (\cdot)$ show, in conjunction with (\ref{B20}), that the rank vector process $(R_1  (\cdot), R_2  (\cdot), R_3  (\cdot))$  and the scalar, standard Brownian motion $W(\cdot)$ generate the   same filtration. 

\smallskip
\noindent
$\bullet~$
We can construct now on a suitable filtered  probability space  
independent Brownian motions $\, B_1 (\cdot),$ $B_2 (\cdot),$ $B_3 (\cdot)  \,$ and   continuous, adapted processes $X_1 (\cdot), \,X_2 (\cdot), \,X_3 (\cdot)  \,$  so that,   with probability one,  the equations of (\ref{B1}) are satisfied,  along with those   of (\ref{B3}),  up until the first time of a triple collision
\begin{equation}
\label{B22}
\mathcal{S} \,:=\, \big\{ t \ge 0 \,:\, X_1 (t) = X_2 (t) = X_3 (t)  \big\}\,,
\end{equation}
as well as $\, R^X_k (t) = R_k (t)\,$, $ \, 0 \le t < \mathcal{S}\,$ for $\, k=1,2,3\,$. Just as before, this is done by considering the particles two-by-two in the manner of \citep{MR3055258}, 
and applying the results in \citep{MR3062434}, \citep{MR3055262}. 

 A simulation of the paths of the resulting process $(R_1(\cdot), R_2 (\cdot), R_3 (\cdot))\,$ with $\, \delta_1 = -1$, $\,\delta_2 =   -2\,$ and $\, \delta_3 = -1\,$ is depicted in Figure \ref{Fig: Elastic}, reproduced here from \citep{FernholzER11}. 
{\it We believe, but have not been able to show,  that   $\, \mathbb{P} ( \mathcal{S} = \infty) =1\,$ holds in this case.}


 \begin{figure}  
  \begin{center}
    \includegraphics[scale=0.41
    ]{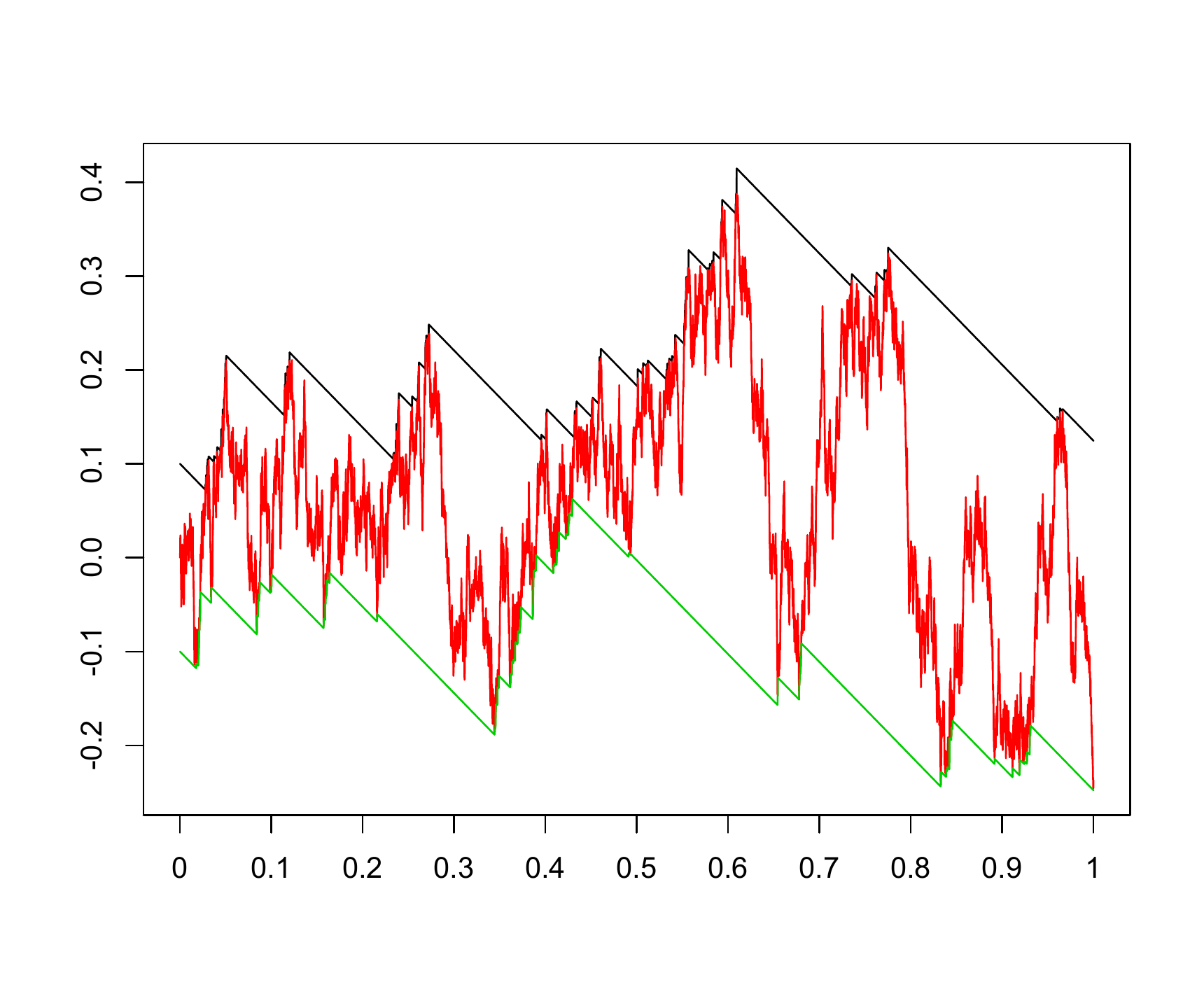}
    \caption{Simulated processes; Black $\,=R_1(\cdot)\,$,  Red $\,=R_2 (\cdot)\,$,  Green $\,=R_3(\cdot)\,$. Here we have taken $\, \delta_1 = -1$, $\,\delta_2 =   -2\,$ and $\, \delta_3 = -1\,$ in (\ref{B1}).  We are indebted to Dr. \textsc{E.R. Fernholz} for this picture.}\label{Fig: Elastic}
  \end{center}
\end{figure}


\subsection{Invariant Distribution} 
\label{sec44}

Under the conditions
\begin{equation}
\label{B23}
3 \, \delta_3 \, > \, 2\, \delta_1 + \delta_2\,\,, \qquad 2 \, \delta_3 \, > \,   \delta_1 + \delta_2\,\,,
\end{equation}
\noindent
  both components of the vector
\begin{equation}
\label{La}
{\bm \lambda} \equiv \begin{pmatrix}
        \,\lambda_1     \,     \\
          \lambda_2        
\end{pmatrix}\,:=\, 
-\, \mathcal{R}^{-1} {\bm m}\, = \,2\,\begin{pmatrix}
        \,3\,\delta_3 - 2\, \delta_1 - \delta_2     \,     \\
          2\, \delta_3 - \delta_1 - \delta_2        
\end{pmatrix}
\end{equation}
 are   positive numbers. Repeating the reasoning in subsection \ref{sec4},   
 we deduce that here again the  two-dimensional, degenerate process $\,\big(G( \cdot), H(\cdot)\big) \,$ of gaps is positive recurrent, has a unique invariant  measure $\, {\bm \pi}\,$ with $\, {\bm \pi}  ( (0,\infty)^2 ) =1\,$, and converges to this probability measure in distribution as $\, t \ra \infty\,$.

 For instance, we have  in the   present context the analogue 
 \[
\dx \big( G^2 (t) + 3G (t) H(t) + 3H^2 (t) \big)\,=\, \Big[ \, 1 - { \, 1 \,\over 2} \big( \lambda_1 G(t) + 3 \lambda_2 H(t) \big)   \Big] \dx t + \big( 3 H(t) + G(t) \big) \, \dx W(t) 
\]
of the dynamics (\ref{NoLocTimes});  and the function $\, V(g,h) = \exp \big\{ \sqrt{ g^2 + 3g h + 3 h^2\,} \, \big\},$ the analogue   of (\ref{Lyap}),  is shown as in Proposition \ref{Lyap_func} to be a \textsc{Lyapunov} function for the semimartingale reflecting Brownian motion $\, (G(\cdot), H(\cdot))$.     This process is thus seen to be positive recurrent, and to have a   unique  invariant distribution. 
  We also deduce, just as before, the strong laws of large numbers
 $$
 \lim_{t \ar} \frac{\,L^G (t)\,}{t} \, =\,\lambda_1 \,=\, 2 \, \big( 3\,\delta_3 - 2\, \delta_1 - \delta_2\big)   \,,\qquad  ~\lim_{t \ar} \frac{\,L^H (t)\,}{t}  \, =\,\lambda_2\,=\, 2 \, \big( 2\,\delta_3 -   \delta_1 - \delta_2\big)\,.
 $$

On the other hand,  the fact that the covariance matrix $\, {\cal C}\,$ and the reflection matrix $\, {\cal R}\,$ satisfy the skew-symmetry condition $\,\mathcal{R} + \mathcal{R}^\prime = 2 \, \mathcal{C}\,$ implies  that {\it the invariant probability measure for the  vector process $\,\big(G( \cdot), H(\cdot)\big) \,$ of gaps should be the product of exponentials}
\begin{equation}
\label{InvMeas}
{\bm \pi} \big( \dx g, \dx h \big) \,=\, 4\, \lambda_1 \,\lambda_2 \,  e^{\, - 2\, \lambda_1\, g\, - 2\, \lambda_2\, h} \,     \dx g\, \dx h\,, \qquad (g,h) \in (0, \infty)^2\,.
\end{equation}
This  extends the results of \citep{MR912049} \citep{MR877593} 
to the degenerate case discussed here. 

 \medskip
\noindent
{\it Proof of the Claim in (\ref{InvMeas}):} 
This   claim  can be verified as in section 9 of \citep{MR912049};  
for completeness, we present now the details.   As shown in that paper  and in \citep{MR1873294}, 
it is enough to find two measures $\,{\bm \nu_1}\,$ and $\,{\bm \nu_2}\,$ on $\,(0, \infty)\,$ so that the appropriate form 
\begin{equation} 
\label{eq: BAR1}
\int_0^\infty \int_0^\infty \Big( \big(D_{gg}^{2} + D_{hh}^{2} - 2 D_{gh}^{2}\big)    + 2 \big( \delta_1 - \delta_2 \big) D_g + 2 \big( \delta_2 - \delta_3 \big) D_h   \Big) f(g,h)\, {\bm \pi}({\mathrm d} g ,  {\mathrm d} h) +~~~~~~~~~ 
\end{equation}
\[
~~~~~~~~~~~~~~~~~~~~{}+ \int_0^\infty \Big( D_{g} - \frac{1}{\, 2\, } D_{h}  \Big) f(0, h)\, {\bm \nu}_{1}({\mathrm d} h) +  \int_0^\infty \Big( D_{h} - \frac{3}{\, 2\, } D_{g}  \Big) f(g, 0) \, {\bm \nu}_{2}({\mathrm d} g) \, =\, 0   
\]
of the Basic Adjoint Relationship for the system of (\ref{B9}), (\ref{B10}),   holds  for every function $ f$ of class $C^2\big( (0, \infty)^2\big)\,$. Once again, selecting  $\, f(g,h) = \exp \big( - \alpha_1 g - \alpha_2 h\big)\,$ for $\, \alpha_1 \ge 0 \,$ and $\, \alpha_2 \ge 0\,$ and substituting in (\ref{eq: BAR1}), we obtain the equation
\begin{equation} 
\label{eq: Lap1}
\Big( (\alpha_{1} - \alpha_{2})^{2}  + 2 (\delta_{2}-\delta_{1}) \alpha_{1} + 2 (\delta_{3}-\delta_{2}) \alpha_{2}\Big) \,\widehat{{\bm \pi} }(\alpha_1, \alpha_2)\, =\, \Big( \alpha_{1} - \frac{\alpha_{2}}{\, 2\, } \Big) \,\widehat{{\bm \nu}}_{1}  (\alpha_{2}) + \Big( \alpha_{2} - \frac{3\,\alpha_{1}}{\, 2\, } \Big) \,\widehat{{\bm \nu}}_{2}  (\alpha_{1}) \,   ~~~
\end{equation}
linking the \textsc{Laplace} transforms of the measures $\, {\bm \pi}\,$ and $\,{\bm \nu_1}\,$, $\,{\bm \nu_2}\,$.

In accordance with the guess (\ref{InvMeas}) that we are trying to  establish, let us posit the product-form   
\begin{equation}
\label{prod_form}
{\bm \pi} \big( \dx g, \dx h \big) \,=\, p_1 (g) \, p_2 (h) \, \dx g \, \dx h \,, \qquad (g,h) \in (0, \infty)^2
\end{equation}
for the invariant distribution of the process of gaps; here $\, p_1 (\cdot)\,$ and $\, p_2 (\cdot)\,$ are probability density functions on the positive half-line. We denote by $\, \widehat{p}_1 (\cdot)\,$ and $\, \widehat{p}_2 (\cdot)\,$ the   \textsc{Laplace} transforms of these density functions,   set 
$\, c_j \,:=\, \lim_{\alpha \rightarrow \infty}\, \big( \alpha \,  \widehat{p}_j (\alpha) \big)\,$, $\, j=1, 2\,$, 
and note that (\ref{prod_form}) implies then $\, \widehat{{\bm \pi} }(\alpha_1, \alpha_2)= \widehat{p}_1 (\alpha_1) \,  \widehat{p}_2 (\alpha_2) \,$.  We divide now the resulting expression  (\ref{eq: Lap1}) by $\, \alpha_1 >0\,$ (respectively, by $\, \alpha_2 >0\,$), then send $\, \alpha_1  \,$ (respectively, by $\, \alpha_2  \,$) to infinity; the results are, respectively, 
$$
\, \widehat{{\bm \nu}}_{1}  (\alpha_{2})\,=\, c_1 \, \widehat{p}_2 (\alpha_2)\,, \qquad 
\widehat{{\bm \nu}}_{2}  (\alpha_{1})\,=\, c_2 \, \widehat{p}_1 (\alpha_1)\,.
$$
Substituting these expressions back into (\ref{eq: Lap1}) gives 
$$
\widehat{p}_1 (\alpha_1) \,  \widehat{p}_2 (\alpha_2)\, \big( (\alpha_{1} - \alpha_{2})^{2}  + 2 (\delta_{2}-\delta_{1}) \alpha_{1} + 2 (\delta_{3}-\delta_{2}) \alpha_{2}\big)\,=\,~~~~~~~~~~~~~~~~~~~~~~~~~~~~~~~~~~~~~~~~
$$
$$
~~~~~~~~~~~~~~~~~~~~~~~~~~~~~~~~~~~~~~~~~~~~~~
=\, c_1 \, \widehat{p}_2 (\alpha_2)\, \big( \alpha_{1} - (1 /2)  \alpha_{2}  \big) \,
+ c_2 \, \widehat{p}_1 (\alpha_1)\, \big( \alpha_{2} - (3 /2)  \alpha_{1}  \big) \,;
$$
whereas, setting $\, \alpha_2=0\,$ (respectively, $\, \alpha_1=0$) in this last equation, we obtain   
$$
c_1 \,=\, \widehat{p}_1 (\alpha_1) \, \big( \alpha_1 + 2 \big( \delta_2 - \delta_1 \big) + (3 /2) \, c_2 \big)\,, \qquad c_2 \,=\, \widehat{p}_2 (\alpha_2) \, \big( \alpha_2 + 2 \big( \delta_3 - \delta_2 \big) + (1 /2) \, c_1 \big) .
$$
On account of the rather obvious properties $\, \widehat{p}_1 (0) = \widehat{p}_2 (0) =1\,$, we obtain the system of equations
$$
c_1 \,=\,   2 \big( \delta_2 - \delta_1 \big)  + (3 /2) \, c_2\,\,, \qquad 
c_2 \,=\,   2 \big( \delta_3 - \delta_2 \big)  + (1 /2) \, c_1\,.
$$
 The solution to this system is now rather trivially $\, c_1 = 2\, \lambda_1\,$, $\, c_2 = 2\, \lambda_2\,$ in the notation of (\ref{La}); this leads to the transforms $\, \widehat{p}_j (\alpha) = ( 2 \lambda_j) / ( \alpha + 2 \lambda_j)\,$, $ \alpha \ge 0\,$, and thence to the   respective   densities and measures
\begin{equation}
\label{marginals}
 p_1 ( g) \,=\, 2 \,\lambda_1 \, e^{\, - 2\, \lambda_1\, g}\,, ~~~~g>0\,,    
 \qquad p_2 ( h) \,=\, 2 \,\lambda_2 \, e^{\, - 2\, \lambda_2\, h} \,, ~~~~h>0\,,
\end{equation}
\begin{equation}
\label{NUs}
 {\bm \nu}_{2}({\mathrm d} g) \,=\, 4 \,\lambda_2 \,\lambda_1  \, e^{\, - 2\, \lambda_1\, g}\, \dx g \,, %
  \qquad  {\bm \nu}_{1}({\mathrm d} h) \,=\, 4 \,\lambda_1 \,\lambda_2 \, e^{\, - 2\, \lambda_2\, h} \,  \dx h \,.
\end{equation}
  These   have total masses   $\, {\bm \nu}_{1}((0, \infty)) = 2\, \lambda_1\,$ and $\, {\bm \nu}_{2}((0, \infty)) = 2\, \lambda_2\,,$  just as in  (\ref{eq: bdry mass1}), (\ref{eq: bdry mass2}). For the  two   density functions of  (\ref{marginals}), the product measure   (\ref{prod_form}) satisfies the Basic Adjoint Relationship (\ref{eq: BAR1}), and is thus the invariant distribution of  the  two-dimensional process $ \big(G( \cdot), H(\cdot)\big)  $ of gaps.  
  \qed 
\end{appendix}

%
%

\section*{Acknowledgements}
We are grateful to Dr.\,E.\,Robert \textsc{Fernholz} for initiating this line of research, prompting us over the years to continue it, and providing   simulations for the paths of the processes involved. We are indebted to Drs.\,\,Jiro \textsc{Akahori},  David \textsc{Hobson}, Chris \textsc{Rogers}, Johannes \textsc{Ruf},  Mykhaylo \textsc{Shkolnikov},   Minghan \textsc{Yan},  and especially   Andrey \textsc{Sarantsev},   for   discussions about the problems treated here,  and for their advice and   suggestions. The referee read the paper with great diligence and care, corrected a serious error, and made several incisive comments and suggestions, for which we are deeply grateful. 

The first author was supported in part by  the National Science Foundation under grants DMS-1615229 and DMS-2008427. The second author was supported  by  the National Science Foundation under  grants NSF-DMS-0905754, NSF-DMS-1405210 and NSF-DMS-2004997. 


\begin{thebibliography}{51}

\bibitem{MR2553243}
\begin{bincollection}[author]
\bauthor{\bsnm{Akahori},~\bfnm{Jir\^{o}}\binits{J.}},
  \bauthor{\bsnm{Izumi},~\bfnm{Masaki}\binits{M.}} \AND
  \bauthor{\bsnm{Watanabe},~\bfnm{Shinzo}\binits{S.}}
(\byear{2009}).
\btitle{Noises, stochastic flows and {$E_0$}-semigroups}.
In \bbooktitle{Selected papers on probability and statistics}.
\bseries{Amer. Math. Soc. Transl. Ser. 2}
\bvolume{227}
\bpages{1--23}.
\bpublisher{Amer. Math. Soc., Providence, RI}.
\bdoi{10.1090/trans2/227/01}
\bmrnumber{2553243}
\end{bincollection}
\endbibitem

\bibitem{MR222955}
\begin{barticle}[author]
\bauthor{\bsnm{Az\'{e}ma},~\bfnm{J.}\binits{J.}},
  \bauthor{\bsnm{Kaplan-Duflo},~\bfnm{M.}\binits{M.}} \AND
  \bauthor{\bsnm{Revuz},~\bfnm{D.}\binits{D.}}
(\byear{1967}).
\btitle{Mesure invariante sur les classes r\'{e}currentes des processus de
  {M}arkov}.
\bjournal{Z. Wahrscheinlichkeitstheorie und Verw. Gebiete}
\bvolume{8}
\bpages{157--181}.
\bdoi{10.1007/BF00531519}
\bmrnumber{222955}
\end{barticle}
\endbibitem

\bibitem{MR2428716}
\begin{barticle}[author]
\bauthor{\bsnm{Banner},~\bfnm{Adrian~D.}\binits{A.D.}} \AND
  \bauthor{\bsnm{Ghomrasni},~\bfnm{Raouf}\binits{R.}}
(\byear{2008}).
\btitle{Local times of ranked continuous semimartingales}.
\bjournal{Stochastic Process. Appl.}
\bvolume{118}
\bpages{1244--1253}.
\bdoi{10.1016/j.spa.2007.08.001}
\bmrnumber{2428716}
\end{barticle}
\endbibitem

\bibitem{MR917679}
\begin{barticle}[author]
\bauthor{\bsnm{Bass},~\bfnm{R.~F.}\binits{R.F.}} \AND
  \bauthor{\bsnm{Pardoux},~\bfnm{\'{E}.}\binits{E.}}
(\byear{1987}).
\btitle{Uniqueness for diffusions with piecewise constant coefficients}.
\bjournal{Probab. Theory Related Fields}
\bvolume{76}
\bpages{557--572}.
\bdoi{10.1007/BF00960074}
\bmrnumber{917679}
\end{barticle}
\endbibitem

\bibitem{MR2830609}
\begin{barticle}[author]
\bauthor{\bsnm{Bramson},~\bfnm{Maury}\binits{M.}}
(\byear{2011}).
\btitle{A positive recurrent reflecting {B}rownian motion with divergent fluid
  path}.
\bjournal{Ann. Appl. Probab.}
\bvolume{21}
\bpages{951--986}.
\bdoi{10.1214/10-AAP713}
\bmrnumber{2830609}
\end{barticle}
\endbibitem

\bibitem{MR2650048}
\begin{barticle}[author]
\bauthor{\bsnm{Bramson},~\bfnm{Maury}\binits{M.}},
  \bauthor{\bsnm{Dai},~\bfnm{J.G.}\binits{J.G.}} \AND
  \bauthor{\bsnm{Harrison},~\bfnm{J.M.}\binits{J.M.}}
(\byear{2010}).
\btitle{Positive recurrence of reflecting {B}rownian motion in three
  dimensions}.
\bjournal{Ann. Appl. Probab.}
\bvolume{20}
\bpages{753--783}.
\bdoi{10.1214/09-AAP631}
\bmrnumber{2650048}
\end{barticle}
\endbibitem

\bibitem{MR3706753}
\begin{barticle}[author]
\bauthor{\bsnm{Bruggeman},~\bfnm{Cameron}\binits{C.}} \AND
  \bauthor{\bsnm{Sarantsev},~\bfnm{Andrey}\binits{A.}}
(\byear{2018}).
\btitle{Multiple collisions in systems of competing {B}rownian particles}.
\bjournal{Bernoulli}
\bvolume{24}
\bpages{156--201}.
\bdoi{10.3150/16-BEJ869}
\bmrnumber{3706753}
\end{barticle}
\endbibitem

\bibitem{MR3957396}
\begin{barticle}[author]
\bauthor{\bsnm{Cabezas},~\bfnm{M.}\binits{M.}},
  \bauthor{\bsnm{Dembo},~\bfnm{A.}\binits{A.}},
  \bauthor{\bsnm{Sarantsev},~\bfnm{A.}\binits{A.}} \AND
  \bauthor{\bsnm{Sidoravicius},~\bfnm{V.}\binits{V.}}
(\byear{2019}).
\btitle{Brownian particles with rank-dependent drifts: out-of-equilibrium
  behavior}.
\bjournal{Comm. Pure Appl. Math.}
\bvolume{72}
\bpages{1424--1458}.
\bdoi{10.1002/cpa.21825}
\bmrnumber{3957396}
\end{barticle}
\endbibitem

\bibitem{MR1978664}
\begin{barticle}[author]
\bauthor{\bsnm{Cherny\u{\i}},~\bfnm{A.S.}\binits{A.S.}}
(\byear{2001}).
\btitle{On strong and weak uniqueness for stochastic differential equations}.
\bjournal{Teor. Veroyatnost. i Primenen.}
\bvolume{46}
\bpages{483--497}.
\bdoi{10.1137/S0040585X97979093}
\bmrnumber{1978664}
\end{barticle}
\endbibitem

\bibitem{MR3503022}
\begin{barticle}[author]
\bauthor{\bsnm{Dembo},~\bfnm{Amir}\binits{A.}},
  \bauthor{\bsnm{Shkolnikov},~\bfnm{Mykhaylo}\binits{M.}},
  \bauthor{\bsnm{Varadhan},~\bfnm{S.~R.~Srinivasa}\binits{S.R.S.}} \AND
  \bauthor{\bsnm{Zeitouni},~\bfnm{Ofer}\binits{O.}}
(\byear{2016}).
\btitle{Large deviations for diffusions interacting through their ranks}.
\bjournal{Comm. Pure Appl. Math.}
\bvolume{69}
\bpages{1259--1313}.
\bdoi{10.1002/cpa.21640}
\bmrnumber{3503022}
\end{barticle}
\endbibitem

\bibitem{MR2499863}
\begin{barticle}[author]
\bauthor{\bsnm{Douc},~\bfnm{Randal}\binits{R.}},
  \bauthor{\bsnm{Fort},~\bfnm{Gersende}\binits{G.}} \AND
  \bauthor{\bsnm{Guillin},~\bfnm{Arnaud}\binits{A.}}
(\byear{2009}).
\btitle{Subgeometric rates of convergence of {$f$}-ergodic strong {M}arkov
  processes}.
\bjournal{Stochastic Process. Appl.}
\bvolume{119}
\bpages{897--923}.
\bdoi{10.1016/j.spa.2008.03.007}
\bmrnumber{2499863}
\end{barticle}
\endbibitem

\bibitem{MR1288127}
\begin{barticle}[author]
\bauthor{\bsnm{Dupuis},~\bfnm{Paul}\binits{P.}} \AND
  \bauthor{\bsnm{Williams},~\bfnm{Ruth~J.}\binits{R.J.}}
(\byear{1994}).
\btitle{Lyapunov functions for semimartingale reflecting {B}rownian motions}.
\bjournal{Ann. Probab.}
\bvolume{22}
\bpages{680--702}.
\bmrnumber{1288127}
\end{barticle}
\endbibitem

\bibitem{MR1128494}
\begin{barticle}[author]
\bauthor{\bsnm{Engelbert},~\bfnm{H.~J.}\binits{H.J.}}
(\byear{1991}).
\btitle{On the theorem of {T}. {Y}amada and {S}. {W}atanabe}.
\bjournal{Stochastics Stochastics Rep.}
\bvolume{36}
\bpages{205--216}.
\bdoi{10.1080/17442509108833718}
\bmrnumber{1128494}
\end{barticle}
\endbibitem

\bibitem{FernholzER10}
\begin{barticle}[author]
\bauthor{\bsnm{Fernholz},~\bfnm{E.~R.}\binits{E.R.}}
(\byear{2010}).
\btitle{A question regarding Brownian paths}.
\bjournal{Technical Report, \textsc{Intech} Investment Management. Princeton,
  New Jersey}.
\end{barticle}
\endbibitem

\bibitem{FernholzER11}
\begin{barticle}[author]
\bauthor{\bsnm{Fernholz},~\bfnm{E.R.}\binits{E.R.}}
(\byear{2011}).
\btitle{Time reversal for an $\,n=3\,$ degenerate system}.
\bjournal{Technical Report, \textsc{Intech} Investment Management. Princeton,
  New Jersey}.
\end{barticle}
\endbibitem

\bibitem{MR3062434}
\begin{barticle}[author]
\bauthor{\bsnm{Fernholz},~\bfnm{E.~Robert}\binits{E.R.}},
  \bauthor{\bsnm{Ichiba},~\bfnm{Tomoyuki}\binits{T.}} \AND
  \bauthor{\bsnm{Karatzas},~\bfnm{Ioannis}\binits{I.}}
(\byear{2013}).
\btitle{Two {B}rownian particles with rank-based characteristics and
  skew-elastic collisions}.
\bjournal{Stochastic Process. Appl.}
\bvolume{123}
\bpages{2999--3026}.
\bdoi{10.1016/j.spa.2013.03.019}
\bmrnumber{3062434}
\end{barticle}
\endbibitem

\bibitem{MR3055262}
\begin{barticle}[author]
\bauthor{\bsnm{Fernholz},~\bfnm{E.~Robert}\binits{E.R.}},
  \bauthor{\bsnm{Ichiba},~\bfnm{Tomoyuki}\binits{T.}},
  \bauthor{\bsnm{Karatzas},~\bfnm{Ioannis}\binits{I.}} \AND
  \bauthor{\bsnm{Prokaj},~\bfnm{Vilmos}\binits{V.}}
(\byear{2013}).
\btitle{Planar diffusions with rank-based characteristics and perturbed
  {T}anaka equations}.
\bjournal{Probab. Theory Related Fields}
\bvolume{156}
\bpages{343--374}.
\bdoi{10.1007/s00440-012-0430-7}
\bmrnumber{3055262}
\end{barticle}
\endbibitem

\bibitem{MR3157450}
\begin{bbook}[author]
\bauthor{\bsnm{Harrison},~\bfnm{J.~Michael}\binits{J.M.}}
(\byear{2013}).
\btitle{Brownian Models of Performance and Control}.
\bpublisher{Cambridge University Press, Cambridge}.
\bdoi{10.1017/CBO9781139087698}
\bmrnumber{3157450}
\end{bbook}
\endbibitem

\bibitem{MR606992}
\begin{barticle}[author]
\bauthor{\bsnm{Harrison},~\bfnm{J.~Michael}\binits{J.M.}} \AND
  \bauthor{\bsnm{Reiman},~\bfnm{Martin~I.}\binits{M.I.}}
(\byear{1981}).
\btitle{Reflected {B}rownian motion on an orthant}.
\bjournal{Ann. Probab.}
\bvolume{9}
\bpages{302--308}.
\bmrnumber{606992}
\end{barticle}
\endbibitem

\bibitem{MR912049}
\begin{barticle}[author]
\bauthor{\bsnm{Harrison},~\bfnm{J.~M.}\binits{J.M.}} \AND
  \bauthor{\bsnm{Williams},~\bfnm{R.~J.}\binits{R.J.}}
(\byear{1987}).
\btitle{Brownian models of open queueing networks with homogeneous customer
  populations}.
\bjournal{Stochastics}
\bvolume{22}
\bpages{77--115}.
\bdoi{10.1080/17442508708833469}
\bmrnumber{912049}
\end{barticle}
\endbibitem

\bibitem{MR877593}
\begin{barticle}[author]
\bauthor{\bsnm{Harrison},~\bfnm{J.~M.}\binits{J.M.}} \AND
  \bauthor{\bsnm{Williams},~\bfnm{R.~J.}\binits{R.J.}}
(\byear{1987}).
\btitle{Multidimensional reflected {B}rownian motions having exponential
  stationary distributions}.
\bjournal{Ann. Probab.}
\bvolume{15}
\bpages{115--137}.
\bmrnumber{877593}
\end{barticle}
\endbibitem

\bibitem{MR0133871}
\begin{barticle}[author]
\bauthor{\bsnm{Has'minski\u{\i}},~\bfnm{R.~Z.}\binits{R.Z.}}
(\byear{1960}).
\btitle{Ergodic properties of recurrent diffusion processes and stabilization
  of the solution of the {C}auchy problem for parabolic equations}.
\bjournal{Teor. Verojatnost. i Primenen.}
\bvolume{5}
\bpages{196--214}.
\bmrnumber{0133871}
\end{barticle}
\endbibitem

\bibitem{MR1198420}
\begin{barticle}[author]
\bauthor{\bsnm{Hobson},~\bfnm{D.~G.}\binits{D.G.}} \AND
  \bauthor{\bsnm{Rogers},~\bfnm{L.C.G.}\binits{L.C.G.}}
(\byear{1993}).
\btitle{Recurrence and transience of reflecting {B}rownian motion in the
  quadrant}.
\bjournal{Math. Proc. Cambridge Philos. Soc.}
\bvolume{113}
\bpages{387--399}.
\bdoi{10.1017/S0305004100076040}
\bmrnumber{1198420}
\end{barticle}
\endbibitem

\bibitem{MR2680554}
\begin{barticle}[author]
\bauthor{\bsnm{Ichiba},~\bfnm{Tomoyuki}\binits{T.}} \AND
  \bauthor{\bsnm{Karatzas},~\bfnm{Ioannis}\binits{I.}}
(\byear{2010}).
\btitle{On collisions of {B}rownian particles}.
\bjournal{Ann. Appl. Probab.}
\bvolume{20}
\bpages{951--977}.
\bdoi{10.1214/09-AAP641}
\bmrnumber{2680554}
\end{barticle}
\endbibitem

\bibitem{MR3332719}
\begin{bincollection}[author]
\bauthor{\bsnm{Ichiba},~\bfnm{Tomoyuki}\binits{T.}} \AND
  \bauthor{\bsnm{Karatzas},~\bfnm{Ioannis}\binits{I.}}
(\byear{2014}).
\btitle{Skew-unfolding the {S}korokhod reflection of a continuous
  semimartingale}.
In \bbooktitle{Stochastic {A}nalysis and {A}pplications 2014}.
\bseries{Springer Proc. Math. Stat.}
\bvolume{100}
\bpages{349--376}.
\bpublisher{Springer, Cham}.
\bdoi{10.1007/978-3-319-11292-3\_13}
\bmrnumber{3332719}
\end{bincollection}
\endbibitem

\bibitem{MR3795064}
\begin{barticle}[author]
\bauthor{\bsnm{Ichiba},~\bfnm{Tomoyuki}\binits{T.}},
  \bauthor{\bsnm{Karatzas},~\bfnm{Ioannis}\binits{I.}},
  \bauthor{\bsnm{Prokaj},~\bfnm{Vilmos}\binits{V.}} \AND
  \bauthor{\bsnm{Yan},~\bfnm{Minghan}\binits{M.}}
(\byear{2018}).
\btitle{Stochastic integral equations for {W}alsh semimartingales}.
\bjournal{Ann. Inst. Henri Poincar\'{e} Probab. Stat.}
\bvolume{54}
\bpages{726--756}.
\bdoi{10.1214/16-AIHP819}
\bmrnumber{3795064}
\end{barticle}
\endbibitem

\bibitem{MR3055258}
\begin{barticle}[author]
\bauthor{\bsnm{Ichiba},~\bfnm{Tomoyuki}\binits{T.}},
  \bauthor{\bsnm{Karatzas},~\bfnm{Ioannis}\binits{I.}} \AND
  \bauthor{\bsnm{Shkolnikov},~\bfnm{Mykhaylo}\binits{M.}}
(\byear{2013}).
\btitle{Strong solutions of stochastic equations with rank-based coefficients}.
\bjournal{Probab. Theory Related Fields}
\bvolume{156}
\bpages{229--248}.
\bdoi{10.1007/s00440-012-0426-3}
\bmrnumber{3055258}
\end{barticle}
\endbibitem

\bibitem{MR2807968}
\begin{barticle}[author]
\bauthor{\bsnm{Ichiba},~\bfnm{Tomoyuki}\binits{T.}},
  \bauthor{\bsnm{Papathanakos},~\bfnm{Vassilios}\binits{V.}},
  \bauthor{\bsnm{Banner},~\bfnm{Adrian}\binits{A.}},
  \bauthor{\bsnm{Karatzas},~\bfnm{Ioannis}\binits{I.}} \AND
  \bauthor{\bsnm{Fernholz},~\bfnm{Robert}\binits{R.}}
(\byear{2011}).
\btitle{Hybrid {A}tlas models}.
\bjournal{Ann. Appl. Probab.}
\bvolume{21}
\bpages{609--644}.
\bdoi{10.1214/10-AAP706}
\bmrnumber{2807968}
\end{barticle}
\endbibitem

\bibitem{MR3327514}
\begin{barticle}[author]
\bauthor{\bsnm{Jourdain},~\bfnm{Benjamin}\binits{B.}} \AND
  \bauthor{\bsnm{Reygner},~\bfnm{Julien}\binits{J.}}
(\byear{2013}).
\btitle{Propogation of chaos for rank-based interacting diffusions and long
  time behaviour of a scalar quasilinear parabolic equation}.
\bjournal{Stoch. Partial Differ. Equ. Anal. Comput.}
\bvolume{1}
\bpages{455--506}.
\bdoi{10.1007/s40072-013-0014-2}
\bmrnumber{3327514}
\end{barticle}
\endbibitem

\bibitem{MR3449305}
\begin{barticle}[author]
\bauthor{\bsnm{Karatzas},~\bfnm{Ioannis}\binits{I.}},
  \bauthor{\bsnm{Pal},~\bfnm{Soumik}\binits{S.}} \AND
  \bauthor{\bsnm{Shkolnikov},~\bfnm{Mykhaylo}\binits{M.}}
(\byear{2016}).
\btitle{Systems of {B}rownian particles with asymmetric collisions}.
\bjournal{Ann. Inst. Henri Poincar\'{e} Probab. Stat.}
\bvolume{52}
\bpages{323--354}.
\bdoi{10.1214/14-AIHP646}
\bmrnumber{3449305}
\end{barticle}
\endbibitem

\bibitem{MR1121940}
\begin{bbook}[author]
\bauthor{\bsnm{Karatzas},~\bfnm{Ioannis}\binits{I.}} \AND
  \bauthor{\bsnm{Shreve},~\bfnm{Steven~E.}\binits{S.E.}}
(\byear{1991}).
\btitle{Brownian {M}otion and {S}tochastic {C}alculus},
\bedition{second} ed.
\bseries{Graduate Texts in Mathematics}
\bvolume{113}.
\bpublisher{Springer-Verlag, New York}.
\bdoi{10.1007/978-1-4612-0949-2}
\bmrnumber{1121940}
\end{bbook}
\endbibitem

\bibitem{MR2931282}
\begin{barticle}[author]
\bauthor{\bsnm{Kella},~\bfnm{Offer}\binits{O.}} \AND
  \bauthor{\bsnm{Ramasubramanian},~\bfnm{S.}\binits{S.}}
(\byear{2012}).
\btitle{Asymptotic irrelevance of initial conditions for {S}korohod reflection
  mapping on the nonnegative orthant}.
\bjournal{Math. Oper. Res.}
\bvolume{37}
\bpages{301--312}.
\bdoi{10.1287/moor.1120.0538}
\bmrnumber{2931282}
\end{barticle}
\endbibitem

\bibitem{MR3773380}
\begin{barticle}[author]
\bauthor{\bsnm{Kolli},~\bfnm{Praveen}\binits{P.}} \AND
  \bauthor{\bsnm{Shkolnikov},~\bfnm{Mykhaylo}\binits{M.}}
(\byear{2018}).
\btitle{S{PDE} limit of the global fluctuations in rank-based models}.
\bjournal{Ann. Probab.}
\bvolume{46}
\bpages{1042--1069}.
\bdoi{10.1214/17-AOP1200}
\bmrnumber{3773380}
\end{barticle}
\endbibitem

\bibitem{MR2349573}
\begin{barticle}[author]
\bauthor{\bsnm{Kruk},~\bfnm{Lukasz}\binits{L.}},
  \bauthor{\bsnm{Lehoczky},~\bfnm{John}\binits{J.}},
  \bauthor{\bsnm{Ramanan},~\bfnm{Kavita}\binits{K.}} \AND
  \bauthor{\bsnm{Shreve},~\bfnm{Steven}\binits{S.E.}}
(\byear{2007}).
\btitle{An explicit formula for the {S}korokhod map on {$[0,a]$}}.
\bjournal{Ann. Probab.}
\bvolume{35}
\bpages{1740--1768}.
\bdoi{10.1214/009117906000000890}
\bmrnumber{2349573}
\end{barticle}
\endbibitem

\bibitem{MR1873294}
\begin{barticle}[author]
\bauthor{\bsnm{Kurtz},~\bfnm{Thomas~G.}\binits{T.G.}} \AND
  \bauthor{\bsnm{Stockbridge},~\bfnm{Richard~H.}\binits{R.H.}}
(\byear{2001}).
\btitle{Stationary solutions and forward equations for controlled and singular
  martingale problems}.
\bjournal{Electron. J. Probab.}
\bvolume{6}
\bpages{no. 17, 52}.
\bdoi{10.1214/EJP.v6-90}
\bmrnumber{1873294}
\end{barticle}
\endbibitem

\bibitem{MR2060298}
\begin{barticle}[author]
\bauthor{\bsnm{Le~Jan},~\bfnm{Yves}\binits{Y.}} \AND
  \bauthor{\bsnm{Raimond},~\bfnm{Olivier}\binits{O.}}
(\byear{2004}).
\btitle{Flows, coalescence and noise}.
\bjournal{Ann. Probab.}
\bvolume{32}
\bpages{1247--1315}.
\bdoi{10.1214/009117904000000207}
\bmrnumber{2060298}
\end{barticle}
\endbibitem

\bibitem{MR2052863}
\begin{barticle}[author]
\bauthor{\bsnm{Le~Jan},~\bfnm{Yves}\binits{Y.}} \AND
  \bauthor{\bsnm{Raimond},~\bfnm{Olivier}\binits{O.}}
(\byear{2004}).
\btitle{Sticky flows on the circle and their noises}.
\bjournal{Probab. Theory Related Fields}
\bvolume{129}
\bpages{63--82}.
\bdoi{10.1007/s00440-003-0324-9}
\bmrnumber{2052863}
\end{barticle}
\endbibitem

\bibitem{MR1234295}
\begin{barticle}[author]
\bauthor{\bsnm{Meyn},~\bfnm{Sean~P.}\binits{S.P.}} \AND
  \bauthor{\bsnm{Tweedie},~\bfnm{R.~L.}\binits{R.L.}}
(\byear{1993}).
\btitle{Stability of {M}arkovian processes. II: Continuous-time processes and
  sampled chains; and {III}. {F}oster-{L}yapunov criteria for continuous-time
  processes}.
\bjournal{Adv. in Appl. Probab.}
\bvolume{25}
\bpages{487--517, 518--548}.
\bdoi{10.2307/1427522}
\bmrnumber{1234295}
\end{barticle}
\endbibitem

\bibitem{MR2473654}
\begin{barticle}[author]
\bauthor{\bsnm{Pal},~\bfnm{Soumik}\binits{S.}} \AND
  \bauthor{\bsnm{Pitman},~\bfnm{Jim}\binits{J.}}
(\byear{2008}).
\btitle{One-dimensional {B}rownian particle systems with rank-dependent
  drifts}.
\bjournal{Ann. Appl. Probab.}
\bvolume{18}
\bpages{2179--2207}.
\bdoi{10.1214/08-AAP516}
\bmrnumber{2473654}
\end{barticle}
\endbibitem


\bibitem{MR2494646}
\begin{barticle}[author]
\bauthor{\bsnm{Prokaj},~\bfnm{Vilmos}\binits{V.}}
(\byear{2009}).
\btitle{Unfolding the {S}korohod reflection of a semimartingale}.
\bjournal{Statist. Probab. Lett.}
\bvolume{79}
\bpages{534--536}.
\bdoi{10.1016/j.spl.2008.09.029}
\bmrnumber{2494646}
\end{barticle}
\endbibitem

\bibitem{MR921820}
\begin{barticle}[author]
\bauthor{\bsnm{Reiman},~\bfnm{M.~I.}\binits{M.I.}} \AND
  \bauthor{\bsnm{Williams},~\bfnm{R.~J.}\binits{R.J.}}
(\byear{1988}).
\btitle{A boundary property of semimartingale reflecting {B}rownian motions}.
\bjournal{Probab. Theory Related Fields}
\bvolume{77}
\bpages{87--97}.
\bdoi{10.1007/BF01848132}
\bmrnumber{921820}
\end{barticle}
\endbibitem

\bibitem{MR1725357}
\begin{bbook}[author]
\bauthor{\bsnm{Revuz},~\bfnm{Daniel}\binits{D.}} \AND
  \bauthor{\bsnm{Yor},~\bfnm{Marc}\binits{M.}}
(\byear{1999}).
\btitle{Continuous {M}artingales and {B}rownian {M}otion},
\bedition{third} ed.
\bseries{Grundlehren der Mathematischen Wissenschaften [Fundamental Principles
  of Mathematical Sciences]}
\bvolume{293}.
\bpublisher{Springer-Verlag, Berlin}.
\bdoi{10.1007/978-3-662-06400-9}
\bmrnumber{1725357}
\end{bbook}
\endbibitem

\bibitem{MR998631}
\begin{barticle}[author]
\bauthor{\bsnm{Rogers},~\bfnm{L.~C.~G.}\binits{L.C.G.}}
(\byear{1989}).
\btitle{A guided tour through excursions}.
\bjournal{Bull. London Math. Soc.}
\bvolume{21}
\bpages{305--341}.
\bdoi{10.1112/blms/21.4.305}
\bmrnumber{998631}
\end{barticle}
\endbibitem

\bibitem{MR3325099}
\begin{barticle}[author]
\bauthor{\bsnm{Sarantsev},~\bfnm{Andrey}\binits{A.}}
(\byear{2015}).
\btitle{Triple and simultaneous collisions of competing {B}rownian particles}.
\bjournal{Electron. J. Probab.}
\bvolume{20}
\bpages{no. 29, 28}.
\bdoi{10.1214/EJP.v20-3279}
\bmrnumber{3325099}
\end{barticle}
\endbibitem

\bibitem{MR3687255}
\begin{barticle}[author]
\bauthor{\bsnm{Sarantsev},~\bfnm{Andrey}\binits{A.}}
(\byear{2017}).
\btitle{Reflected {B}rownian motion in a convex polyhedral cone: tail estimates
  for the stationary distribution}.
\bjournal{J. Theoret. Probab.}
\bvolume{30}
\bpages{1200--1223}.
\bdoi{10.1007/s10959-016-0674-8}
\bmrnumber{3687255}
\end{barticle}
\endbibitem

\bibitem{MR532498}
\begin{bbook}[author]
\bauthor{\bsnm{Stroock},~\bfnm{Daniel~W.}\binits{D.W.}} \AND
  \bauthor{\bsnm{Varadhan},~\bfnm{S.~R.~Srinivasa}\binits{S.R.S.}}
(\byear{1979}).
\btitle{Multidimensional {D}iffusion {P}rocesses}.
\bseries{Grundlehren der Mathematischen Wissenschaften [Fundamental Principles
  of Mathematical Sciences]}
\bvolume{233}.
\bpublisher{Springer-Verlag, Berlin-New York}.
\bmrnumber{532498}
\end{bbook}
\endbibitem

\bibitem{MR1487755}
\begin{barticle}[author]
\bauthor{\bsnm{Tsirel'son},~\bfnm{B.}\binits{B.}}
(\byear{1997}).
\btitle{Triple points: from non-{B}rownian filtrations to harmonic measures}.
\bjournal{Geom. Funct. Anal.}
\bvolume{7}
\bpages{1096--1142}.
\bdoi{10.1007/s000390050038}
\bmrnumber{1487755}
\end{barticle}
\endbibitem

\bibitem{MR792398}
\begin{barticle}[author]
\bauthor{\bsnm{Varadhan},~\bfnm{S.~R.~S.}\binits{S.R.S.}} \AND
  \bauthor{\bsnm{Williams},~\bfnm{R.J.}\binits{R.J.}}
(\byear{1985}).
\btitle{Brownian motion in a wedge with oblique reflection}.
\bjournal{Comm. Pure Appl. Math.}
\bvolume{38}
\bpages{405--443}.
\bdoi{10.1002/cpa.3160380405}
\bmrnumber{792398}
\end{barticle}
\endbibitem

\bibitem{MR1943894}
\begin{barticle}[author]
\bauthor{\bsnm{Warren},~\bfnm{Jon}\binits{J.}}
(\byear{2002}).
\btitle{The noise made by a {P}oisson snake}.
\bjournal{Electron. J. Probab.}
\bvolume{7}
\bpages{no. 21, 21}.
\bdoi{10.1214/EJP.v7-120}
\bmrnumber{1943894}
\end{barticle}
\endbibitem

\bibitem{MR1816931}
\begin{barticle}[author]
\bauthor{\bsnm{Watanabe},~\bfnm{S.}\binits{S.}}
(\byear{2000}).
\btitle{The stochastic flow and the noise associated to {T}anaka's stochastic
  differential equation}.
\bjournal{Ukra\"{\i}n. Mat. Zh.}
\bvolume{52}
\bpages{1176--1193}.
\bdoi{10.1023/A:1010319800544}
\bmrnumber{1816931}
\end{barticle}
\endbibitem

\bibitem{MR890921}
\begin{barticle}[author]
\bauthor{\bsnm{Williams},~\bfnm{R.~J.}\binits{R.J.}}
(\byear{1987}).
\btitle{Local time and excursions of reflected {B}rownian motion in a wedge}.
\bjournal{Publ. Res. Inst. Math. Sci.}
\bvolume{23}
\bpages{297--319}.
\bdoi{10.2977/prims/1195176542}
\bmrnumber{890921}
\end{barticle}
\endbibitem

\bibitem{MR1381009}
\begin{bincollection}[author]
\bauthor{\bsnm{Williams},~\bfnm{R.~J.}\binits{R.J.}}
(\byear{1995}).
\btitle{Semimartingale reflecting {B}rownian motions in the orthant}.
In \bbooktitle{Stochastic networks}.
\bseries{IMA Vol. Math. Appl.}
\bvolume{71}
\bpages{125--137}.
\bpublisher{Springer, New York}.
\bdoi{10.1007/978-1-4757-2418-9\_7}
\bmrnumber{1381009}
\end{bincollection}
\endbibitem

\end{thebibliography}
\end{document}